\documentclass[11pt,reqno]{amsart}

\usepackage[a4paper,top=25mm,bottom=29mm,left=28mm,right=28mm]{geometry}
\usepackage[T1]{fontenc}
\usepackage{lmodern}
\usepackage{amsmath,amssymb,amsfonts,amsthm,mathtools,bm,mathrsfs}
\usepackage{float,graphicx,subcaption,placeins,adjustbox}
\usepackage{booktabs,tabularx,array,longtable,multirow}
\usepackage{enumerate,enumitem,xcolor,microtype,siunitx,url}

\usepackage[hidelinks]{hyperref}

\usepackage[nameinlink,capitalise,noabbrev]{cleveref}
\graphicspath{{figures/}}

\theoremstyle{plain}
\newtheorem{theorem}{Theorem}[section]
\newtheorem*{theorem*}{Theorem}

\newtheorem{lemma}[theorem]{Lemma}
\newtheorem*{claim*}{Claim}
\newtheorem{proposition}[theorem]{Proposition}

\theoremstyle{definition}

\newtheorem{example}[theorem]{Example}
\theoremstyle{remark}

\newtheorem{remark}[theorem]{Remark}
\numberwithin{equation}{section}

\newcommand{\Pent}{\mathbb{P}}
\newcommand{\SwitchAlg}{\mathcal A}
\newcommand{\Vpure}{V}

\newcommand{\Imag}{\operatorname{Im}}

\newcommand{\Span}{\operatorname{span}}

\newcommand{\R}{\mathbb{R}}

\newcommand{\C}{\mathbb{C}}

\newcommand{\Tr}{\mathrm{Tr}}

\newcommand{\diag}{\operatorname{diag}}

\newcommand{\kg}{\kappa_\gamma}
\newcommand{\sg}{\sigma_\gamma}

\newcommand{\sn}{\operatorname{sn}}
\newcommand{\cn}{\operatorname{cn}}
\newcommand{\dn}{\operatorname{dn}}

\newcommand{\dd}{\,\mathrm d}

\begin{document}
\raggedbottom

\title[From Quaternions to Pentarnions II:
quadratic differential equations]{
From Quaternions to Pentarnions II:
quadratic differential equations}

\author{Alireza Poshtkohi}
\address{School of Physics, Engineering and Computer Science, University of Hertfordshire, Hatfield, Hertfordshire, United Kingdom}
\email{a.poshtkohi@herts.ac.uk}

\author{Susanne Pumpl\"un}
\address{School of Mathematical Sciences, University of Nottingham,
Nottingham NG7 2RD, United Kingdom}
\email{Susanne.Pumpluen@nottingham.ac.uk}

\keywords{
Nonassociative algebras, Pentarnion algebras, algebra-valued differential equations, quadratic differential equations, Jacobi elliptic solutions, invariant algebraic curves, Riccati equations, bifurcation theory, periodically varying parameters.}

\subjclass[2020]{Primary: 34A34; Secondary: 34C23, 34C25, 37G15, 17A45}

\date{\today}

\begin{abstract}We investigate differential equations defined through elements of a nonassociative five-dimensional real
algebra $\mathcal A_\gamma$, and show that every system of quadratic differential equations $\dot p=F(p)$ on $\mathbb R^5$ can be written as a differential equation over such a non-flexible quadratic algebra.
We then study  quadratic differential equations and their solutions over the whole family $\{\mathcal A_\gamma\}_{0\leq\gamma\leq1}$ of algebras while varying $\gamma$ across the parameter interval $[0,1]$, exploiting the fact that by definition, $\mathcal A_0$ contains Hamilton's quaternion algebra and that $\mathcal A_1=\mathbb{P}$ is the real Pentarnion algebra.

We construct explicit Jacobi elliptic solutions that
deform a quaternion-valued solution at $\gamma=0$ into a Pentarnion-valued one at $\gamma=1$, and an algebra-valued Riccati equation whose equilibrium $p=0$ loses stability at a critical value of $\gamma$. Numerical experiments indicate period doubling, coexisting stable periodic orbits and irregular dynamics.
\end{abstract}

\maketitle

\section*{Introduction}

Inspired by fundamental work  connecting nonassociative algebras with quadratic differential equations \cite{Markus1960, KinyonSagle1995,Walcher1991}, quaternion-valued differential
equations have been studied extensively, e.g. see
\cite{CamposMawhin2006,Wilczynski2009}. Most notably, every polynomial differential equation in $\R^4$ can be rewritten as a differential equation over Hamilton's quaternion algebra $\mathbb H$, and (most) quaternionic differential equations with coefficients in $\C$ behave very similarly to differential equations over $\C$ \cite{WalcherZhang2021}. 

Hamilton's quaternion algebra plays an important role in many areas of applied mathematics and physics.
From an algebraic point of view, Hamilton's quaternions are a non-commutative generalisation of the complex numbers (and together with $\R,$ both these algebras are the only finite dimensional real associative division algebras). Both are locally complex and quadratic real unital algebras.

It seems only  natural to further inquire about particular features of ordinary differential equations over other such locally complex and quadratic real $n$-dimensional algebras, viewed as $n$-dimensional real systems. 

Recently, such an algebra, the five-dimensional real Pentarnion algebra
$\Pent$, was used to study some $5$-dimensional real systems of ordinary differential equations defined via elements in  $\Pent$ \cite{PoshtkohiPumplunSubmitted}. In this paper, we will generalise our approach and investigate some quadratic differential equations over 
 a family of five-dimensional locally complex quadratic algebras $\mathcal{A}_\gamma$, $\gamma\in [0,1]$, defined for the first time in  \cite{PoshtkohiPumpluen2026a}.
 Every algebra  $\mathcal{A}_\gamma$
 has a canonical basis $1,i,j,k,\ell$; $\mathcal{A}_0$
contains Hamilton's quaternion algebra $\mathbb{H}$ as the subalgebra generated by $1,i,j,k$, while
$\mathcal{A}_1$ is the \emph{Pentarnion algebra} $\Pent$ from \cite{PoshtkohiPumplunSubmitted}.

Identifying $\mathbb R^5$ with $\mathcal A_\gamma$ via the canonical basis $1,i,j,k,\ell$, every quadratic differential equations on $\mathbb R^5$ can be written in terms of its multiplication $\star_\gamma$. Since $\mathcal A_\gamma$ is neither commutative nor associative, we have to take great care when talking about a ``polynomial'' over $\mathcal A_\gamma$, and the properties of what we could consider or define as polynomials differ vastly from the familiar commutative setting.

We recall the definition of algebras $\mathcal A_\gamma$  and present some first results in the preliminaries in \cref{sec:prelim}. Most importantly, we show that every
system of quadratic differential equations $\dot p=F(p)$ on $\mathbb R^5$ can be written as a differential equation over $\mathcal A_\gamma$
(Proposition \ref{prop:every-quadratic}). In general, such a differential equation over $\mathcal A_\gamma$  will have  many terms.

Since only the multiplication $\star_\gamma$ and the coefficients depend on $\gamma$, an equation over $\mathcal A_\gamma$ defines a one-parameter family of quadratic systems on $\mathbb R^5$. In the families studied here, the vector field preserves $\mathbb H\subset\mathcal A_0$ at $\gamma=0$ and the initial values lie in $\mathbb H$, so the
solutions are quaternion-valued; we study how they change as $\gamma$ moves to the Pentarnion algebra at $\gamma=1$.

 In \Cref{sec:exact-integrable-path}, we
present a family of polynomial differential equations together with explicit solutions in terms of Jacobi elliptic functions,
and describe how they depend on $\gamma$ (Proposition \ref{thm:flagship-orbit}, Theorem \ref{thm:integrable-geometric-transition}).
\cref{sec:riccati-dynamics} studies an algebra-valued Riccati equation with an analytic instability threshold  (\cref{thm:riccati-hopf})  and numerically observed periodic branches, period doubling, multistability and irregular regimes, for fixed $\gamma$ and for two periodic protocols $\gamma(t)$.

Results that rely on computer algebra are marked as such; the code is described in
Appendix~\ref{app:reproducibility}. Appendices~\ref{app:coordinate-matrices} and~\ref{app:numerics} give coordinate formulas and numerical
methods.

\section{Basic definitions, conventions and first results} \label{sec:prelim}

\subsection{The algebras $\mathcal{A}_\gamma$} Let us summarize the main results from \cite{PoshtkohiPumpluen2026a}; a more detailed exposition can be found in \cite[Sections 2 -- 4]{PoshtkohiPumpluen2026a}.
Let  $\gamma\in \R$ with $0\leq\gamma\leq1$, and define
\[
 \kappa=\kg=\frac{1}{\sqrt{1+\gamma^2}},
 \quad
 \sigma=\sg=\frac{\gamma}{\sqrt{1+\gamma^2}}.
\]
We note that $\kg^2+\sg^2=1$.

We define a five-dimensional real unital algebra $\mathcal{A}_\gamma=\mathcal{A}_\gamma(1,1,1,1)$
with \emph{canonical basis $1,i,j,k,\ell$} via the following Table~\ref{tab:mult} from \cite{PoshtkohiPumpluen2026a}. Multiplication in $\mathcal{A}_\gamma$ is denoted by  $\star_\gamma$,  or simply by juxtaposition when the context is clear.

\begin{table}[ht]
\centering
\renewcommand{\arraystretch}{1.3}
\begin{tabular}{c|cccc}
$\star_\gamma$ & $i$ & $j$ & $k$ & $\ell$\\ \hline
$i$ & $-1$ & $\kappa_\gamma k+\sigma_\gamma\ell$ & $-\kappa_\gamma j-\sigma_\gamma\ell$ & $-\tfrac{1}{\sqrt2}(j+k)$\\
$j$ & $-\kappa_\gamma k-\sigma_\gamma\ell$ & $-1$ & $\kappa_\gamma i+\sigma_\gamma\ell$ & $-\tfrac{1}{\sqrt2}(i+k)$\\
$k$ & $\kappa_\gamma j+\sigma_\gamma\ell$ & $-\kappa_\gamma i-\sigma_\gamma\ell$ & $-1$ & $-\tfrac{1}{\sqrt2}(i+j)$\\
$\ell$ & $\tfrac{1}{\sqrt2}(j+k)$ & $\tfrac{1}{\sqrt2}(i+k)$ & $\tfrac{1}{\sqrt2}(i+j)$ & $-1$
\end{tabular}
\caption{Multiplication in $\mathcal A_\gamma$; the entry in row $x$ and column $y$ is $x\star_\gamma y$.}
\label{tab:mult}
\end{table}

At $\gamma=0$ the subalgebra of $\mathcal{A}_0$ generated by
$1,i,j,k$ is the quaternion algebra $\mathbb H=(-1,-1)_\R$ with canonical basis $1,i,j,k$, and $\mathcal{A}_1=\mathbb P$ is the Pentarnion algebra. 

Every $p\in \mathcal{A}_\gamma $   can be written as
$$p=p_0+p_1i+p_2j+p_3k+p_4\ell,$$
for $p_0,p_1,p_2,p_3,p_4\in \R;$
 $p_0$ is called the \emph{scalar part}, and $ \Imag(p)=p_1i+p_2j+p_3k+p_4\ell
$ the \emph{pure part} of $p$. The map
\[
  p=p_0+p_1i+p_2j+p_3k+p_4\ell \mapsto  \overline{p}=p_0-p_1i-p_2j-p_3k-p_4\ell,
\]
is a scalar involution on $\mathcal{A}_\gamma$.
There exists a quadratic form  $N_{\mathcal{A}_\gamma}:\mathcal{A}_\gamma \to \R$,
\[
  N_{\mathcal{A}_\gamma }(p)=p\overline{p}=p_0^2+p_1^2+p_2^2+p_3^2+p_4^2,
\]
 the  \emph{norm} of $\mathcal{A}_\gamma$, and a linear form  $\Tr_{\mathcal{A}_\gamma}:\mathcal{A}_\gamma \to \R$,
\[
  \Tr_{\mathcal{A}_\gamma}(p)=p+\overline{p}=2p_0.
\]
 the  \emph{trace}  of $\mathcal{A}_\gamma$. We note that $N_{\mathcal{A}_\gamma}$ is isometric to the diagonal quadratic form $\langle 1,1,1,1,1\rangle$ 
   and that $N_{\mathcal{A}_\gamma}(p)=p\overline{p}=\overline{p}p$ \cite[p.~86]{McCrimmon1985}. 

The algebra $\mathcal{A}_\gamma$ is a unital quadratic algebra; every $p\in \mathcal{A}_\gamma$ satisfies the quadratic equation
$$ p^2-\Tr(p)p+N(p)1=0.$$
 This also means that $\mathcal{A}_\gamma$ is power associative.
The algebra $\mathcal{A}_\gamma$ is not flexible, however, since we have the associator 
\[
 [i+j,\ell,i+j]
 =2\left(\sqrt{\frac{1}{2}}+
          \sqrt{\frac{1}{2}}\right)1\neq0.
\]
Since every alternative algebra is flexible,
$\mathcal{A}_\gamma$ is not alternative either.

The algebras $\SwitchAlg_{\gamma}$ are pairwise non-isomorphic for different values of $\gamma$, more precisely, for all $\gamma,\delta\geq 0$, we have $\SwitchAlg_{\gamma}\cong \SwitchAlg_{\delta}$ if and only if $\gamma=\delta$ \cite[Theorem 4.4]{PoshtkohiPumpluen2026a}.

Since $\mathcal{A}_\gamma $ is a quadratic algebra, we have
 $\mathcal{A}_\gamma =\R 1\oplus V_\gamma $ with the pure part
\[
V_\gamma 
 =\{u\in\mathcal{A}_\gamma \mid \Tr(u)=0\}
 =\Span_\R\{i,j,k,\ell\}.
\]
Define
\[
\begin{aligned}
\times_\gamma:\;&V_\gamma \times V_\gamma 
   \longrightarrow V_\gamma ,\\
&u\times_\gamma v=\operatorname{pr}(uv),
\end{aligned}
\]
where $\operatorname{pr}:\mathcal{A}_\gamma\to V_\gamma$ is the canonical projection.  Then
$(V_\gamma ,\times_\gamma)$ is a four-dimensional anti-commutative algebra over $\R$, and
\[
uv=-\frac{1}{2}N_{\mathcal{A}_\gamma }(u,v)+u\times_\gamma v
\]
for all $u,v\in V_\gamma $.
Here,  $\frac{1}{2}N_{\mathcal{A}_\gamma }(u,v)$ denotes the symmetric bilinear form  associated to $N_{\mathcal{A}_\gamma }$, and
  $\times_\gamma $ is anti-symmetric. The multiplication of $\mathcal{A}_\gamma =\R\oplus V_\gamma $ is given by
 $$(\alpha,u)(\beta,v)=(\alpha\beta-\frac{1}{2}N_{\mathcal{A}_\gamma }(u,v),\alpha v+\beta u+u\times_\gamma v) $$
 for all $\alpha,\beta\in \R$ and $u,v\in V_\gamma $.
Note that
\[
  uv+vu=-N_{\mathcal{A}_\gamma }(u,v) \text{ and }   uv-vu=2(u \times_\gamma v).
\]
 We denote $V_\gamma={\rm span}_\R\{i,j,k,\ell\}$, $\langle u,v\rangle=\frac{1}{2}N(u,v)$ and  write $u\times v=u\times_\gamma v={\rm pr}(uv)$ when the context is clear. We also write $N=N_{\mathcal{A}_\gamma }$.

Henceforward, we will view the family $\{\mathcal{A}_\gamma\,|\, \gamma \in [0,1]\}$ as a family of algebras that are connected via \emph{switching} the parameter $\gamma$, which can also be viewed as a variable that depends on the time $t$.

\subsection{Systems of quadratic differential equations on $\mathbb R^5$}
We first show that the multiplication algebra of $\mathcal A_\gamma$ is as large as possible.
 This means that every
system of quadratic differential equations $\dot p=F(p)$ on $\mathbb R^5$ can be written as a differential
equation over $\mathcal A_\gamma$, whenenver $F:\mathcal A_\gamma\to\mathcal A_\gamma$ is a map whose components in the basis $1,i,j,k,\ell$ are real polynomials of
degree at most two in $p_0,\dots,p_4$ .

\begin{proposition}\label{prop:mult-algebra}
For every $\gamma\in\mathbb R$, $0\leq\gamma\leq1$, the multiplication algebra of $\mathcal A_\gamma$, i.e. the subalgebra of
${\rm End}_{\mathbb R}(\mathcal A_\gamma)$ generated by all $L^\gamma_a$ and $R^\gamma_a$, $a\in\mathcal A_\gamma$, is all of
${\rm End}_{\mathbb R}(\mathcal A_\gamma)$.
\end{proposition}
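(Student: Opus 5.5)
Since $\mathcal A_\gamma$ is unital, the multiplication algebra $M$ contains $L_1=\mathrm{id}$. I would show that $M$ acts irreducibly on $\mathcal A_\gamma$ and has commutant $\R$. Burnside's theorem (Wedderburn) then gives $M=\mathrm{End}_\R(\mathcal A_\gamma)$, since a real irreducible algebra with centralizer $\R$ is the full matrix algebra. Concretely, I would argue as follows. A subspace $W$ invariant under all $L_a$ and $R_a$ is a two-sided ideal, and the commutant consists of the linear maps commuting with every $L_a,R_a$. So there are two steps: (i) $\mathcal A_\gamma$ is simple, and (ii) every $T$ commuting with all $L_a,R_a$ is a scalar multiple of $\mathrm{id}$.

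For (ii), $T$ commutes with $R_a$, so $T(a)=T(1\cdot a)=T(R_a 1)=R_a T(1)=T(1)a$, i.e.\ $T=L_c$ with $c=T(1)$. Commutation with all $R_b$ then says $c(ab)=(ca)b$ for all $a,b$. Commutation with $L_a$ says $c(ab)=a(cb)$, and taking $b=1$ gives $ca=ac$. Hence $c$ lies in the nucleus intersected with the centre. Write $c=c_0+u$ with $u\in V_\gamma$. From $uv-vu=2u\times v$, commutativity forces $u\times v=0$ for all $v\in V_\gamma$. Table~\ref{tab:mult} then shows $u=0$: already $u\times \ell$ picks up the $\ell$-row and $\ell$-column entries, and the $\frac1{\sqrt2}$ terms separate the coefficients. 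So $c\in\R$.

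For (i), let $I\ne0$ be an ideal and pick $0\neq x\in I$. If $x$ is not a real multiple of $1$, write $x=x_0+u$. Then $x^2-\Tr(x)x+N(x)1=0$ lies in $I$, and using $\bar x=\Tr(x)-x\in I+\R1$ one quickly gets $N(x)1\in I$, hence $1\in I$ and $I=\mathcal A_\gamma$. To make this rigorous, note $\bar x\in I$ would need $1\in I$, so I would argue differently: for $u\in V_\gamma\cap(I+\R)$, the product $u\cdot u=-N(u)$ lies in $I$. Instead I would use $x\cdot x-(\Tr x) x=-N(x)1\in I$, which holds because $x\in I$ implies $x\cdot x\in I$ and $(\Tr x)x\in I$. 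Thus $N(x)>0$ gives $1\in I$. So every nonzero ideal is everything and irreducibility follows.

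With irreducibility established, Schur gives a commutant that is a real division algebra, and (ii) shows it is $\R$. Jacobson's density theorem in finite dimension then yields $M=\mathrm{End}_\R(\mathcal A_\gamma)$. The main obstacle is the small table check in (ii) that $u\times_\gamma v=0$ for all $v$ forces $u=0$, uniformly in $\gamma\in[0,1]$. I would do it with $v=\ell$ and $v=i$: the coefficient equations in the $i,j,k$ components give a linear system of full rank, because the $\ell$-row entries have $\frac1{\sqrt2}$ independent of $\gamma$. The computation in (i) is notably clean because anisotropy of $N$ does all the work.
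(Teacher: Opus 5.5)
Your proof is correct and follows essentially the same route as the paper: irreducibility comes from $-N(x)1=x^2-\Tr(x)x\in I$ with $N$ anisotropic. This is the paper's identity $R_{\bar w}w=w\bar w=N(w)1$, which also shows that your abandoned detour about whether $\bar x\in I$ was unnecessary. The commutant is trivial because $c=C(1)$ is central and the $\ell$-column of Table~\ref{tab:mult} forces its pure part to vanish, and Jacobson density then finishes; the only difference is cosmetic (you first identify $T=L_c$ before deducing $ac=ca$, while the paper reads off $ac=C(a)=ca$ directly).
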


\begin{proof}
Let $\mathcal M$ denote the multiplication algebra. We first show that
$\mathcal A_\gamma$ is an irreducible $\mathcal M$-module. For this, let $W\neq0$ be a subspace of $\mathcal A_\gamma$, such that
$L^\gamma_aW\subseteq W$ and $R^\gamma_aW\subseteq W$ for all $a\in \mathcal A_\gamma$, and let $0\neq w\in W$. Then
$N(w)1=w\star_\gamma\bar w=R^\gamma_{\bar w}w\in W$ with $N(w)>0$, so $1\in W$, and hence $a=L^\gamma_a1\in W$ for
all $a$. Thus $W=\mathcal A_\gamma$.

 Next, let $C\in {\rm End}_{\mathbb R}(\mathcal A_\gamma)$ such that $C\circ L^\gamma_a=L^\gamma_a\circ C$ and $C\circ  R^\gamma_a = R^\gamma_a \circ  C$. Put $c=C(1)$. Then
$a\star_\gamma c=L^\gamma_aC(1)=C(a)=R^\gamma_aC(1)=c\star_\gamma a$ for all $a$, so the pure part $u=u_1i+u_2j+u_3k+u_4\ell$
of $c$ satisfies $u\times_\gamma v=0$ for all $v\in V$. By Table~\ref{tab:mult},
\[
u\times_\gamma\ell=-\tfrac1{\sqrt2}\bigl((u_2+u_3)i+(u_1+u_3)j+(u_1+u_2)k\bigr),
\]
so $u_1=u_2=u_3=0$, and then $u\times_\gamma i=u_4\,\tfrac1{\sqrt2}(j+k)$ gives $u_4=0$. Hence $c\in\mathbb R1$ and
$C(a)=a\star_\gamma c=c\,a$, i.e.\ $C=c\,\mathrm{id}$.

By the Jacobson Density Theorem~\cite{Jacobson1945}, every subalgebra of endomorphisms of a finite-dimensional real vector space
that acts irreducibly and has commutant $\mathbb R\,\mathrm{id}$ is the full endomorphism algebra. Hence $\mathcal M=\operatorname{End}_{\mathbb R}(\mathcal A_\gamma)$.
\end{proof}

\begin{proposition}\label{prop:every-quadratic}
Let $0\leq\gamma\leq1$ and identify $\mathbb R^5$ with $\mathcal A_\gamma$ via the basis $1,i,j,k,\ell$.
Every map $F:\mathcal A_\gamma\to\mathcal A_\gamma$ whose components in the basis $1,i,j,k,\ell$ are real polynomials of
degree at most two in $p_0,\dots,p_4$ can be written as 
\begin{equation}\label{eq:every-quadratic}
F(p)=c+T(p)+\sum_{r=0}^{4}T_r(p)\star_\gamma P_r(p),
\end{equation}
where $c\in\mathcal A_\gamma$ and each of $T,T_0,\dots,T_4,P_0,\dots,P_4$ is a finite sum of compositions
$M_1\circ\dots\circ M_s$ with $M_1,\dots,M_s\in\{L^\gamma_a,R^\gamma_a\mid a\in\mathcal A_\gamma\}$. In particular, every
system of quadratic differential equations $\dot p=F(p)$ on $\mathbb R^5$ can be written as a differential
equation over $\mathcal A_\gamma$.
\end{proposition}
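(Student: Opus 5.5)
The plan is to split $F$ into homogeneous parts, $F=c+F_1+F_2$, with $c=F(0)\in\mathcal A_\gamma$, $F_1$ linear and $F_2$ a homogeneous quadratic map. By Proposition~\ref{prop:mult-algebra}, the multiplication algebra $\mathcal M$ is all of ${\rm End}_{\mathbb R}(\mathcal A_\gamma)$. Every element of $\mathcal M$ is, by definition, a finite linear combination of compositions $M_1\circ\dots\circ M_s$ of left and right multiplications. Scalars can be absorbed via $\lambda M=L^\gamma_{\lambda 1}\circ M$, so finite sums suffice. Hence the linear part $F_1$ already lies in $\mathcal M$, and we take $T=F_1$.

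For the quadratic part, I would use the coordinate functionals. Let $e_0=1,e_1=i,e_2=j,e_3=k,e_4=\ell$, so that $p_r=\langle p,e_r\rangle$ is the $r$-th coordinate. Write each component of $F_2$ as a quadratic form, so that
\[
F_2(p)=\sum_{r,s}p_r\,p_s\,b_{rs}
\]
for suitable vectors $b_{rs}\in\mathcal A_\gamma$. Grouping by $r$ gives
\[
F_2(p)=\sum_{r=0}^4 p_r\,Q_r(p),
\]
where $Q_r(p)=\sum_s p_s b_{rs}$ is linear in $p$. Now define $T_r(p)=p_r\cdot 1$; this is the rank-one endomorphism $p\mapsto\langle p,e_r\rangle 1$, so $T_r\in{\rm End}_{\mathbb R}(\mathcal A_\gamma)=\mathcal M$. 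Set $P_r=Q_r\in\mathcal M$. Since $1$ is the identity, $T_r(p)\star_\gamma P_r(p)=(p_r1)\star_\gamma Q_r(p)=p_rQ_r(p)$. Summing over $r$ gives \eqref{eq:every-quadratic}.

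The only genuinely nontrivial input is the surjectivity of the multiplication algebra, which is Proposition~\ref{prop:mult-algebra}. The remaining point to check is that its elements really are finite sums of compositions of the stated form, rather than merely linear combinations, and this follows from the absorption of scalars noted above. The statement about differential equations then follows immediately by substituting this representation of $F$ into $\dot p=F(p)$. I do not expect any serious obstacle. The main care needed is in bookkeeping: showing that the constant, linear and quadratic pieces each fit the prescribed format. The choice of scalar-valued $T_r$ makes the quadratic step trivial, with no use of nonassociativity or any explicit products from Table~\ref{tab:mult}.
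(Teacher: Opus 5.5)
Your proof is correct and follows essentially the paper's argument. Both proofs split $F$ into homogeneous parts, pair a scalar-valued coordinate map with a linear map through the unit $1$, and use Proposition~\ref{prop:mult-algebra} with absorption of scalars to write every linear map as a finite sum of compositions. The only difference is cosmetic: you put the coordinate factor on the left, $T_r(p)=p_r1$, whereas the paper uses the polarisation $T_r(x)=B(x,e_r)$ and puts $P_r(p)=p_r1$ on the right.
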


The representation \eqref{eq:every-quadratic} is not unique; for instance $p\star_\gamma p=2p_0p-N(p)1$.

\begin{proof}
Decomposing each component of $F$ into its homogeneous parts in $\mathbb R[p_0,\dots,p_4]$, which is a unique decomposition, gives
$F=F_0+F_1+F_2$ with $F_d$ homogeneous of degree $d$.

 Thus $F_0=c$ is a constant
in $\mathcal A_\gamma$ and $F_1=T$ is linear. Each component of $F_2$ is a quadratic form, so
\[
B(x,y)=\tfrac12\bigl(F_2(x+y)-F_2(x)-F_2(y)\bigr)
\]
defines a symmetric bilinear map $B:\mathcal A_\gamma\times\mathcal A_\gamma\to\mathcal A_\gamma$ with $F_2(p)=B(p,p)$.

Put $e_0=1,e_1=i,e_2=j,e_3=k,e_4=\ell$. For $r=0,\dots,4$, define linear maps $T_r,P_r:\mathcal A_\gamma\to\mathcal A_\gamma$ by
\[
T_r(x)=B(x,e_r),\qquad P_r(p)=p_r1,
\]
where $p=\sum_{r=0}^4p_re_r$. Since $\star_\gamma$ is bilinear and $1$ is the identity of $\mathcal A_\gamma$, we get
\[
T_r(p)\star_\gamma P_r(p)=T_r(p)\star_\gamma(p_r1)=p_r\,\bigl(T_r(p)\star_\gamma1\bigr)=p_r\,B(p,e_r).
\]
Summing over $r$ and using that $B$ is linear in its second argument, we obtain that
\[
\sum_{r=0}^4T_r(p)\star_\gamma P_r(p)=B\Bigl(p,\sum_{r=0}^4p_re_r\Bigr)=B(p,p)=F_2(p).
\]
This proves \eqref{eq:every-quadratic} for linear maps $T,T_r,P_r$.

The subalgebra of ${\rm End}_{\mathbb R}(\mathcal A_\gamma)$ generated by the $L^\gamma_a$ and $R^\gamma_a$ is the linear span of
all compositions $M_1\circ\dots\circ M_s$ with $M_1,\dots,M_s\in\{L^\gamma_a,R^\gamma_a\mid a\in\mathcal A_\gamma\}$, since this span
contains the generators and is closed under composition; the identity is $L^\gamma_1$. Real scalar factors can be
absorbed, since $\lambda L^\gamma_a=L^\gamma_{\lambda a}$, so this span consists of finite sums of such compositions. By
Proposition~\ref{prop:mult-algebra}, it equals $\operatorname{End}_{\mathbb R}(\mathcal A_\gamma)$, so it contains $T$, $T_r$ and $P_r$.
\end{proof}

The proof of Proposition~\ref{prop:every-quadratic} only uses that $\mathcal A_\gamma$ is unital and that its
multiplication algebra is ${\rm End}_{\mathbb R}(\mathcal A_\gamma)$; it therefore holds for every finite-dimensional 
unital real algebra with this property. This includes $\mathbb H$, but not $\mathbb C$ (since $\mathbb C\subsetneq{\rm End}_{\mathbb R}(\mathbb C)$).

\begin{remark}\label{rem:basis}
An equation such as $\dot p=A\star_\gamma p+B\star_\gamma(p\star_\gamma p)$ (which is Equation \ref{eq:riccati-main}) does not depend on a basis, but the
five real equations obtained from it do. If $G\in\mathrm{GL}_5(\mathbb R)$ is a change of basis and $q=Gp$, then
$\dot p=F(p)$ becomes $\dot q=GF(G^{-1}q)$, which is again quadratic, but in general with different
coefficients.  Proposition~\ref{prop:every-quadratic} holds for every choice of basis, since
Proposition~\ref{prop:mult-algebra} does not refer to a basis. We use the canonical basis
$1,i,j,k,\ell$, in which the products are given by Table~\ref{tab:mult}.
\end{remark}

By  Proposition~\ref{prop:every-quadratic}, every quadratic system on $\mathbb R^5$ has a representation over
$\mathcal A_\gamma$, and such representations in general have many terms. We will study systems with a short representation: the
Riccati equations $\dot p=A\star_\gamma p+B\star_\gamma(p\star_\gamma p)$, $A,B\in\mathcal A_\gamma$, form a
$10$-dimensional linear subspace of the $105$-dimensional space of quadratic vector fields on $\mathbb R^5$.

\subsection{State convention}
Let
\[
 E=\R1\oplus\R i\oplus\R j\oplus\R k\oplus\R\ell
\]
denote the underlying vector space of the algebras $\SwitchAlg_\gamma$.
Define the $\gamma$-dependent linear map
\[
 S_\gamma:E\longrightarrow E,
 \qquad
 S_\gamma(p_0+p_1i+p_2j+p_3k+p_4\ell)
 =p_0+p_1i+p_2j+p_3k+\gamma p_4\ell,
\]
and write
\begin{equation}
 p_\gamma:=S_\gamma(p)
 =p_0+p_1i+p_2j+p_3k+\gamma p_4\ell.
 \label{eq:weighted-state-main}
\end{equation}
This map is used for the fifth component of the elliptic solutions
in \cref{sec:exact-integrable-path} and for the coefficients and
initial conditions of the Riccati equations in
\cref{sec:riccati-dynamics}.
For $\gamma>0$, $S_\gamma$ is invertible. At $\gamma=0$, we have $ \operatorname{im}S_0=\mathbb H.$

For the Riccati equations in \cref{sec:riccati-dynamics}, the selection is
$A_\gamma=S_\gamma A_1$, $B_\gamma=S_\gamma B_1$, and
$p_\gamma(0)=S_\gamma p_1(0)$.
At $\gamma=0$, these elements have zero $\ell$-component and
belong to $\mathbb H$, so their sums and $\star_0$-products remain
in $\mathbb H$.
Note that the ambient algebra $\SwitchAlg_0$ is
five-dimensional; the image of the state map has dimension four.

\begin{remark}\label{prop:one-element-invisibility}
Let $p=p_0+p_1i+p_2j+p_3k+p_4\ell$ with coordinates independent of $\gamma$. Since $\mathcal{A}_\gamma $ is quadratic, we have
$p\star_\gamma p=2p_0p-N(p)1$ and  $N$ does not depend on $\gamma$. Hence all powers of $p$,
and thus every real polynomial or convergent real power series in $p$, are independent of $\gamma$.
Thus $\gamma$ is only ``visible'' in products of at least two linearly independent elements, such as
$B_\gamma\star_\gamma(p\star_\gamma p)$ in \eqref{eq:riccati-main}. Thus \emph{switching} (i.e., changing $\gamma$, or more precisely, the algebra $\mathcal A_\gamma$) is detected only by interactions among independent elements, nonreal coefficients, or different bracketings.
\end{remark}

\section{Elliptic solutions depending on $\gamma$}
\label{sec:exact-integrable-path}
We now present a family of polynomial differential equations on $\R^5$  together with an explicit solution in terms of
Jacobi elliptic functions.
 Identifying $\R^5$ with
$\mathcal A_\gamma$ via the basis $1,i,j,k,\ell$, the equations can be written in terms of the
multiplication $\star_\gamma$. For $\gamma=0$ the solution is trigonometric and lies in the quaternion
subalgebra $\Span\{1,i,j,k\}$ of $\mathcal A_0$. For $\gamma=1$ it is the solution in $\Pent$ found in~\cite{PoshtkohiPumplunSubmitted}.

As $\gamma$ moves away from $0$, the invariant curve changes from a curve isomorphic to the conic $p_0^2+p_1^2=1$  to a curve whose smooth projective model is an elliptic curve, and the affine span of the solution from dimension four to five,
and its period  increases with $\gamma$.

\subsection{A polynomial system with an invariant elliptic curve}

Let $m\in(0,1)$ and $\sigma\neq0$.  Consider
\begin{equation}
\begin{aligned}
\dot p_0={}&p_1-p_3+p_0p_1+2(1+m)p_0p_3-2(1+m)p_1p_2,\\
\dot p_1={}&p_2-(1+m)p_0-p_0^2+2mp_0p_2,\\
\dot p_2={}&2p_0p_1,\\
\dot p_3={}&1-2(1+m)p_0^2+\frac{3m}{\sigma}p_4p_0,\\
\dot p_4={}&3\sigma p_0p_3.
\end{aligned}
\label{eq:general-elliptic-family-main}
\end{equation}
Define
\begin{equation}
\begin{aligned}
G_1&=p_2-p_0^2,&
G_2&=p_3-p_0p_1,\\
G_3&=p_4-\sigma p_0p_2,&
G_4&=p_1^2-mp_2^2+(1+m)p_2-1.
\end{aligned}
\label{eq:general-ideal-main}
\end{equation}

\begin{theorem} [Invariant prime genus-one ideal]
\label{thm:general-invariant-ideal}
Let $m\in(0,1)$, $\sigma\ne0$, $f(t)=(1-t^2)(1-mt^2)$, and let
$$I_{m,\sigma}=\langle G_1,G_2,G_3,G_4\rangle\subset\C[p_0,\dots,p_4].$$
Then the following holds:
\\ (i)
 $I_{m,\sigma}$ is invariant under \cref{eq:general-elliptic-family-main}, i.e. $\mathcal L_F G_r\in I_{m,\sigma}$ for
$r=1,\dots,4$, where $\mathcal L_F G=\nabla G\cdot F$;
\\ (ii) $\C[p_0,\dots,p_4]/I_{m,\sigma}\cong\C[p_0,p_1]/(p_1^2-f(p_0))$, and $I_{m,\sigma}$ is a prime ideal;
\\ (iii) $V(I_{m,\sigma})$ is isomorphic to the affine curve $p_1^2=f(p_0)$, whose smooth projective
model has genus one.
\end{theorem}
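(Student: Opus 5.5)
The plan is to reduce everything to the ring $R=\C[p_0,p_1]/(p_1^2-f(p_0))$ through the substitution homomorphism
\[
\varphi:\C[p_0,\dots,p_4]\to \C[p_0,p_1],\qquad p_2\mapsto p_0^2,\quad p_3\mapsto p_0p_1,\quad p_4\mapsto \sigma p_0^3,
\]
which fixes $p_0$ and $p_1$. The generators $G_1,G_2,G_3$ are ``triangular'': each is linear in exactly one of $p_2,p_3,p_4$ with coefficient $1$. Hence
\[
\C[p_0,\dots,p_4]/\langle G_1,G_2,G_3\rangle\cong\C[p_0,p_1]
\]
via $\varphi$, and $\ker\varphi=\langle G_1,G_2,G_3\rangle$. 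Computing $\varphi(G_4)=p_1^2-mp_0^4+(1+m)p_0^2-1=p_1^2-f(p_0)$, we get
\[
\C[p_0,\dots,p_4]/I_{m,\sigma}\cong\C[p_0,p_1]/(p_1^2-f(p_0)).
\]
In other words, $I_{m,\sigma}$ is exactly the kernel of the composite $\bar\varphi:\C[p_0,\dots,p_4]\to R$. This gives the first half of (ii), and it also supplies the membership criterion for (i): a polynomial $H$ lies in $I_{m,\sigma}$ if and only if $\bar\varphi(H)=0$, that is, if and only if $\varphi(H)$ is divisible by $p_1^2-f(p_0)$.

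For (i) I would compute $\varphi$ of each right-hand side of \eqref{eq:general-elliptic-family-main}. For instance,
\[
\varphi(\dot p_0)=p_1-p_0p_1+p_0p_1+2(1+m)(p_0^2p_1-p_1p_0^2)=p_1 .
\]
Similarly:
\begin{itemize}
\item $\varphi(\dot p_1)=-(1+m)p_0+2mp_0^3$, which equals $\tfrac12 f'(p_0)$;
\item $\varphi(\dot p_2)=2p_0p_1$;
\item $\varphi(\dot p_3)=1-2(1+m)p_0^2+3mp_0^4$;
\item $\varphi(\dot p_4)=3\sigma p_0^2p_1$.
\end{itemize}
Then $\bar\varphi(\mathcal L_FG_r)$ is obtained by applying $\varphi$ to $\nabla G_r$ and to $F$ separately, because $\varphi$ is a ring homomorphism. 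This gives:
\begin{itemize}
\item $\mathcal L_FG_1\mapsto 2p_0p_1-2p_0p_1=0$.
\item $\mathcal L_FG_3\mapsto 3\sigma p_0^2p_1-\sigma(p_1p_0^2+p_0\cdot 2p_0p_1)=0$.
\item $\mathcal L_FG_4\mapsto 2p_1\cdot\tfrac12f'(p_0)+\bigl(-2mp_0^2+(1+m)\bigr)2p_0p_1$. Since $f'(t)=-2(1+m)t+4mt^3$, this vanishes identically.
\item $\mathcal L_FG_2\mapsto 1-2(1+m)p_0^2+3mp_0^4-p_1^2-p_0\bigl(-(1+m)p_0+2mp_0^3\bigr)=f(p_0)-p_1^2$. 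This is zero only in $R$, and that is precisely where $G_4$ enters.
\end{itemize}
I expect this bookkeeping, especially the $G_2$ check that needs the curve relation, to be the only delicate step. Writing each Lie derivative explicitly as a combination of the $G_r$ is unnecessary, thanks to the kernel description above.

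For primality in (ii), it suffices that $p_1^2-f(p_0)$ is irreducible in $\C[p_0][p_1]$. As a monic quadratic in $p_1$, it could only factor if $f$ were a square in $\C[p_0]$. But for $m\in(0,1)$, $f$ has four distinct roots $\pm1,\pm1/\sqrt m$, so $f$ is not a square. Hence $R$ is a domain and $I_{m,\sigma}$ is prime.

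For (iii), the map $V(I_{m,\sigma})\to\{p_1^2=f(p_0)\}$, $p\mapsto(p_0,p_1)$, is an isomorphism of affine varieties. Its inverse is $(p_0,p_1)\mapsto(p_0,p_1,p_0^2,p_0p_1,\sigma p_0^3)$, which is the geometric counterpart of the ring isomorphism in (ii). The curve $y^2=f(x)$ with $f$ squarefree of degree $4$ is smooth in the affine part, since $2y=f'(x)=0$ would force a repeated root. Its smooth projective model is the hyperelliptic curve of genus $\lfloor(4-1)/2\rfloor=1$; equivalently, it is birational to an elliptic curve by sending a root of $f$ to infinity. This completes the plan.
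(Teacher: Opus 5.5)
Your proposal is correct and follows the paper's proof essentially step for step. It uses the same substitution homomorphism $\varphi$ with kernel $\langle G_1,G_2,G_3\rangle$ and the same reduction of (i) to divisibility by $p_1^2-f(p_0)$, with identical computations (only $\mathcal L_FG_2$ is nonzero, mapping to $f(p_0)-p_1^2$). It also uses the same irreducibility argument via the four simple roots of $f$, and the same smooth-embedding and genus-one conclusion. Your triangular-reduction justification of $\ker\varphi$ and your explicit smoothness check only fill in details the paper asserts without proof.
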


We note that the hypothesis  in Theorem~\ref{thm:general-invariant-ideal} forces $\gamma>0$ (if  $m=\mu\gamma$ and $\sigma=\gamma$ then $\gamma=0$ would imply that  $m=0\notin(0,1)$ and $\sigma=0$). The case $\gamma=0$ will be treated separately.

\begin{proof}
$(ii)$ Let $\varphi\colon\C[p_0,\dots,p_4]\to\C[p_0,p_1]$ be the substitution
$p_2\mapsto p_0^2$, $p_3\mapsto p_0p_1$, $p_4\mapsto\sigma p_0^3$. Its kernel is
$\langle G_1,G_2,G_3\rangle$, and we have
\[
\varphi(G_4)=p_1^2-mp_0^4+(1+m)p_0^2-1=p_1^2-f(p_0).
\]
Hence we get $\C[p_0,\dots,p_4]/I_{m,\sigma}\cong\C[p_0,p_1]/(p_1^2-f(p_0))$.
The polynomial $p_1^2-f(p_0)\in\C[p_0][p_1]$ is monic of degree two in $p_1$, so it is reducible
only if $f$ is a square in $\C[p_0]$. But $f$ has the four simple roots $\pm1,\pm1/\sqrt m$.
Hence $p_1^2-f$ is irreducible in the unique factorisation domain $\C[p_0,p_1]$, the quotient is a domain, and
$I_{m,\sigma}$ is prime.
Moreover, this gives
\begin{equation}
 p_1^2=(1-p_0^2)(1-mp_0^2).
 \label{eq:elliptic-quartic-main}
\end{equation}
\\
$(i)$ It suffices to show $\varphi(\mathcal L_FG_r)\in(p_1^2-f)$. Applying $\varphi$ to
\eqref{eq:general-elliptic-family-main} gives
\[
\dot p_0\mapsto p_1,\quad \dot p_1\mapsto -(1+m)p_0+2mp_0^3=\tfrac12f'(p_0),\quad
\dot p_2\mapsto 2p_0p_1,
\]
$$\dot p_3\mapsto 1-2(1+m)p_0^2+3mp_0^4,\quad
\dot p_4\mapsto 3\sigma p_0^2p_1.$$
Therefore
\begin{align*}
\varphi(\mathcal L_FG_1)&=2p_0p_1-2p_0p_1=0,\\
\varphi(\mathcal L_FG_2)&=1-2(1+m)p_0^2+3mp_0^4-p_1^2-p_0\bigl(-(1+m)p_0+2mp_0^3\bigr)=f(p_0)-p_1^2,\\
\varphi(\mathcal L_FG_3)&=3\sigma p_0^2p_1-\sigma p_0^2p_1-2\sigma p_0^2p_1=0,\\
\varphi(\mathcal L_FG_4)&=p_1f'(p_0)-4mp_0^3p_1+2(1+m)p_0p_1=0 .
\end{align*}
$(iii)$ By $(ii)$, $V(I_{m,\sigma})$ is the image of the curve $p_1^2=f(p_0)$ under the closed
embedding $(p_0,p_1)\mapsto(p_0,p_1,p_0^2,p_0p_1,\sigma p_0^3)$. Since $f$ is square-free of
degree four, this affine curve is smooth, and its smooth projective model is a double cover
of $\mathbb P^1$ branched at four points, hence of genus one.
\end{proof}

Let
\[
C_{m,\sigma}=\{p\in\mathbb R^5\mid G_1(p)=G_2(p)=G_3(p)=G_4(p)=0\},
\]
be the  set of real zeros
of the ideal $I_{m,\sigma}$, cf. Theorem~\ref{thm:general-invariant-ideal}. By Theorem~\ref{thm:general-invariant-ideal}$(i)$, there are
polynomials $h_{rs}$ with $L_FG_r=\sum_{s=1}^4h_{rs}G_s$, so for a solution $p(t)$ the functions
$g_r(t)=G_r(p(t))$ satisfy the linear system $\dot g_r=\sum_{s}h_{rs}(p(t))\,g_s$. If $p(0)\in C_{m,\sigma}$, then
$g(0)=0$, hence $g\equiv0$ by uniqueness, i.e.\ $p(t)\in C_{m,\sigma}$ for all $t$. Thus we know that $C_{m,\sigma}$ is invariant under \eqref{eq:general-elliptic-family-main}.  We call $C_{m,\sigma}$ the \emph{invariant curve} of \eqref{eq:general-elliptic-family-main}.

For $0\leq m<1$, let 
$$K(m)=\int_0^{\pi/2}(1-m\sin^2(\theta))^{-1/2}d\theta,$$ and let $\varphi(t)=\operatorname{am}(t\mid m)$
be the inverse of $\varphi\mapsto\int_0^\varphi(1-m\sin^2\theta)^{-1/2}d\theta$. Let $\sn(t\mid m)=\sin\varphi(t)$, $\cn(t\mid m)=\cos\varphi(t)$ and $\dn(t\mid m)=\sqrt{1-m\,\sn^2(t\mid m)}$ denote
the Jacobi elliptic functions with parameter $m$. Since $\frac{d\varphi}{dt}=\dn(t\mid m)$, the chain
rule gives
\[
\begin{gathered}
 \tfrac{d}{dt}\sn(t\mid m)=\cn(t\mid m)\dn(t\mid m),\qquad
 \tfrac{d}{dt}\cn(t\mid m)=-\sn(t\mid m)\dn(t\mid m),\\
 \tfrac{d}{dt}\dn(t\mid m)=-m\,\sn(t\mid m)\cn(t\mid m),
 \end{gathered}
\]
and by definition $\cn^2(t\mid m)=1-\sn^2(t\mid m)$ and $\dn^2(t\mid m)=1-m\,\sn^2(t\mid m)$.
The function $\sn(\cdot\mid m)$ has minimal period $4K(m)$ and for $m=0$ they equal $\sin$, $\cos$ and $1$.

\begin{theorem}[Jacobi solution]
\label{thm:general-jacobi-solution}
The solution of \cref{eq:general-elliptic-family-main} through $(0,1,0,0,0)$ is
\begin{equation}
\begin{aligned}
 p_0(t)&=\sn(t\mid m),\\
 p_1(t)&=\cn(t\mid m)\dn(t\mid m),\\
 p_2(t)&=\sn^2(t\mid m),\\
 p_3(t)&=\sn(t\mid m)\cn(t\mid m)\dn(t\mid m),\\
 p_4(t)&=\sigma\sn^3(t\mid m),
\end{aligned}
\label{eq:general-jacobi-solution-main}
\end{equation}
with minimal period $4K(m)$.
\end{theorem}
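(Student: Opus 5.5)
The plan is to verify directly that the functions in \eqref{eq:general-jacobi-solution-main} satisfy \cref{eq:general-elliptic-family-main} with the right initial value. Uniqueness of solutions of the polynomial (hence locally Lipschitz) system then identifies them with the solution through $(0,1,0,0,0)$. At $t=0$ we have $\sn=0$ and $\cn=\dn=1$, so the initial value is $(0,1,0,0,0)$ as required.

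Rather than differentiating blindly, I will reduce the computation using the invariant curve. Write $s=\sn$, $c=\cn$, $d=\dn$. The proposed curve satisfies $G_1=G_2=G_3=0$ identically, since $p_2=p_0^2$, $p_3=p_0p_1$ and $p_4=\sigma p_0^3$. It also satisfies $G_4=0$, because $p_1^2=c^2d^2=(1-s^2)(1-ms^2)=f(p_0)$. Hence along the curve every right-hand side may be evaluated through the substitution $\varphi$ from the proof of Theorem~\ref{thm:general-invariant-ideal}(ii). That proof already computed these images:
\[
\dot p_0\mapsto p_1,\qquad \dot p_1\mapsto \tfrac12 f'(p_0),\qquad \dot p_2\mapsto 2p_0p_1,
\]
\[
\dot p_3\mapsto 1-2(1+m)p_0^2+3mp_0^4,\qquad \dot p_4\mapsto 3\sigma p_0^2p_1.
\]
A small check is needed for $\dot p_3$: the term $\frac{3m}{\sigma}p_4p_0$ becomes $3mp_0^4$, which is where $\sigma\neq0$ is used.

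It then suffices to compare these with the actual derivatives.
\begin{itemize}
\item $\frac{d}{dt}s=cd=p_1$.
\item $\frac{d}{dt}(cd)=-sd^2-msc^2=-s\bigl((1-ms^2)+m(1-s^2)\bigr)=-(1+m)s+2ms^3=\tfrac12f'(s)$.
\item $\frac{d}{dt}s^2=2scd$.
\item $\frac{d}{dt}(s\cdot cd)=c^2d^2+s\cdot\tfrac12 f'(s)=f(s)-(1+m)s^2+2ms^4=1-2(1+m)s^2+3ms^4$.
\item $\frac{d}{dt}\sigma s^3=3\sigma s^2cd$.
\end{itemize}
All five match, so the functions in \eqref{eq:general-jacobi-solution-main} solve the system with the required initial value.

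For the minimal period, each component is $4K(m)$-periodic because $s$, $c$ and $d$ are. Conversely, any period $T$ of the solution is a period of $p_0=\sn$, whose minimal period is $4K(m)$. So the minimal period is exactly $4K(m)$.

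The only step needing care is the $\dot p_1$ and $\dot p_3$ identities, which use the relations $c^2=1-s^2$ and $d^2=1-ms^2$. No other obstacle is expected.
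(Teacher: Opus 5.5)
Your proposal is correct and follows essentially the same route as the paper. Both verify the candidate by direct differentiation using $\cn^2=1-\sn^2$ and $\dn^2=1-m\,\sn^2$, identify it with the solution through $(0,1,0,0,0)$ by uniqueness for the polynomial field, and obtain the minimal period $4K(m)$ from $p_0=\sn(\cdot\mid m)$. Your use of the substitution $\varphi$ is valid, since the candidate satisfies $G_1=G_2=G_3=0$ identically, and it is only tidier bookkeeping of the cancellations the paper checks by hand, such as $-p_3+p_0p_1=0$ and $p_0p_3-p_1p_2=0$.
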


\begin{proof}
The functions satisfy
$\dot p_0(t)=\cn(t\mid m)\dn(t\mid m)=p_1(t)$. Moreover,
\[
-p_3(t)+p_0(t)p_1(t)=0,\qquad p_0(t)p_3(t)-p_1(t)p_2(t)=0,
\]
so the first equation holds. Using $\cn^2(t\mid m)=1-\sn^2(t\mid m)$ and
$\dn^2(t\mid m)=1-m\,\sn^2(t\mid m)$, we obtain
\[
\begin{aligned}
\dot p_1(t)&=-\sn(t\mid m)\dn^2(t\mid m)-m\,\sn(t\mid m)\cn^2(t\mid m)\\
&=-(1+m)\sn(t\mid m)+2m\,\sn^3(t\mid m)\\
&=p_2(t)-(1+m)p_0(t)-p_0(t)^2+2m\,p_0(t)p_2(t),
\end{aligned}
\]
and $\dot p_2(t)=2\sn(t\mid m)\cn(t\mid m)\dn(t\mid m)=2p_0(t)p_1(t)$. Further,
\[
\begin{aligned}
\dot p_3(t)&=\cn^2(t\mid m)\dn^2(t\mid m)-\sn^2(t\mid m)\bigl(\dn^2(t\mid m)+m\,\cn^2(t\mid m)\bigr)\\
&=1-2(1+m)\sn^2(t\mid m)+3m\,\sn^4(t\mid m)\\
&=1-2(1+m)p_0(t)^2+\tfrac{3m}{\sigma}\,p_4(t)p_0(t),
\end{aligned}
\]
and
\[
\dot p_4(t)=3\sigma\,\sn^2(t\mid m)\cn(t\mid m)\dn(t\mid m)=3\sigma\,p_0(t)p_3(t).
\]
Hence \eqref{eq:general-jacobi-solution-main} solves \eqref{eq:general-elliptic-family-main}. Since $\sn(0\mid m)=0$ and
$\cn(0\mid m)=\dn(0\mid m)=1$, it satisfies $p(0)=(0,1,0,0,0)$, and it is the solution through this point
because the right-hand side of \eqref{eq:general-elliptic-family-main} is polynomial. All components are $4K(m)$-periodic, and every period of $p$ is a period of
$p_0=\sn(\cdot\mid m)$, whose minimal period is $4K(m)$.
\end{proof}

\begin{proposition}\label{prop:orbit-on-curve}
Let $m\in(0,1)$, $\sigma\neq0$, $f(t)=(1-t^2)(1-mt^2)$ and
$\iota(p_0,p_1)=(p_0,\,p_1,\,p_0^2,\,p_0p_1,\,\sigma p_0^3)$.
\\ (i) We have 
$C_{m,\sigma}=\iota\bigl(\{(p_0,p_1)\in\mathbb R^2\mid p_1^2=f(p_0)\}\bigr).$
\\ (ii)
 $C_{m,\sigma}$ has three connected components: the image of the oval with $|p_0|\le1$, and the images of the two
unbounded branches with $p_0\ge1/\sqrt m$ and $p_0\le-1/\sqrt m$. The orbit $\{p(t)\mid t\in\mathbb R\}$ of the solution
\eqref{eq:general-jacobi-solution-main} is the image of the oval.
\end{proposition}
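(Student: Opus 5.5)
For (i), I will work directly with the real zero set. If $p\in\mathbb R^5$ satisfies $G_1=G_2=G_3=0$, then $p_2=p_0^2$, $p_3=p_0p_1$ and $p_4=\sigma p_2p_0=\sigma p_0^3$. Hence $p=\iota(p_0,p_1)$. Substituting into $G_4$ gives $p_1^2-mp_0^4+(1+m)p_0^2-1=p_1^2-f(p_0)$, exactly as in the computation of $\varphi(G_4)$ in the proof of Theorem~\ref{thm:general-invariant-ideal}(ii). So $G_4(p)=0$ if and only if $p_1^2=f(p_0)$. Conversely, every point $\iota(p_0,p_1)$ with $p_1^2=f(p_0)$ satisfies all four equations. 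This gives the equality in (i).

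For (ii), the map $\iota$ is continuous and injective. Its inverse on the image is the projection $(p_0,\dots,p_4)\mapsto(p_0,p_1)$, which is also continuous. So $\iota$ is a homeomorphism from the real plane curve $\Gamma=\{p_1^2=f(p_0)\}$ onto $C_{m,\sigma}$, and it suffices to count the components of $\Gamma$.

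Since $0<m<1$, we have $1<1/\sqrt m$. On the real line, $f(t)=(1-t^2)(1-mt^2)$ is nonnegative exactly on $[-1,1]\cup[1/\sqrt m,\infty)\cup(-\infty,-1/\sqrt m]$ and negative in between. Hence $\Gamma$ lies over these three disjoint closed sets of $p_0$-values. The pieces over them are therefore separated from one another: they are closed in $\Gamma$, and each is open in $\Gamma$ because the $p_0$-gaps between them are open.

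It remains to check that each piece is connected, and I expect this to be the main, though mild, point.
\begin{itemize}
\item Over $[-1,1]$, the set $\{(t,\pm\sqrt{f(t)})\}$ is the union of the graphs of $\sqrt f$ and $-\sqrt f$. These two arcs meet at $t=\pm1$, so the piece is a closed curve homeomorphic to a circle; this is the oval.
\item Over $[1/\sqrt m,\infty)$, the two graphs $\pm\sqrt f$ meet at $t=1/\sqrt m$. Their union is connected and homeomorphic to a line, and it is unbounded.
\item The branch over $(-\infty,-1/\sqrt m]$ is handled symmetrically.
\end{itemize}
This gives exactly three components.

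For the orbit, I will use Theorem~\ref{thm:general-jacobi-solution}. The solution is $p(t)=\iota(\sn t,\cn t\,\dn t)$, and it lies in $C_{m,\sigma}$ because $\cn^2\dn^2=(1-\sn^2)(1-m\sn^2)=f(\sn)$. Its $p_0$-coordinate satisfies $|\sn|\le1$, so the orbit lies in the image of the oval. For the reverse inclusion:
\begin{itemize}
\item On $[-K,K]$, $\sn$ increases from $-1$ to $1$, since $\cn\,\dn>0$ on $(-K,K)$. This traces the upper arc $p_1=\sqrt{f(p_0)}\ge0$.
\item On $[K,3K]$, $\sn$ decreases from $1$ to $-1$ with $\cn<0$, since $\cn(t)=-\cn(t-2K)$. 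This traces the lower arc.
\end{itemize}
Hence the orbit covers the whole oval, and applying $\iota$ finishes the proof.
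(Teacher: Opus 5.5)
Your proof is correct and follows essentially the same route as the paper. Part (i) uses $G_1=G_2=G_3=0$ to force $p=\iota(p_0,p_1)$ and then rewrites $G_4$ as $p_1^2-f(p_0)$; part (ii) reads the three components off the sign of $f$ and traces the oval via the monotonicity of $\sn$ and the sign of $\cn$ on $[-K,K]$ and $[K,3K]$. Your remark that the projection onto $(p_0,p_1)$ is a continuous inverse of $\iota$, making $\iota$ a homeomorphism onto $C_{m,\sigma}$, is slightly more careful than the paper's appeal to continuity and injectivity alone, and it is what actually justifies transferring the component count.
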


\begin{proof}
$(i)$ The equations $G_1=G_2=G_3=0$ mean that $p_2=p_0^2$, $p_3=p_0p_1$ and $p_4=\sigma p_0p_2=\sigma p_0^3$, i.e.
$p=\iota(p_0,p_1)$. Substituting this into $G_4$ gives $p_1^2-mp_0^4+(1+m)p_0^2-1=p_1^2-f(p_0)$.
\\ $(ii)$ We know that  $f(t)\geq0$ exactly when $|t|\leq1$ or $|t|\ge1/\sqrt m$, and that $f$ vanishes only at $\pm1$ and $\pm1/\sqrt m$, thus the
real curve $p_1^2=f(p_0)$ consists of the oval $p_1=\pm\sqrt{f(p_0)}$, $|p_0|\le1$, and of two  branches
$p_1=\pm\sqrt{f(p_0)}$, $p_0\ge1/\sqrt m$ and $p_0\le-1/\sqrt m$. Since $\iota$ is continuous and injective, $C_{m,\sigma}$ has
the three stated components.

By \eqref{eq:general-jacobi-solution-main}, we know that $p(t)=\iota (\sn(t\mid m),\cn(t\mid m)\dn(t\mid m))$, and
$\cn^2(t\mid m)\dn^2(t\mid m)=f(\sn(t\mid m))$. The amplitude $\varphi(t)$ satisfies $\dot\varphi(t)=\dn(t\mid m)>0$,
$\varphi(0)=0$, $\varphi(K(m))=\pi/2$ and $\varphi(t+2K(m))=\varphi(t)+\pi$. Hence on $[-K(m),K(m)]$ the function
$\sn(t\mid m)=\sin \varphi(t)$ increases from $-1$ to $1$ with $\cn(t\mid m)=\cos\varphi(t)\ge0$, and on $[K(m),3K(m)]$ it
decreases from $1$ to $-1$ with $\cn(t\mid m)\le0$. As $\dn(t\mid m)>0$, the point
$\bigl(\sn(t\mid m),\cn(t\mid m)\dn(t\mid m)\bigr)$ runs through the upper half $p_1=\sqrt{f(p_0)}$ and then the lower half
$p_1=-\sqrt{f(p_0)}$ of the oval. Thus the orbit is the image of the oval.
\end{proof}

\subsection{From the quaternion to the Pentarnion solution}

Fix $0<\mu<1$ and choose
\begin{equation}
 m=\mu\gamma,
 \qquad
 \sigma=\gamma.
 \label{eq:switching-integrable-choice-main}
\end{equation}
Because $m/\sigma=\mu$, the coordinate field remains regular at $\gamma=0$:
\begin{equation}
\begin{aligned}
\dot p_0={}&p_1-p_3+p_0p_1+2(1+\mu\gamma)p_0p_3
 -2(1+\mu\gamma)p_1p_2,\\
\dot p_1={}&p_2-(1+\mu\gamma)p_0-p_0^2+2\mu\gamma p_0p_2,\\
\dot p_2={}&2p_0p_1,\\
\dot p_3={}&1-2(1+\mu\gamma)p_0^2+3\mu p_4p_0,\\
\dot p_4={}&3\gamma p_0p_3.
\end{aligned}
\label{eq:flagship-switching-field}
\end{equation}

\begin{proposition}
\label{thm:flagship-orbit}
Let $0<\mu<1$ and $0\leq\gamma\leq 1$. Let
\[
 s_\gamma(t)=\sn(t\mid\mu\gamma),
 \qquad
 v_\gamma(t)=\cn(t\mid\mu\gamma)\dn(t\mid\mu\gamma).
\]
Then the solution of  \cref{eq:flagship-switching-field} such that $p(0)=(0,1,0,0,0)$ is
\begin{equation}
 p_\gamma^*(t)
 =s_\gamma+v_\gamma i+s_\gamma^2j+s_\gamma v_\gamma k
 +\gamma s_\gamma^3\ell
\label{eq:flagship-orbit-main}
\end{equation}
and its minimal period is
\begin{equation}
 T(\gamma)=4K(\mu\gamma).
 \label{eq:flagship-period-main}
\end{equation}
The fifth component has precisely the form selected by the state map in \cref{eq:weighted-state-main}.

For $\gamma=0$, we have $p_0^*(t)=\sin t+\cos t\, i+\sin^2t\, j+\sin t\cos t\, k$, which lies in the
quaternion subalgebra $\Span\{1,i,j,k\}$ of $\mathcal A_0$.
\end{proposition}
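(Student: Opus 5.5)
The plan is to split into the cases $\gamma>0$ and $\gamma=0$, and to reduce the first case directly to Theorem~\ref{thm:general-jacobi-solution}.

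\emph{Case $0<\gamma\le1$.} Put $m=\mu\gamma$ and $\sigma=\gamma$ as in \eqref{eq:switching-integrable-choice-main}. Then $m\in(0,1)$ because $0<\mu<1$ and $0<\gamma\le 1$, and $\sigma\neq0$. Substituting these values into \eqref{eq:general-elliptic-family-main} gives exactly \eqref{eq:flagship-switching-field}, since the only term involving $1/\sigma$ is $\frac{3m}{\sigma}p_4p_0=3\mu p_4p_0$. Theorem~\ref{thm:general-jacobi-solution} then says that the solution through $(0,1,0,0,0)$ is \eqref{eq:general-jacobi-solution-main} with parameter $\mu\gamma$. Written in the basis $1,i,j,k,\ell$, this is precisely \eqref{eq:flagship-orbit-main}. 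The same theorem gives minimal period $4K(\mu\gamma)$. The fifth component is $\gamma s_\gamma^3$, so $p^*_\gamma=S_\gamma(s_\gamma+v_\gamma i+s_\gamma^2 j+s_\gamma v_\gamma k+s_\gamma^3\ell)$ is of the form \eqref{eq:weighted-state-main}.

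\emph{Case $\gamma=0$.} Theorem~\ref{thm:general-jacobi-solution} requires $m>0$ and $\sigma\ne0$, so this case has to be checked directly. The argument is short. At $m=0$ we have $\sn=\sin$, $\cn=\cos$ and $\dn=1$, so the candidate is
\[
(p_0,p_1,p_2,p_3,p_4)=(\sin t,\cos t,\sin^2t,\sin t\cos t,0).
\]
The system \eqref{eq:flagship-switching-field} at $\gamma=0$ reads
\[
\dot p_0=p_1-p_3+p_0p_1+2p_0p_3-2p_1p_2,\quad \dot p_1=p_2-p_0-p_0^2,\quad \dot p_2=2p_0p_1,\quad \dot p_3=1-2p_0^2+3\mu p_4p_0,\quad \dot p_4=0.
\]
The verification is the same as in the proof of Theorem~\ref{thm:general-jacobi-solution} with $m=0$:
\begin{itemize}
\item first equation: $p_3=p_0p_1$ and $p_0p_3=p_1p_2$ give $\dot p_0=\cos t=p_1$;
\item second equation: $\dot p_1=-\sin t=p_2-p_0-p_0^2$ because $p_2=p_0^2$;
\item third equation: $\dot p_2=2\sin t\cos t=2p_0p_1$;
\item fourth equation: $\dot p_3=\cos^2t-\sin^2t=1-2\sin^2 t$, and the $p_4$ term vanishes because $p_4\equiv0$;
\item fifth equation: $\dot p_4=0$ holds trivially.
\end{itemize}
The initial value is $(0,1,0,0,0)$. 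Uniqueness follows from the polynomial right-hand side. The minimal period is $2\pi=4K(0)$, because $p_0=\sin t$ has minimal period $2\pi$ and every component is $2\pi$-periodic. Since the $\ell$-component vanishes, the solution lies in $\Span\{1,i,j,k\}$, which is $\mathbb H\subset\mathcal A_0$.

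The only delicate point is this boundary case $\gamma=0$, where the hypotheses of the earlier theorem fail. I expect no real obstacle beyond noting that the field \eqref{eq:flagship-switching-field} stays regular at $\gamma=0$, because the factor $m/\sigma=\mu$ is constant.
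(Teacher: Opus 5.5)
Your proof is correct and takes essentially the paper's route: both reduce to the computation behind Theorem~\ref{thm:general-jacobi-solution} via $\frac{3m}{\sigma}=3\mu$. The only difference is that you cite that theorem as a black box for $\gamma>0$ and check $\gamma=0$ by hand, whereas the paper reruns the same computation for all $\gamma\in[0,1]$ at once (using $3\mu\,p_4p_0=3m\,s_\gamma^4$ on the candidate), so it needs no case split.
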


\begin{proof}
Put $m=\mu\gamma\in[0,1)$. The system \eqref{eq:flagship-switching-field} is
\eqref{eq:general-elliptic-family-main} with $\sigma=\gamma$, except that the term
$\frac{3m}{\sigma}p_4p_0$ is replaced by $3\mu\, p_4p_0$. For $p_4=\gamma s_\gamma^3$ and $p_0=s_\gamma$ we
have $3\mu\,p_4p_0=3m\,s_\gamma^4$, which is exactly the value used for this term in the proof of
Theorem~\ref{thm:general-jacobi-solution}. Hence the computation in that proof, with $s=s_\gamma$,
$cd=v_\gamma$ and $\sigma=\gamma$, shows that \eqref{eq:flagship-orbit-main} solves
\eqref{eq:flagship-switching-field}. This includes $\gamma=0$, where $s_0=\sin$, $v_0=\cos$ and $p_4=0$.
At $t=0$ we get $p(0)=(0,1,0,0,0)$, and uniqueness follows since the right-hand side is polynomial. Analogously as  in Theorem~\ref{thm:general-jacobi-solution}, every period of $p_\gamma^*$ is a period of $s_\gamma=\sn(\cdot\mid m)$, whose minimal period is $4K(m)$.
\end{proof}

It remains to treat the case that $\gamma=0$. For $0\le\gamma\le1$ and $0<\mu<1$, let $G_1^\gamma,\dots,G_4^\gamma$ denote the polynomials from \eqref{eq:general-ideal-main} with
$m=\mu\gamma$ and $\sigma=\gamma$, i.e.
\[
G_1^\gamma=p_2-p_0^2,\quad G_2^\gamma=p_3-p_0p_1,\quad G_3^\gamma=p_4-\gamma p_0p_2,\quad
G_4^\gamma=p_1^2-\mu\gamma p_2^2+(1+\mu\gamma)p_2-1,
\]
and let $J_\gamma=\langle G_1^\gamma,\dots,G_4^\gamma\rangle\subset\mathbb C[p_0,\dots,p_4]$.

For $0\leq\gamma\leq 1$, the invariant curve is given by
$$C_\gamma=\{p\in\mathbb R^5\mid G_1^\gamma(p)=\dots=G_4^\gamma(p)=0\}.$$ 
For $\gamma>0$, we obtain $C_\gamma=C_{\mu\gamma,\gamma}$.

\begin{lemma}\label{lem:ideal-switching}
For $0\le\gamma\le1$ and $0<\mu<1$, the ideal $J_\gamma$ and hence $C_\gamma$ is invariant under  under \cref{eq:general-elliptic-family-main}.
\end{lemma}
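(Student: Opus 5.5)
The plan is to repeat the argument of Theorem~\ref{thm:general-invariant-ideal}$(i)$ with every $\gamma$-dependence written out, and to check that no step divides by $\sigma=\gamma$ or uses $m>0$. The vector field meant here is \eqref{eq:general-elliptic-family-main} with $m=\mu\gamma$ and $\sigma=\gamma$. In that form the coefficient $3m/\sigma$ is replaced by $3\mu$, so this is precisely the field \eqref{eq:flagship-switching-field}. It is polynomial in $p$ and in $\gamma$, and regular at $\gamma=0$. I will work with \eqref{eq:flagship-switching-field} throughout.

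First, let $\varphi_\gamma\colon\C[p_0,\dots,p_4]\to\C[p_0,p_1]$ be the substitution $p_2\mapsto p_0^2$, $p_3\mapsto p_0p_1$, $p_4\mapsto\gamma p_0^3$. This is a surjective ring homomorphism, and its kernel is $\langle G_1^\gamma,G_2^\gamma,G_3^\gamma\rangle$. The reason is that each $G_r^\gamma$, $r\le3$, is monic of degree one in the variable it eliminates, so reduction modulo these three generators is a section. This holds for $\gamma=0$ as well, where $G_3^0=p_4$. Moreover $\varphi_\gamma(G_4^\gamma)=p_1^2-f_\gamma(p_0)$ with $f_\gamma(t)=(1-t^2)(1-\mu\gamma t^2)$. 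Hence $J_\gamma=\varphi_\gamma^{-1}\bigl((p_1^2-f_\gamma)\bigr)$. So it suffices to show $\varphi_\gamma(\mathcal L_FG_r^\gamma)\in(p_1^2-f_\gamma)$ for $r=1,\dots,4$. Primality of $J_\gamma$ is not needed, which matters because at $\gamma=0$ we have $f_0=1-t^2$, which has only two roots.

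Second, apply $\varphi_\gamma$ to the right-hand sides of \eqref{eq:flagship-switching-field}. This gives $\dot p_0\mapsto p_1$, $\dot p_1\mapsto-(1+\mu\gamma)p_0+2\mu\gamma p_0^3=\tfrac12 f_\gamma'(p_0)$ and $\dot p_2\mapsto2p_0p_1$. The one term that differs from before is $3\mu p_4p_0\mapsto3\mu\gamma p_0^4$, which equals $3mp_0^4$; so $\dot p_3\mapsto1-2(1+\mu\gamma)p_0^2+3\mu\gamma p_0^4$. Finally $\dot p_4\mapsto3\gamma p_0^2p_1$. The four Lie derivatives then reduce exactly as in the proof of Theorem~\ref{thm:general-invariant-ideal}$(i)$, now with $m=\mu\gamma$ and $\sigma=\gamma$:
\[
\varphi_\gamma(\mathcal L_FG_1^\gamma)=0,\quad
\varphi_\gamma(\mathcal L_FG_2^\gamma)=f_\gamma(p_0)-p_1^2,\quad
\varphi_\gamma(\mathcal L_FG_3^\gamma)=3\gamma p_0^2p_1-\gamma p_0^2p_1-2\gamma p_0^2p_1=0,
\]
and $\varphi_\gamma(\mathcal L_FG_4^\gamma)=p_1f_\gamma'(p_0)-4\mu\gamma p_0^3p_1+2(1+\mu\gamma)p_0p_1=0$. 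These are polynomial identities in $\gamma$, so they hold on all of $[0,1]$. This proves the invariance of $J_\gamma$.

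Third, invariance of $C_\gamma$ follows from the argument given after Theorem~\ref{thm:general-invariant-ideal}. Write $\mathcal L_FG_r^\gamma=\sum_sh_{rs}G_s^\gamma$. Along a solution starting in $C_\gamma$, the functions $g_r=G_r^\gamma(p(t))$ satisfy a linear homogeneous ODE with $g(0)=0$, so $g\equiv0$. The only point requiring care, and the one I expect to be the real obstacle, is the degenerate endpoint $\gamma=0$. There $\sigma=0$ and $m=0$ lie outside the hypotheses of Theorem~\ref{thm:general-invariant-ideal}, so that theorem cannot simply be cited. The remedy is that the computation above never divides by $\gamma$ and never uses the four distinct roots of $f$. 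The choice \eqref{eq:switching-integrable-choice-main} is exactly what removes the factor $1/\sigma$.
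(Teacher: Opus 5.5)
Your proof is correct and is essentially the paper's argument. The paper cites Theorem~\ref{thm:general-invariant-ideal}$(i)$ for $\gamma>0$ and redoes the same substitution computation with $\varphi_0$ only at $\gamma=0$, whereas you run that computation once with $\varphi_\gamma$ for every $\gamma\in[0,1]$; since nothing divides by $\gamma$, this removes the case split.
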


\begin{proof}
For $\gamma>0$, the assertion follows from  Theorem~\ref{thm:general-invariant-ideal}$(i)$  with $m=\mu\gamma$ and $\sigma=\gamma$, since then
$\tfrac{3m}{\sigma}=3\mu$, and $J_\gamma=I_{m,\sigma}$.

 For $\gamma=0$, take
$\varphi_0:\mathbb C[p_0,\dots,p_4]\to\mathbb C[p_0,p_1]$, $p_2\mapsto p_0^2$, $p_3\mapsto p_0p_1$, $p_4\mapsto0$.
Then ${\rm Ker}(\varphi_0)=\langle G_1^0,G_2^0,G_3^0\rangle$ and $\varphi_0(G_4^0)=p_0^2+p_1^2-1$, so it suffices to show
$\varphi_0(L_FG_r^0)\in(p_0^2+p_1^2-1)$. Applying $\varphi_0$ to \eqref{eq:flagship-switching-field} with $\gamma=0$ gives
\[
\dot p_0\mapsto p_1,\qquad \dot p_1\mapsto-p_0,\qquad \dot p_2\mapsto2p_0p_1,\qquad \dot p_3\mapsto1-2p_0^2,\qquad
\dot p_4\mapsto0.
\]
Therefore
\[
\begin{aligned}
\varphi_0(L_FG_1^0)&=2p_0p_1-2p_0p_1=0,\\
\varphi_0(L_FG_2^0)&=1-2p_0^2-p_1^2+p_0^2=-(p_0^2+p_1^2-1),\\
\varphi_0(L_FG_3^0)&=0,\\
\varphi_0(L_FG_4^0)&=-2p_0p_1+2p_0p_1=0.
\end{aligned}
\]
\end{proof}

 We can now include the case $\gamma=0$: For $\gamma>0$, Proposition~\ref{prop:orbit-on-curve} with $m=\mu\gamma$, $\sigma=\gamma$ shows that the
orbit of \eqref{eq:general-jacobi-solution-main} is the component of $C_\gamma$ with $|p_0|\le1$. For $\gamma=0$,
$C_0=\{(p_0,p_1,p_0^2,p_0p_1,0)\mid p_0^2+p_1^2=1\}$ is connected, and the orbit
$$p^*_0(t)=\sin t+\cos t\,i+\sin^2t\,j+\sin t\cos t\,k$$ is all of $C_0$.

\begin{theorem}[Geometric transition]
\label{thm:integrable-geometric-transition}
 Fix some $\mu$, $0<\mu<1$.
\begin{enumerate}[label=\textnormal{(\roman*)}]

\item 
at $\gamma=0$, the invariant curve is isomorphic to the conic
$p_1^2+p_0^2=1$, has genus zero, and the orbit has affine span four;

\item for every $\gamma>0$, the smooth projective model of $C_\gamma$ has genus one and the orbit has full affine span five;
\item the period $T(\gamma)$ is strictly increasing;
\item the maximal fifth-coordinate amplitude is exactly $\gamma$.
\end{enumerate}
\end{theorem}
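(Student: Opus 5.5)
The plan is to handle each of the four items directly from the explicit description of $C_\gamma$ and of the orbit $p^*_\gamma$ in Proposition~\ref{thm:flagship-orbit}.

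For (i), the description of $C_0$ given just before the theorem already contains the key facts. The map $(p_0,p_1)\mapsto(p_0,p_1,p_0^2,p_0p_1,0)$ is a closed embedding whose inverse is the projection to $(p_0,p_1)$, so $C_0$ is isomorphic to the conic $p_0^2+p_1^2=1$. A smooth conic is rational, hence has genus zero. For the affine span of the orbit I will use the following test: an affine span has dimension $d$ exactly when the functions $1,p_0(t),\dots,p_4(t)$ span a space of dimension $d+1$. At $\gamma=0$ these functions are $1,\sin t,\cos t,\sin^2 t,\sin t\cos t,0$. I will write $\sin^2t=\tfrac12(1-\cos 2t)$ and $\sin t\cos t=\tfrac12\sin 2t$. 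Then linear independence of $1,\cos t,\sin t,\cos 2t,\sin 2t$ shows that the nonzero functions are independent. This gives affine span exactly four, and the span lies in $p_4=0$.

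For (ii), the genus statement is Theorem~\ref{thm:general-invariant-ideal}(iii) with $m=\mu\gamma\in(0,1)$ and $\sigma=\gamma\neq0$. For the affine span I will show that $1,s,v,s^2,sv,\gamma s^3$ are linearly independent, where $s=s_\gamma$ and $v=v_\gamma$. Suppose a relation $a+bs+cs^2+e\gamma s^3+v(d+fs)=0$ holds. On the oval, $v=\pm\sqrt{f(s)}$ takes both signs over each interior value $s\in(-1,1)$. By Proposition~\ref{prop:orbit-on-curve}(ii) the orbit covers both the upper and lower halves, so the relation must hold on both. Adding and subtracting the two versions gives $a+bs+cs^2+e\gamma s^3=0$ and $(d+fs)\sqrt{f(s)}=0$ for all $s\in(-1,1)$. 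Both are polynomial identities on an interval, so every coefficient vanishes; here $\gamma\ne0$ is exactly what is needed to kill $e$. So the span is five.

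For (iii), I will use $T(\gamma)=4K(\mu\gamma)$ and differentiate under the integral sign. The derivative is
\[
\frac{d}{dm}K(m)=\tfrac12\int_0^{\pi/2}\sin^2\theta\,(1-m\sin^2\theta)^{-3/2}\dd\theta>0 ,
\]
which is valid for $m\in[0,\mu]\subset[0,1)$ because the integrand is smooth and bounded there. Since $\mu>0$, the composition $\gamma\mapsto 4K(\mu\gamma)$ is strictly increasing. A pointwise comparison also works: the integrand of $K$ is strictly increasing in $m$ for $\theta\in(0,\pi/2]$.

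For (iv), $p_4(t)=\gamma\,\sn^3(t\mid\mu\gamma)$ and $|\sn|\le1$, so $|p_4|\le\gamma$. Equality is attained at $t=K(\mu\gamma)$, where $\sn=1$. At $\gamma=0$ both sides are zero.

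The only step that needs care is the linear-independence argument in (ii). The subtle point is justifying that both signs of $v$ occur over each $s$, which Proposition~\ref{prop:orbit-on-curve}(ii) provides. I also need to say clearly that "affine span" means the dimension of the affine hull of the orbit.
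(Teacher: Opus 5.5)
Your proposal is correct and follows the paper's proof item by item. The only variation is in (ii): you show that $A(s)+B(s)v=0$ forces $A=B=0$ by evaluating at both signs $v=\pm\sqrt{f(s)}$ on the oval. The paper instead invokes independence of $1$ and $p_1$ over $\R(p_0)$ for the nonsquare quartic. Your version is more elementary, and it makes explicit that the jump to affine span five comes from the nonzero $\gamma s^3$ term rather than from the genus. One small fix: rename your coefficient $f$ so it does not clash with $f(t)=(1-t^2)(1-mt^2)$.
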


\begin{proof}
$(i) $ At $\gamma=0$, $m=0$, $p_4=0$, and the Jacobi functions reduce to sine and cosine.
The curve is the image of $x^2+y^2=1$ under
$(x,y)\mapsto(x,y,x^2,xy,0)$, with inverse given by projection onto
the first two coordinates. The linear independence of
$1,\sin t,\cos t,\cos 2t,\sin 2t$ gives affine span four.
\\ $(ii)$ For $\gamma>0$, \cref{eq:elliptic-quartic-main} has four distinct branch points.  If an affine relation holds on the orbit, it can be written as $A(p_0)+B(p_0)p_1=0$, with $\deg A\leq3$ and $\deg B\leq1$.  Since $1$ and $p_1$ are independent over $\R(p_0)$ on the nonsquare quartic, all coefficients vanish. 
\\ $(iii)$ and $(iv)$: Monotonicity of the complete elliptic integral $K(m)$ on $0<m<1$ gives the period statement, and $\max|\sn|=1$ gives the amplitude statement.
\end{proof}

At $\gamma=1$, choosing $\mu=1/2$ gives
\[
 p_1^*(t)=s+vi+s^2j+svk+s^3\ell,
 \qquad
 s=\sn(t\mid1/2),
 \quad
 v=\cn(t\mid1/2)\dn(t\mid1/2),
\]
which is exactly the full-affine-span Jacobi solution of \cite{PoshtkohiPumplunSubmitted}. 

In \cite{PoshtkohiPumplunSubmitted}, we only treated the case that  $m=1/2$, now $m=\mu\gamma$ varies continously with $\gamma$.

\begin{proposition}
\label{prop:ordered-product-representation}
For every $0<\gamma\leq1$ and $0<\mu<1$, there exist real-linear maps
\[
 \mathsf A_{\gamma,\mu},\mathsf B_{\gamma,\mu}:
 \SwitchAlg_\gamma\to\SwitchAlg_\gamma
\]
and a linear map $\mathsf N_{\gamma,\mu}$ such that the vector field in \cref{eq:flagship-switching-field} can be written
\begin{equation}
 F_{\gamma,\mu}(p)
 =k+\mathsf N_{\gamma,\mu}(p)
 +p\star_\gamma\mathsf A_{\gamma,\mu}(p)
 +\mathsf B_{\gamma,\mu}(p)\star_\gamma p.
 \label{eq:ordered-representation-main}
\end{equation}
The maps $A_{\gamma,\mu}$, $B_{\gamma,\mu}$ are not unique; the possible choices form an affine space of
dimension six.
\end{proposition}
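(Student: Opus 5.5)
The plan is to split $F_{\gamma,\mu}$ into homogeneous parts, as in the proof of Proposition~\ref{prop:every-quadratic}. The constant part is $F_0=k$, because the only constant term in \eqref{eq:flagship-switching-field} is the $1$ in $\dot p_3$, and $p_3$ is the $k$-coordinate. The linear part $F_1$ is taken as $\mathsf N_{\gamma,\mu}$. Everything then reduces to the quadratic part $F_2$, which must be written as $p\star_\gamma\mathsf A(p)+\mathsf B(p)\star_\gamma p$ with $\mathsf A,\mathsf B$ linear.

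Consider the real-linear map
\[
\Phi_\gamma:\operatorname{End}_{\mathbb R}(\mathcal A_\gamma)\oplus\operatorname{End}_{\mathbb R}(\mathcal A_\gamma)\to \operatorname{Quad}(\mathbb R^5,\mathbb R^5),\qquad (\mathsf A,\mathsf B)\mapsto\bigl(p\mapsto p\star_\gamma\mathsf A(p)+\mathsf B(p)\star_\gamma p\bigr),
\]
from a $50$-dimensional space to the $75$-dimensional space of quadratic maps. The existence statement says that $F_2\in\operatorname{im}\Phi_\gamma$. The non-uniqueness statement says that $\dim\ker\Phi_\gamma=6$, since the set of solutions is then the affine space $(\mathsf A_0,\mathsf B_0)+\ker\Phi_\gamma$.

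First, write $\Phi_\gamma$ as a $75\times 50$ matrix with entries in $\mathbb Q[\kg,\sg,1/\sqrt2]$. Use Table~\ref{tab:mult} and the coordinate formulas of Appendix~\ref{app:coordinate-matrices}, taking the coefficients of the monomials $p_ap_b$ ($a\le b$) in each of the five output components.

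Second, compute the kernel. Some elements can be seen by hand. For instance, $(\lambda\,\mathrm{id},-\lambda\,\mathrm{id})$ lies in the kernel, since $p\star_\gamma p-p\star_\gamma p=0$. Further elements should come from the quadratic identity $p\star_\gamma p=2p_0p-N(p)1$ and from the anticommutativity of $\times_\gamma$. Examples are pairs $\mathsf A(p)=u\,p_0$, $\mathsf B(p)=-u\,p_0$ with $u$ pure, which gives $p_0(pu-up)=2p_0\,(\operatorname{Im}p\times_\gamma u)$; these have to be combined so that the total vanishes. Rather than guess, I would use computer algebra, as the paper does elsewhere, to show that the rank is $44$ for all $\gamma\in(0,1]$. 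This means exhibiting a $44\times44$ minor whose determinant is a nonvanishing function of $\gamma$ on $(0,1]$, and checking that every $45\times45$ minor vanishes identically in $\gamma$.

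Third, solve $\Phi_\gamma(\mathsf A,\mathsf B)=F_2$ symbolically in $\gamma,\mu$. This yields an explicit particular solution $(\mathsf A_{\gamma,\mu},\mathsf B_{\gamma,\mu})$, which is then verified directly by expanding $p\star_\gamma\mathsf A(p)+\mathsf B(p)\star_\gamma p$ and comparing with the quadratic terms of \eqref{eq:flagship-switching-field}.

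The main obstacle is this third step. Since $\operatorname{rank}\Phi_\gamma=44<75$, the image is a proper subspace, so Proposition~\ref{prop:every-quadratic} gives no guarantee here, and membership of $F_2$ is a genuine condition. It amounts to $31$ linear compatibility conditions on the coefficients of $F_2$. These have to hold identically in $\gamma\in(0,1]$ and $\mu$, with divisions only by quantities such as $\sg$, which are nonzero for $\gamma>0$. The exclusion of $\gamma=0$ in the statement suggests that the particular solution involves $1/\sg$, for example through the $\ell$-coordinate term $3\gamma p_0p_3$ in $\dot p_4$. I would first check the conditions on the $\ell$- and scalar components, where $\sg$ enters most directly. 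I would also make sure that the rank does not drop at isolated values of $\gamma$, since such a drop would change the kernel dimension away from six.
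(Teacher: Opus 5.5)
Your plan would prove the statement. On the existence side it is the paper's proof: there too a particular pair $(\mathsf A_{\gamma,\mu},\mathsf B_{\gamma,\mu})$ is found and checked by exact symbolic computation. You genuinely differ in the dimension count. You want to certify $\operatorname{rank}\Phi_\gamma=44$ by computer, whereas the paper determines $\ker\Phi_\gamma$ by hand:
\begin{itemize}
\item $p=1$ gives $\mathsf B(1)=-\mathsf A(1)$;
\item the coefficient of $s$ for $p=s1+u$ gives $(\mathsf A+\mathsf B)(u)=2v\times_\gamma u$ with $v=\operatorname{Im}\mathsf A(1)$;
\item the scalar part at $p=u$ gives $\langle u,v\times_\gamma u\rangle=0$, and evaluating this at three explicit vectors forces $v=0$;
\item finally $u\times_\gamma T(u)=0$ for $T=\operatorname{Im}\mathsf A|_V$, together with $\ker(\ell\times_\gamma\,\cdot\,)=\mathbb R\ell$, gives $T=c\,\mathrm{id}$.
\end{itemize}
So the kernel is exactly the set of pairs $(f(\cdot)1+c\,\mathrm{id},\,-f(\cdot)1-c\,\mathrm{id})$, with $f$ a linear functional and $c\in\mathbb R$. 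This gives the kernel explicitly and uniformly in $\gamma$. It therefore replaces both halves of your rank certification, and your worry about rank drops at isolated $\gamma$ never arises. Your computational route is mechanical, but as literally stated it is not practical: enumerating all $45\times45$ minors of a $75\times50$ matrix is far out of reach. The sensible substitute for the upper bound $\operatorname{rank}\le44$ is precisely to exhibit six kernel elements valid for all $\gamma$.

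Two side remarks in your proposal are off. First, the pairs $\mathsf A(p)=u\,p_0$, $\mathsf B(p)=-u\,p_0$ lie in the kernel exactly when $u$ is real, not pure. In that case $p_0$ may be replaced by any linear functional, and this is where five of the six kernel dimensions come from. Second, the particular solution involves no $1/\sigma_\gamma$. With $\gamma=2\tau/(1-\tau^2)$ the paper reports the denominators $1\pm\tau$, $1+\tau^2$ and $\tau^4+26\tau^2+1=2(1+\tau^2)^2\bigl(3\sigma_\gamma^2+\tfrac12\bigr)$, none of which vanishes at $\tau=0$.

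In fact your ``main obstacle'' can also be settled by hand, along the same lines as the kernel. Write $p=s1+u$.
\begin{itemize}
\item The $s^2$-part fixes $\mathsf A(1)+\mathsf B(1)=-i-2(1+\mu\gamma)k$.
\item The $s$-part forces the scalar part of $(\mathsf A+\mathsf B)|_V$ to vanish and expresses $(\mathsf A+\mathsf B)|_V$ through $v=\operatorname{Im}\mathsf A(1)$.
\item Choosing $\mathsf A|_V=\mathsf B|_V$ kills the vector part of the $s^0$-term.
\item The requirement that the scalar part of the $s^0$-term equal $-2(1+\mu\gamma)p_1p_2$ reduces to $v_4=0$ together with a $3\times3$ linear system for $v_1,v_2,v_3$. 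Its determinant is a nonzero multiple of $3\sigma_\gamma^2+\tfrac12$.
\end{itemize}
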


\begin{proof}
Write $e_0=1$, $e_1=i$, $e_2=j$, $e_3=k$ and $e_4=\ell$.
Comparing the coefficients of $p_rp_s\,e_t$, with
$0\leq r\leq s\leq4$ and $0\leq t\leq4$, on both sides of
\eqref{eq:ordered-representation-main} gives a linear system
of $75$ equations for the $50$ entries of the matrices of
$A_{\gamma,\mu}$ and $B_{\gamma,\mu}$.

An explicit particular solution and its exact symbolic verification
are supplied with the accompanying code and data described in
Appendix~\ref{app:reproducibility}.
The verifier constructs the coefficient system from
Table~\ref{tab:mult}, solves it using exact symbolic arithmetic,
and verifies all $75$ coefficient equations by substitution.
With
\[
 \gamma=\frac{2\tau}{1-\tau^2},
 \qquad 0<\tau\leq\sqrt2-1,
\]
the parameter-dependent denominator factors of the supplied
solution are
\[
 1-\tau,\qquad 1+\tau,\qquad 1+\tau^2,\qquad
 \tau^4+26\tau^2+1.
\]
None vanishes on this interval, so the solution is defined
throughout the stated parameter range.

To establish the dimension of the solution space, consider
the associated homogeneous identity
\[
 p\star_\gamma A(p)+B(p)\star_\gamma p=0
 \qquad\text{for all }p\in\mathcal A_\gamma.
\]
Put $V=\operatorname{span}_{\mathbb R}\{i,j,k,\ell\}$ and
$v=\operatorname{Im}A(1)$.
Substituting $p=1$ and then comparing the coefficient of $s$
after substituting $p=s1+u$, $u\in V$, gives
\[
 B(1)=-A(1),\qquad (A+B)(u)=2v\times_\gamma u.
\]
Taking scalar parts of the homogeneous identity at $p=u$ yields
\[
 \langle u,v\times_\gamma u\rangle=0
 \qquad\text{for all }u\in V.
\]
Write $v=x+d\ell$ with $x\in\operatorname{span}\{i,j,k\}$,
and put $\omega=i+j+k$.
Applying this equality successively to
$u=\omega$, $u=\omega+\ell$ and $u=x+\ell$, the multiplication
table gives, respectively,
\[
 3\sqrt2\,d=0,\qquad
 -\sqrt2\,\langle x,\omega\rangle=0,\qquad
 \frac{\|x\|^2}{\sqrt2}=0.
\]
Thus $v=0$ and $B=-A$.

Let $T(u)=\operatorname{Im}A(u)$ for $u\in V$. The homogeneous
identity now gives $u\times_\gamma T(u)=0$ for every $u\in V$.
The multiplication table also gives
\[
 \ker\bigl(u\mapsto\ell\times_\gamma u\bigr)=\mathbb R\ell.
\]
Consequently, $T(\ell)=c\ell$ for some $c\in\mathbb R$.
Polarising $u\times_\gamma T(u)=0$ with $\ell$ gives
\[
 \ell\times_\gamma\bigl(T(u)-cu\bigr)=0,
\]
so $T(u)-cu\in\mathbb R\ell$. For $u\notin\mathbb R\ell$,
the equality $u\times_\gamma T(u)=0$ then forces $T(u)=cu$;
the same conclusion holds on $\mathbb R\ell$.
Hence the homogeneous solutions are exactly
\[
 A(p)=f(p)1+cp,\qquad
 B(p)=-f(p)1-cp,
\]
where $f:\mathcal A_\gamma\to\mathbb R$ is an arbitrary
real-linear functional and $c\in\mathbb R$.
Conversely, every such pair satisfies the homogeneous identity.
These choices give six independent real parameters, so the
coefficient matrix has rank $50-6=44$.

Since a particular solution exists, the solutions for
$A_{\gamma,\mu}$ and $B_{\gamma,\mu}$ form an affine space of
dimension six. The constant and linear terms determine
$k$ and $N_{\gamma,\mu}$, respectively.
\end{proof}

The same system also has an invariant affine plane on which all
solutions can be written explicitly.

\begin{proposition}[Invariant affine plane]
\label{prop:hyperbolic-invariant-plane}
Let $0<\mu<1$, $0\leq\gamma\leq1$, and $m=\mu\gamma$. Set
\[
 \xi=\frac{1}{2(1+m)},\qquad
 \zeta=\frac{2m^2+4m+3}{4(1+m)(1+2m)}.
\]
The affine plane
\[
 \mathcal P_{\mu,\gamma}
 =\{p\in\mathbb R^5:p_0=\xi,\ p_1=0,\ p_2=\zeta\}
\]
is invariant under \eqref{eq:flagship-switching-field}.
On this plane, put
\[
 U=p_3,\qquad W=p_4+\frac{1+2m}{3\mu}.
\]
Then
\[
 \dot U=\frac{3\mu}{2(1+m)}W,\qquad
 \dot W=\frac{3\gamma}{2(1+m)}U.
\]
For $\gamma>0$, let
$\lambda=3\sqrt{\mu\gamma}/(2(1+\mu\gamma))$.
For arbitrary $U_0=U(0)$ and $W_0=W(0)$, the solution is
\[
\begin{aligned}
 U(t)&=U_0\cosh(\lambda t)
       +\sqrt{\frac{\mu}{\gamma}}\,W_0\sinh(\lambda t),\\
 W(t)&=W_0\cosh(\lambda t)
       +\sqrt{\frac{\gamma}{\mu}}\,U_0\sinh(\lambda t).
\end{aligned}
\]
At $\gamma=0$, it is instead
$W(t)=W_0$ and $U(t)=U_0+\tfrac32\mu W_0t$.
All these solutions exist for every $t\in\mathbb R$.
\end{proposition}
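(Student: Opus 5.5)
The plan is a direct computation in three steps: show that the first three components of the vector field \eqref{eq:flagship-switching-field} vanish identically on $\mathcal P_{\mu,\gamma}$, deduce invariance from uniqueness, and then solve the linear system that remains for $(p_3,p_4)$.

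\emph{Vanishing on the plane.} Substitute $p_0=\xi$, $p_1=0$, $p_2=\zeta$ and keep $p_3,p_4$ free, writing $m=\mu\gamma$.
\begin{itemize}
\item Clearly $\dot p_2=2p_0p_1=0$.
\item In $\dot p_0$ every term containing $p_1$ drops out, leaving $p_3\bigl(2(1+m)\xi-1\bigr)=0$ by the choice $\xi=1/(2(1+m))$.
\item For $\dot p_1$ we need $\zeta(1+2m\xi)=(1+m)\xi+\xi^2$. Here $1+2m\xi=(1+2m)/(1+m)$, so the left side is $(2m^2+4m+3)/(4(1+m)^2)$. The right side is $\tfrac12+\tfrac1{4(1+m)^2}=(2(1+m)^2+1)/(4(1+m)^2)$. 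These agree, which is exactly what fixes $\zeta$.
\end{itemize}

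\emph{Invariance.} Take initial data on the plane. Solve the two-dimensional system obtained by freezing $p_0,p_1,p_2$ at $\xi,0,\zeta$. The resulting curve solves the full system \eqref{eq:flagship-switching-field}, because the first three components of the field vanish along it. By uniqueness for the polynomial vector field, it is the solution through that point, so $\mathcal P_{\mu,\gamma}$ is invariant.

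\emph{Reduced system.} On the plane,
\[
\dot p_3=1-2(1+m)\xi^2+3\mu\xi p_4=\tfrac{1+2m}{2(1+m)}+\tfrac{3\mu}{2(1+m)}p_4=\tfrac{3\mu}{2(1+m)}W,
\]
using $\mu>0$ to define $W=p_4+\frac{1+2m}{3\mu}$. Also $\dot W=\dot p_4=3\gamma\xi p_3=\tfrac{3\gamma}{2(1+m)}U$.

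\emph{Explicit solution.} The system is linear with constant coefficients. For $\gamma>0$ the product of the two coefficients is $\lambda^2=9\mu\gamma/(4(1+m)^2)$. The stated hyperbolic formulas can be checked by differentiation, using the identities $\lambda\sqrt{\mu/\gamma}=3\mu/(2(1+m))$ and $\lambda\sqrt{\gamma/\mu}=3\gamma/(2(1+m))$, together with $U(0)=U_0$ and $W(0)=W_0$.

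For $\gamma=0$ we have $m=0$, so $\dot W=0$ and $\dot U=\tfrac32\mu W_0$. This gives $W\equiv W_0$ and $U(t)=U_0+\tfrac32\mu W_0 t$.

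In both cases the solutions are entire in $t$, so they exist for all $t\in\mathbb R$.

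The only real obstacle is the algebraic identity behind $\dot p_1=0$; everything else is immediate.
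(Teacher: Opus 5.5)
Your proof is correct and follows the paper's own argument. You check that the first three components vanish on $\mathcal P_{\mu,\gamma}$, including the identity $\zeta(1+2m\xi)=(1+m)\xi+\xi^2$ that fixes $\zeta$, then reduce to the linear system for $(U,W)$ and solve it; your only small departures are spelling out invariance via uniqueness and checking the hyperbolic formulas by differentiation, where the paper instead notes that the coefficient matrix squares to $\lambda^2 I_2$.
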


\begin{proof}
Write $a=1+m$. On $\mathcal P_{\mu,\gamma}$, the first three
equations of \eqref{eq:flagship-switching-field} give
\[
 \dot p_0=(-1+2a\xi)p_3=0,\qquad
 \dot p_1=(1+2m\xi)\zeta-a\xi-\xi^2=0,\qquad
 \dot p_2=0.
\]
Thus the plane is invariant. The remaining equations become
\[
 \dot p_3=\frac{1+2m+3\mu p_4}{2a},\qquad
 \dot p_4=\frac{3\gamma}{2a}p_3,
\]
which give the stated system for $U,W$. For $\gamma>0$, its
coefficient matrix squares to $\lambda^2I_2$, giving the
hyperbolic-function formulas. The case $\gamma=0$ follows by
direct integration.
\end{proof}

The following example demonstrates this phenomenon.

\begin{example} 
 At $\mu=\tfrac12$ and $\gamma=1$, the initial point
$(\tfrac13,0,\tfrac{11}{24},0,0)$ gives
\[
 p(t)=\frac13+\frac{11}{24}j
 +\frac{2\sqrt2}{3}\sinh\!\left(\frac{t}{\sqrt2}\right)k
 +\frac43\left[\cosh\!\left(\frac{t}{\sqrt2}\right)-1\right]\ell.
\]
This solution grows exponentially as $t\to+\infty$, whereas
\eqref{eq:flagship-orbit-main} is periodic at the same parameter
values.
\end{example}

\begin{figure}[t]
\centering
\begin{subfigure}[t]{0.48\textwidth}
\centering
\includegraphics[width=\textwidth]{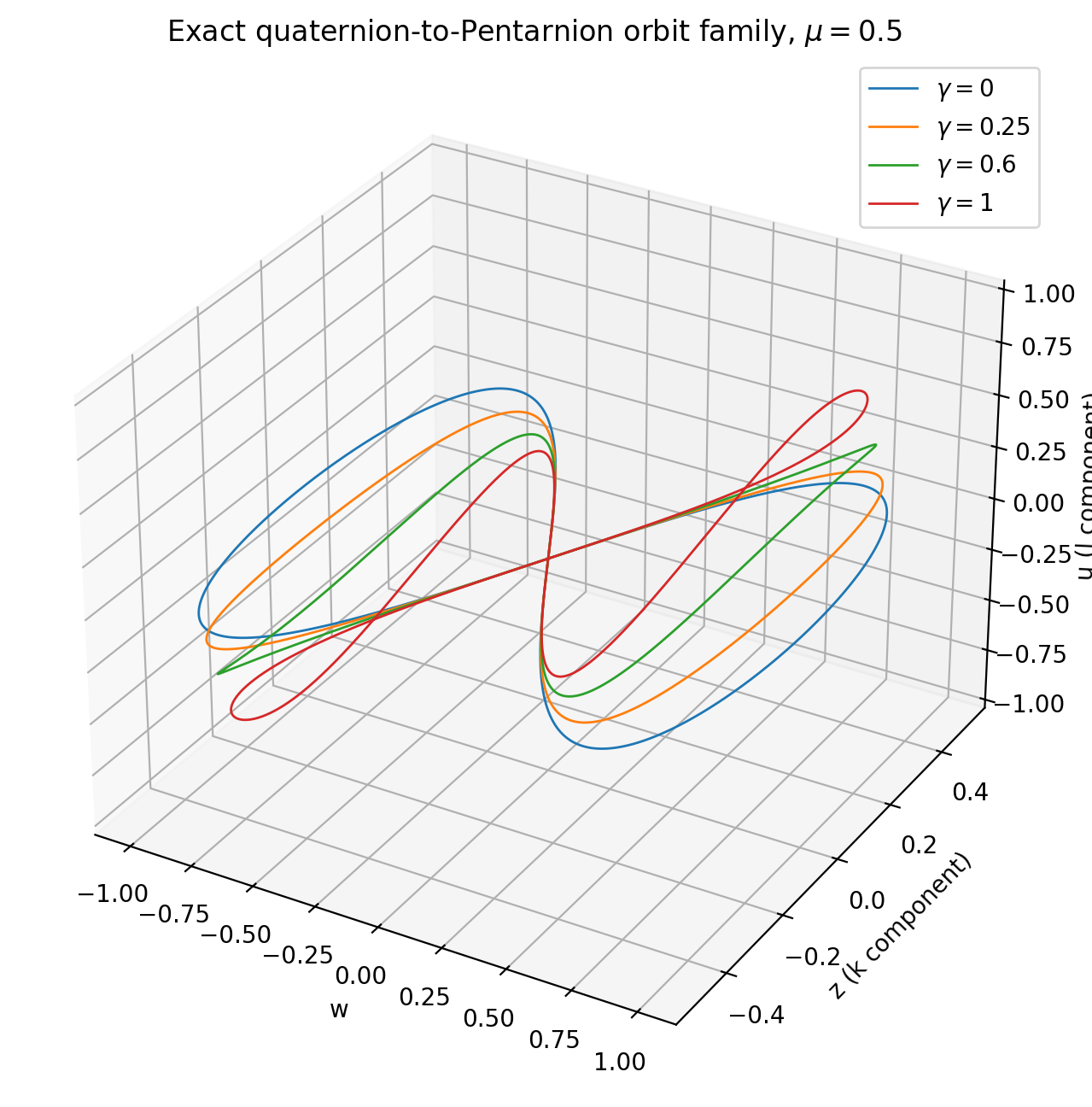}
\caption{Exact orbit family in a three-coordinate projection.}
\end{subfigure}\hfill
\begin{subfigure}[t]{0.48\textwidth}
\centering
\includegraphics[width=\textwidth]{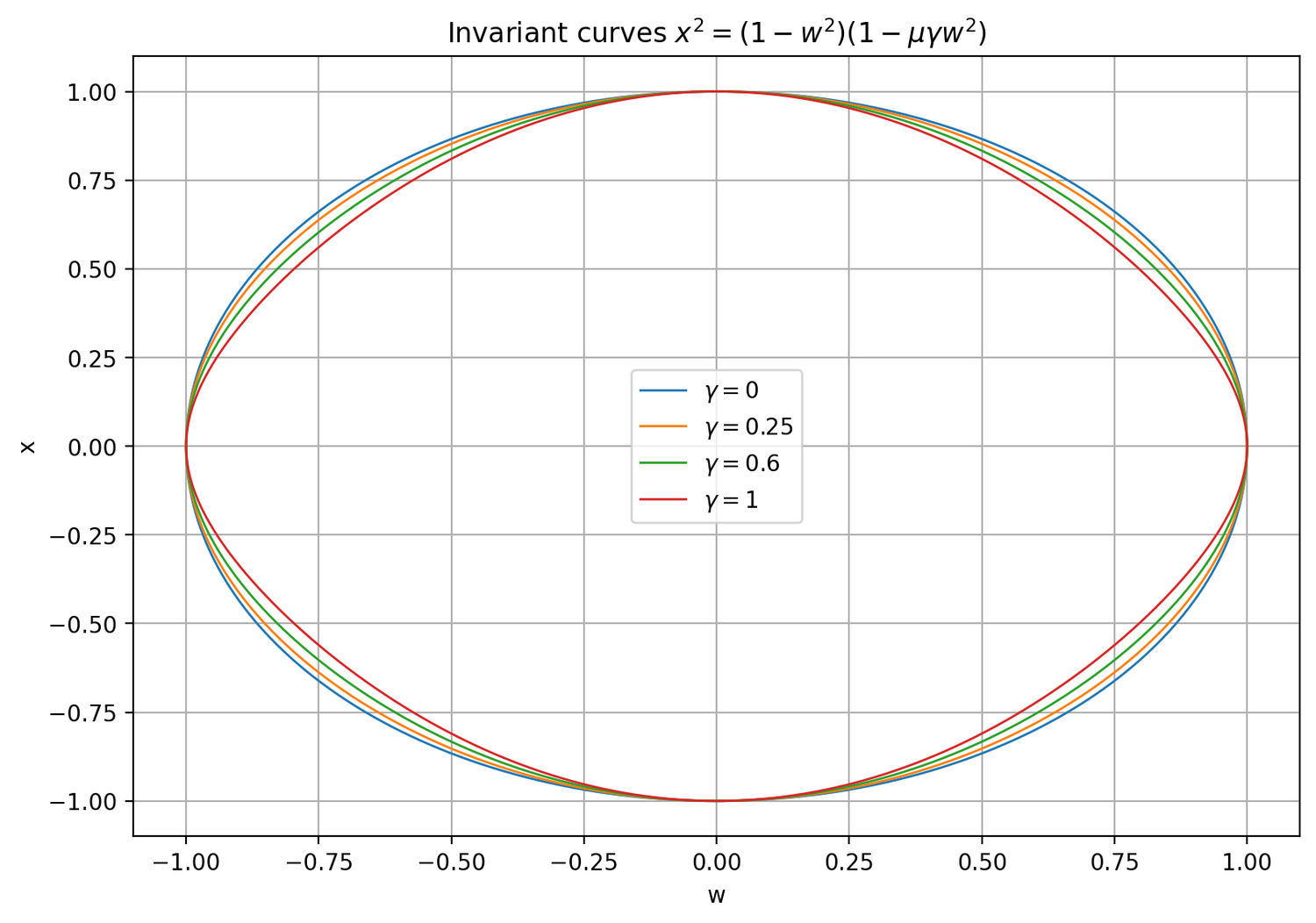}
\caption{Conic-to-elliptic transition.}
\end{subfigure}

\medskip
\begin{subfigure}[t]{0.48\textwidth}
\centering
\includegraphics[width=\textwidth]{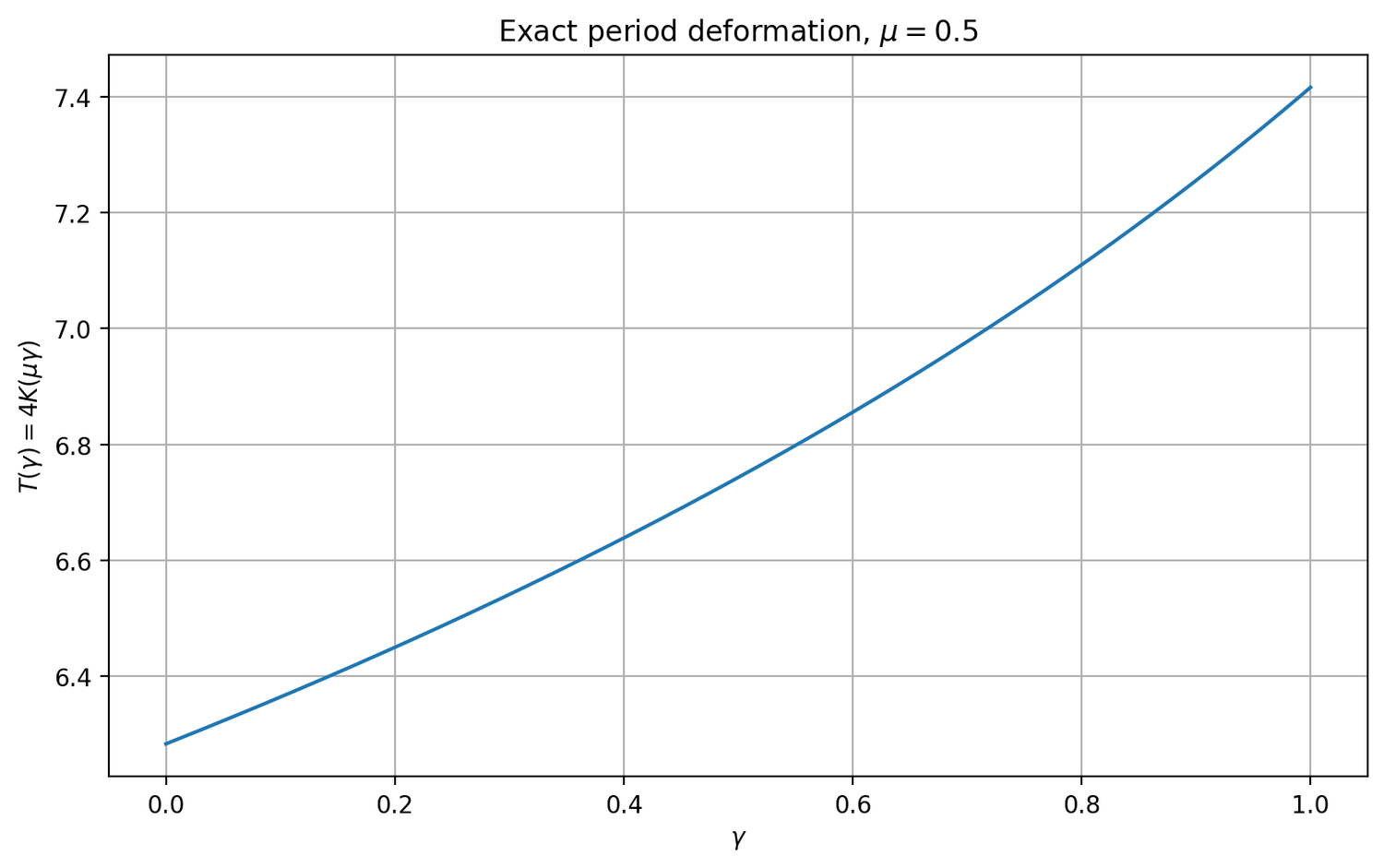}
\caption{Exact period law for $\mu=1/2$.}
\end{subfigure}\hfill
\begin{subfigure}[t]{0.48\textwidth}
\centering
\includegraphics[width=\textwidth]{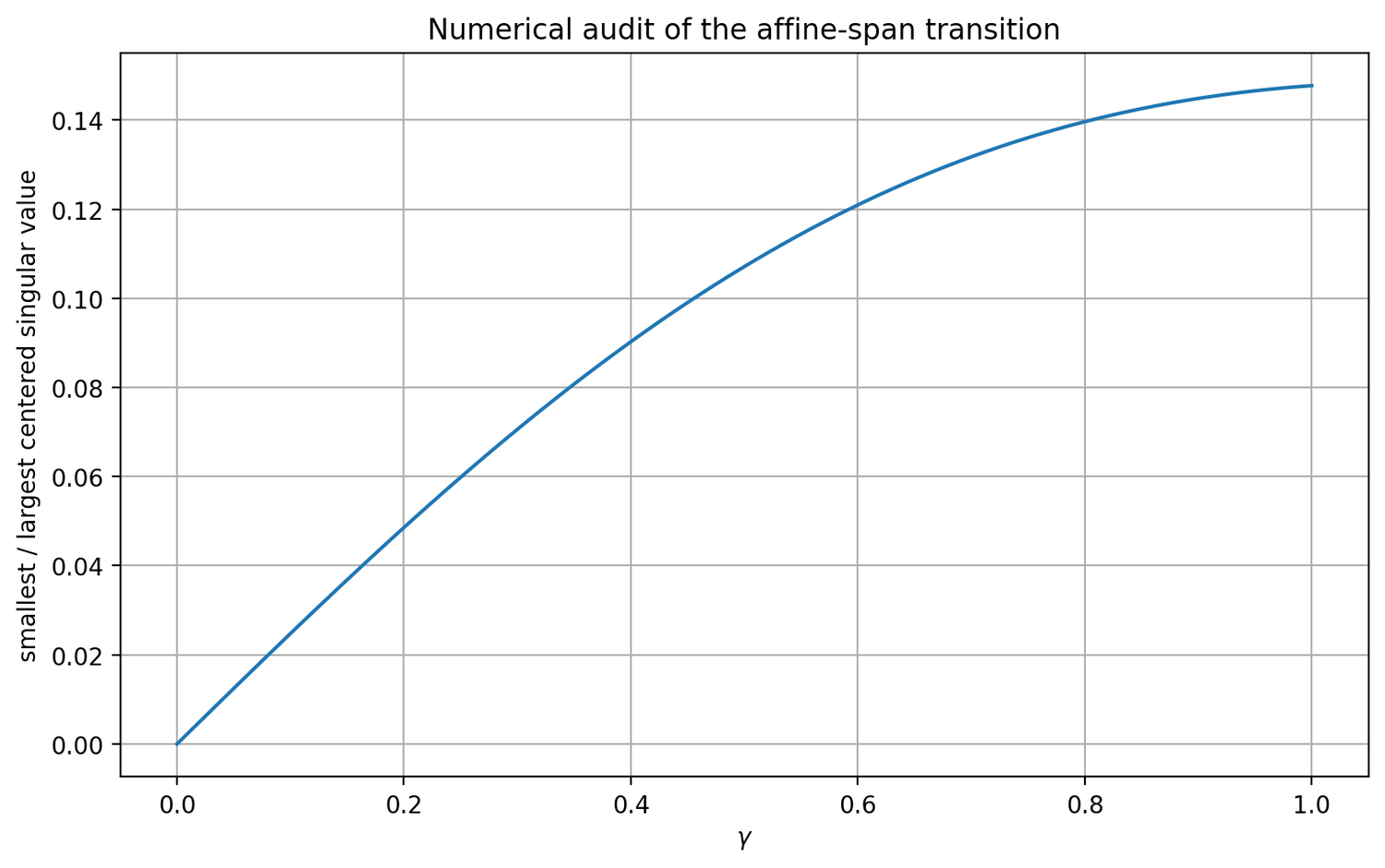}
\caption{Numerical audit of the affine-span jump.}
\end{subfigure}
\caption{
Exact solutions \eqref{eq:flagship-orbit-main} for $\mu=1/2$,
with plotting coordinates
$(w,x,y,z,u)=(p_0,p_1,p_2,p_3,p_4)$.
(a) Orbits over one period in $(w,z,u)$ coordinates for
$\gamma=0,0.25,0.6,1$.
(b) The same orbits in $(w,x)$ coordinates, satisfying
$x^2=(1-w^2)(1-\gamma w^2/2)$ with $|w|\leq1$.
(c) The period $T(\gamma)=4K(\gamma/2)$ versus $\gamma$.
(d) Ratio of the smallest to largest singular value of the
matrix of 1600 equally spaced orbit samples over one period,
after subtracting each coordinate's mean, versus $\gamma$.
The affine dimensions, four at $\gamma=0$ and five for
$\gamma>0$, are proved in
Theorem~\ref{thm:integrable-geometric-transition}.}
\label{fig:exact-integrable-panel}
\end{figure}

\section[A Riccati equation in Agamma]{A Riccati equation in $\mathcal A_\gamma$}
\label{sec:riccati-dynamics}

In \cite{PoshtkohiPumplunSubmitted}\ we investigated a quadratic differential equation in the algebra $\Pent$ for different bracketings. Now we keep the bracketing fixed and vary $\gamma$ (i.e., the algebra $\mathcal A_\gamma$); noting that both the multiplication and the
coefficients of the equations depend on $\gamma$. We first determine analytically when the origin loses stability, and
then study the subsequent dynamics numerically.

\subsection{Model, variational equation and  instability at the origin}

Let
\begin{align}
 A_\gamma&=-\frac12-2i+\frac12j-k-\frac\gamma2\ell,\label{eq:A-gamma-riccati}\\
 B_\gamma&=1+2j-\frac\gamma2\ell,\label{eq:B-gamma-riccati}\\
 p_\gamma(0)&=\frac12(1-i-j+k-\gamma\ell).
 \label{eq:p0-gamma-riccati}
\end{align}
These choices satisfy $A_\gamma=S_\gamma A_1$, $B_\gamma=S_\gamma B_1$, and $p_\gamma(0)=S_\gamma p_1(0)$, with $S_\gamma$ defined in \cref{eq:weighted-state-main}. Consider

\begin{equation}
 \dot p=A_\gamma\star_\gamma p
 +B_\gamma\star_\gamma(p\star_\gamma p).
 \label{eq:riccati-main}
\end{equation}
For $\gamma=1$ this is the equation in $\Pent$ from \cite{PoshtkohiPumplunSubmitted}.

For $\gamma=0$, all elements lie in
the quaternion subalgebra $\mathbb H=\Span\{1,i,j,k\}$ of $\mathcal A_0$, and since $\mathbb H$ is closed under $\star_0$,
the solution stays in $\mathbb H$. Note that varying $\gamma$ changes both the multiplication and the
coefficients.

The  component equations are given in Appendix~\ref{app:coordinate-matrices}.

Write $F_\gamma(p)=A_\gamma\star_\gamma p+B_\gamma\star_\gamma(p\star_\gamma p)$. Differentiating with respect to $p$, with $A_\gamma$, $B_\gamma$, and $\gamma$ held fixed, and using the bilinearity of $\star_\gamma$, gives
\begin{equation}
 D_pF_\gamma(p)[h]
 =A_\gamma\star_\gamma h
 +B_\gamma\star_\gamma(p\star_\gamma h+h\star_\gamma p).
 \label{eq:riccati-state-derivative}
\end{equation}
Thus a perturbation $h(t)$ along a solution $p(t)$ satisfies $\dot h=D_pF_\gamma(p(t))h$.
\begin{equation}
 \dot h=A_\gamma\star_\gamma h
 +B_\gamma\star_\gamma
 \bigl(p\star_\gamma h+h\star_\gamma p\bigr).
 \label{eq:riccati-tangent}
\end{equation}
Equivalently,
\begin{equation}
 D_pF_\gamma(p)=L_{A_\gamma}^{\gamma}
   +L_{B_\gamma}^{\gamma}(L_p^\gamma+R_p^\gamma),
 \label{eq:riccati-jacobian-matrix}
\end{equation}
where products of linear maps denote composition.

For prescribed $\gamma(t)$, the same state derivative is evaluated at the instantaneous value of the parameter.

For a nonconstant periodic solution $p_*(t)$ with period $T$,  let $\Phi(t)$ solve
$\dot\Phi=D_pF_\gamma(p_*(t))\Phi$, $\Phi(0)=I_5$.

The eigenvalues of the monodromy matrix $\Phi(T)$ are the Floquet multipliers.
 One multiplier is $1$, corresponding to the direction along the orbit. If the other four, called \emph{transverse multipliers}, have modulus less than $1$, the orbit is orbitally asymptotically stable.
These are the multipliers used in the continuation diagnostics below.

At the origin, the quadratic term vanishes and
\[
 DF_\gamma(0)=L_{A_\gamma}^{\gamma}.
\]
Substituting $A_\gamma$ from \eqref{eq:A-gamma-riccati}
into the multiplication matrix
\eqref{eq:left-multiplication-matrix}, an exact symbolic
calculation (see Appendix~\ref{app:reproducibility} for the
verification code) shows that the characteristic polynomial
$\chi_\gamma(\lambda)=\det(\lambda I_5-L_{A_\gamma}^{\gamma})$
factors as
\begin{equation}
 \chi_\gamma(\lambda)
 =\left(\lambda^2+\lambda+\frac{22+\gamma^2}{4}\right)
\left(\lambda^3+\frac32\lambda^2+b_\gamma\lambda+c_\gamma\right),
 \label{eq:riccati-characteristic}
\end{equation}
where
\begin{equation}
 b_\gamma=
 \frac{3(16+\gamma^2-\gamma^4)}{8(1+\gamma^2)},
 \qquad
 c_\gamma=
 \frac{44-\gamma^2-3\gamma^4
 +\sqrt2\gamma(\gamma^4+\gamma^2+194)}{16(1+\gamma^2)}.
 \label{eq:riccati-bc}
\end{equation}
The quadratic factor has roots  $\lambda=\frac12(-1\pm i\sqrt{21+\gamma^2})$ so that  $\operatorname{Re}\lambda=-\frac12<0$. The coefficients $b_\gamma$ and $c_\gamma$ are positive on $[0,1]$, so by the Routh-Hurwitz criterion all the roots of the cubic factor have negative real part if and only if $\frac32b_\gamma-c_\gamma>0$.
The stability boundary hence is $\tfrac32b_\gamma-c_\gamma=0$, or equivalently,
\begin{equation}
 H(\gamma)=100+10\gamma^2-6\gamma^4
 -\sqrt2(\gamma^5+\gamma^3+194\gamma)=0.
 \label{eq:hopf-condition}
\end{equation}

\begin{theorem}[Unique origin instability]
\label{thm:riccati-hopf}
Equation~\eqref{eq:hopf-condition} has a unique root $\gamma_H\in[0,1]$, numerically given by
\begin{equation}
 \gamma_H=0.36874617836684\ldots.
 \label{eq:gamma-H}
\end{equation}
Let $\lambda_1(\gamma),\dots,\lambda_5(\gamma)$ be the roots of $\chi_\gamma$. Then
 $\max_r\operatorname{Re}\lambda_r(\gamma)<0$ for $0\leq\gamma<\gamma_H$, and $\max_r\operatorname{Re}\lambda_r(\gamma)>0$ for
 $\gamma_H<\gamma\le1$.
 At $\gamma=\gamma_H$, the roots are $\pm i\omega_H$ with $\omega_H=\sqrt{b_{\gamma_H}}$,
 $-\tfrac32$ and $\tfrac12(-1\pm i\sqrt{21+\gamma_H^2})$;
and $\lambda(\gamma)$ with $\lambda(\gamma_H)=i\omega_H$ satisfies
$ \left.\frac{\dd\operatorname{Re}\lambda}{\dd\gamma}\right|_{\gamma_H}>0$.
\end{theorem}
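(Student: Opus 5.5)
We take the factorisation \eqref{eq:riccati-characteristic} of $\chi_\gamma$ and the formulas \eqref{eq:riccati-bc} as given. Then everything reduces to studying the real function $H$ on $[0,1]$ together with the cubic factor $q_\gamma(\lambda)=\lambda^3+\tfrac32\lambda^2+b_\gamma\lambda+c_\gamma$. The quadratic factor always contributes roots with real part $-\tfrac12$. These never cross the imaginary axis, so only the cubic matters.

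\textbf{Uniqueness of the root.} First I check that $\tfrac32 b_\gamma-c_\gamma=H(\gamma)/(16(1+\gamma^2))$. Indeed,
\[
24(16+\gamma^2-\gamma^4)-\bigl(44-\gamma^2-3\gamma^4+\sqrt2\gamma(\gamma^4+\gamma^2+194)\bigr)
\]
equals $H(\gamma)$ after collecting terms. Hence the sign of $\tfrac32b_\gamma-c_\gamma$ is the sign of $H$. Next, $H(0)=100>0$ and $H(1)=104-196\sqrt2<0$, so a root exists by the intermediate value theorem. For uniqueness I show that $H$ is strictly decreasing on $[0,1]$. We have
\[
H'(\gamma)=20\gamma-24\gamma^3-\sqrt2(5\gamma^4+3\gamma^2+194)\le 20-194\sqrt2<0 .
\]
Finally, $\gamma_H$ is located numerically by bisection or Newton's method, with a rigorous sign check of $H$ at rational endpoints bracketing the stated decimals.

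\textbf{Stability on each side.} The positivity of $b_\gamma$ and $c_\gamma$ on $[0,1]$ is already noted; I would verify it quickly. For $b_\gamma$ it holds because $16+\gamma^2-\gamma^4\ge16$. For $c_\gamma$ it holds because $44-\gamma^2-3\gamma^4\ge40$. By Routh--Hurwitz for cubics, all roots of $q_\gamma$ lie in the open left half-plane if and only if $\tfrac32b_\gamma>c_\gamma$, that is, if and only if $H(\gamma)>0$. This gives $\max\operatorname{Re}\lambda_r<0$ for $\gamma<\gamma_H$. For $\gamma>\gamma_H$ we have $H<0$, so the Hurwitz condition fails. Since $c_\gamma>0$, the cubic has no zero root. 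Its roots vary continuously in $\gamma$, and the only possible crossing of the imaginary axis is through a purely imaginary pair. Such a pair $\pm i\omega$ forces $c=\tfrac32 b$ (see the next paragraph), which happens only at $\gamma_H$. Therefore some root has positive real part for every $\gamma\in(\gamma_H,1]$. To confirm the direction, one can also note that the Hurwitz determinant $\tfrac32b-c$ being negative with all coefficients positive yields exactly two roots in the right half-plane.

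\textbf{Roots at $\gamma_H$ and transversality.} When $c=\tfrac32 b$, the cubic factors as $q(\lambda)=(\lambda+\tfrac32)(\lambda^2+b)$. This gives the roots $-\tfrac32$ and $\pm i\sqrt{b_{\gamma_H}}$. Conversely, substituting $\lambda=i\omega$ into $q$ and separating real and imaginary parts gives $\omega^2=b$ and $\tfrac32\omega^2=c$. This justifies the claim used above that imaginary roots occur only where $H=0$.

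For transversality I differentiate $q_\gamma(\lambda(\gamma))=0$ implicitly:
\[
\lambda'=-\frac{b'\lambda+c'}{3\lambda^2+3\lambda+b}.
\]
At $\lambda=i\omega$ the denominator is $-2b+3i\omega$, and the numerator is $-(c'+ib'\omega)$. Taking real parts gives
\[
\operatorname{Re}\lambda'=\frac{3b'\omega^2-2bc'}{4b^2+9\omega^2}\cdot(-1)\cdot(-1)\,,
\]
which after simplification with $\omega^2=b$ is a positive multiple of $c'-\tfrac32 b'$. Since $H$ is strictly decreasing and $H=16(1+\gamma^2)(\tfrac32b-c)$ with $H(\gamma_H)=0$, we get
\[
\frac{\dd}{\dd\gamma}\Bigl(\tfrac32b-c\Bigr)\Big|_{\gamma_H}=\frac{H'(\gamma_H)}{16(1+\gamma_H^2)}<0 ,
\]
so $c'-\tfrac32b'>0$ at $\gamma_H$, as needed. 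The main obstacle is getting the sign in this implicit-differentiation computation right, including the positive denominator $4b^2+9b$. I would double-check it against the general formula $\operatorname{sign}\operatorname{Re}\lambda'=-\operatorname{sign}\frac{\dd}{\dd\gamma}(a_2a_1-a_0)$ for monic cubics $\lambda^3+a_2\lambda^2+a_1\lambda+a_0$, here with $a_2=\tfrac32$ constant.
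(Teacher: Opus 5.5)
Your proposal is essentially the paper's proof. It uses the same identity $16(1+\gamma^2)(\tfrac32b_\gamma-c_\gamma)=H(\gamma)$, the same bound $H'\le 20-194\sqrt2$, Routh--Hurwitz on the cubic, the factorisation $(\lambda+\tfrac32)(\lambda^2+b_{\gamma_H})$ and implicit differentiation at $i\omega_H$; your explicit exclusion of imaginary-axis roots for $\gamma>\gamma_H$ is, if anything, slightly more careful than the paper's bare appeal to Routh--Hurwitz.

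Two arithmetic slips need fixing. First, the coefficient in the identity is $9=\tfrac32\cdot16\cdot\tfrac38$, not $24$; with $24$ the constant term is $340$ rather than $100$. Second, with your numerator $-(c'+ib'\omega)$ the real part is $\operatorname{Re}\lambda'=(2bc'-3b'\omega^2)/(4b^2+9\omega^2)=(2c'-3b')/(4b+9)$. So the factor $(-1)\cdot(-1)$ in your display gives the wrong sign, although your verbal conclusion (a positive multiple of $c'-\tfrac32b'$) is correct.
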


In particular, the origin is asymptotically stable for $0\leq\gamma<\gamma_H$ and unstable for $\gamma_H<\gamma\leq 1$.

\begin{proof}
Set $D(\gamma)=\tfrac32b_\gamma-c_\gamma$.
Substituting \eqref{eq:riccati-bc} gives
\[
16(1+\gamma^2)D(\gamma)
=9(16+\gamma^2-\gamma^4)-(44-\gamma^2-3\gamma^4)
-\sqrt2\,\gamma(\gamma^4+\gamma^2+194)
=H(\gamma).
\]
Thus $D(\gamma)=H(\gamma)/(16(1+\gamma^2))$.
Moreover, $H(0)=100>0$, $H(1)=104-196\sqrt2<0$, and
\[
H'(\gamma)=20\gamma-24\gamma^3
-\sqrt2(5\gamma^4+3\gamma^2+194)
\leq20-194\sqrt2<0
\]
on $[0,1]$.
Hence $H$ has exactly one zero $\gamma_H\in(0,1)$.
Since $16(1+\gamma^2)>0$, the same holds for $D$, with
$D>0$ on $[0,\gamma_H)$ and $D<0$ on $(\gamma_H,1]$.

The quadratic factor of $\chi_\gamma$ has the roots
$\tfrac12(-1\pm i\sqrt{21+\gamma^2})$, with real part $-\tfrac12$.
For the cubic factor
$\lambda^3+\tfrac32\lambda^2+b_\gamma\lambda+c_\gamma$,
we have $b_\gamma,c_\gamma>0$ on $[0,1]$, so by the
Routh--Hurwitz criterion all its roots have negative real part
if and only if $\tfrac32b_\gamma>c_\gamma$, i.e.\
$D(\gamma)>0$; if $D(\gamma)<0$, the
cubic factor has a root with
positive real part.

At $D(\gamma_H)=0$, we have
$c_{\gamma_H}=\tfrac32b_{\gamma_H}$, and the cubic factors as
\[
(\lambda+\tfrac32)(\lambda^2+b_{\gamma_H}).
\]
Its roots are $-\tfrac32$ and $\pm i\omega_H$, with
$\omega_H=\sqrt{b_{\gamma_H}}>0$.
They differ from each other and from the roots of the quadratic
factor, which have real part $-\tfrac12$, so all five roots of
$\chi_{\gamma_H}$ are simple.

Since $i\omega_H$ is a simple root, it continues to a
differentiable root $\lambda(\gamma)$ of the cubic
$p(\lambda,\gamma)=\lambda^3+\tfrac32\lambda^2
+b_\gamma\lambda+c_\gamma$ near $\gamma_H$, and
\[
\lambda'
=-\frac{\partial_\gamma p}{\partial_\lambda p}
=-\frac{b_\gamma'\lambda+c_\gamma'}
{3\lambda^2+3\lambda+b_\gamma}.
\]
Write $b_H=b_{\gamma_H}$,
$b'_H=\left.\frac{\dd b_\gamma}{\dd\gamma}\right|_{\gamma_H}$
and
$c'_H=\left.\frac{\dd c_\gamma}{\dd\gamma}\right|_{\gamma_H}$.
At $\lambda=i\omega_H$, using $\omega_H^2=b_H$, we obtain
\[
\lambda'(\gamma_H)
=-\frac{c'_H+i\omega_Hb'_H}{-2b_H+3i\omega_H}.
\]
Multiplying numerator and denominator by
$-2b_H-3i\omega_H$ and taking real parts gives
\[
\operatorname{Re}\lambda'(\gamma_H)
=\frac{2b_Hc'_H-3\omega_H^2b'_H}{4b_H^2+9\omega_H^2}
=\frac{2c'_H-3b'_H}{4b_H+9}
=-\frac{D'(\gamma_H)}{2(b_H+9/4)},
\]
where the last equality uses
$D'(\gamma_H)=\tfrac32b'_H-c'_H$.

Since $H(\gamma_H)=0$, we have
\[
D'(\gamma_H)
=\frac{H'(\gamma_H)}{16(1+\gamma_H^2)}<0,
\]
hence
\[
\left.\frac{\dd\operatorname{Re}\lambda}{\dd\gamma}
\right|_{\gamma_H}
=-\frac{D'(\gamma_H)}{2(b_{\gamma_H}+9/4)}>0.
\]
This proves transversality and the direction of the crossing.
\end{proof}

A separate numerical calculation gives the crossing frequency
$\omega_H\approx2.306641614$ and the first Lyapunov coefficient
$l_1\approx-0.18325828$, with the convention stated in
Appendix~\ref{app:riccati-validation}.
The negative numerical value of $l_1$ is consistent with a
supercritical Hopf bifurcation and with the numerically
computed stable primary periodic branch shown in
Figure~\ref{fig:primary-periods}.
The stability argument above is analytical; the sign of $l_1$
has not been certified by interval arithmetic.

\subsection{Fixed $\gamma$: the primary period-doubling sequence}

For each fixed value of $\gamma$, we integrate Equation~\eqref{eq:riccati-main}
with the prescribed initial condition $p_\gamma(0)$. The largest Lyapunov exponent at $201$ values of $\gamma$, the local maxima of $p_3$
in Figure~\ref{fig:peak} on a finer grid of $\gamma$, and the numerical
continuation of periodic orbits are separate computations.
Their methods and horizons are specified in Appendix~\ref{app:numerics}.
Together they give numerical evidence for the sequence
\[
 \text{stable equilibrium}\ \longrightarrow\ \text{periodic motion}
 \ \longrightarrow\ \text{period doubling and irregular dynamics},
\]
followed by
intervals of periodic behaviour and further intervals of $\gamma$ with a
positive largest Lyapunov exponent.
These computations describe the
solutions starting at $p_\gamma(0)$, not all coexisting attractors.

\begin{figure}[t]
\centering
\begin{subfigure}[t]{.48\textwidth}
\centering
\includegraphics[width=\textwidth]{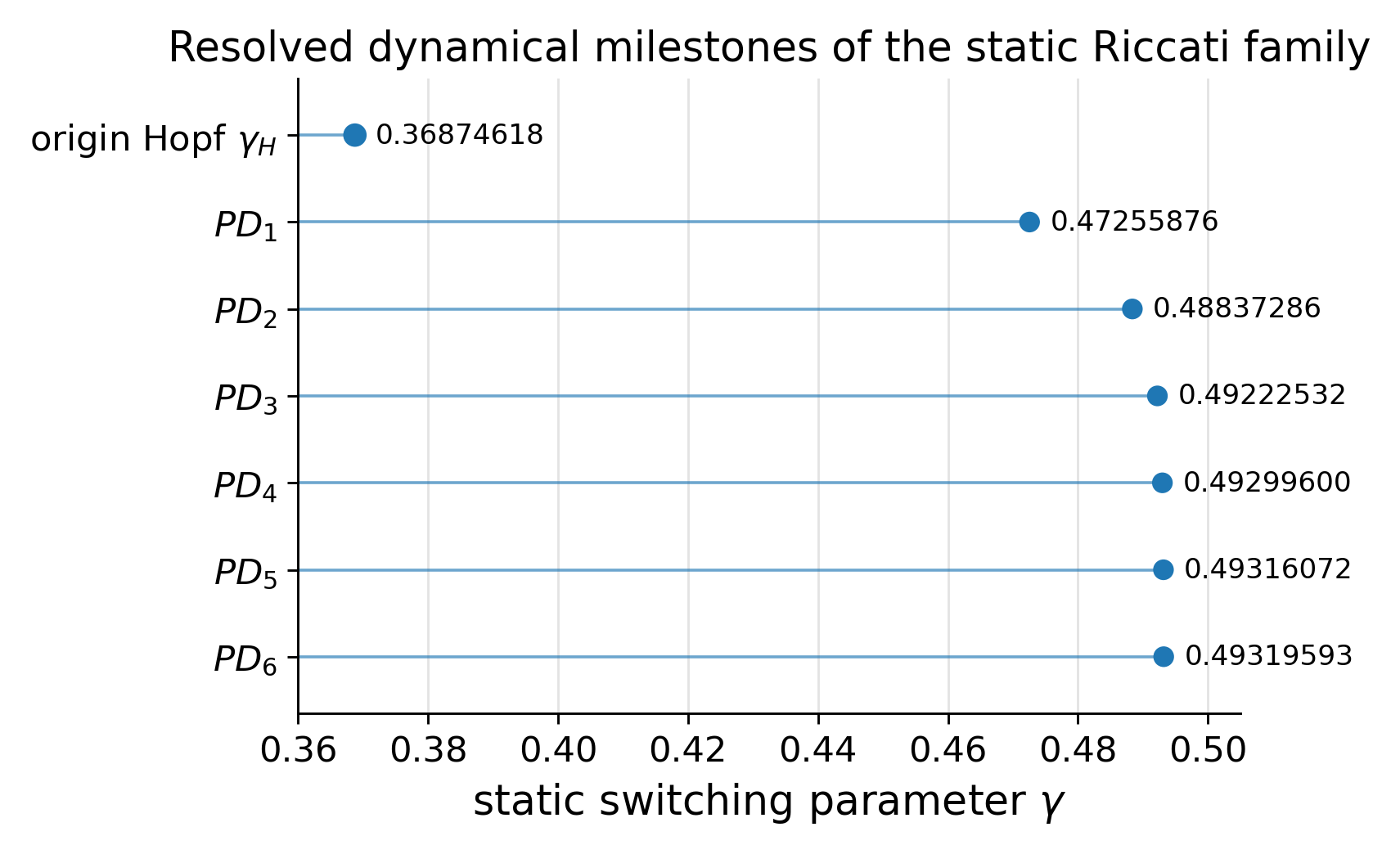}
\caption{Origin-instability threshold and six estimated period-doubling transitions.}
\label{fig:riccati-event-timeline}
\end{subfigure}
\hfill
\begin{subfigure}[t]{.48\textwidth}
\centering
\includegraphics[width=\textwidth]{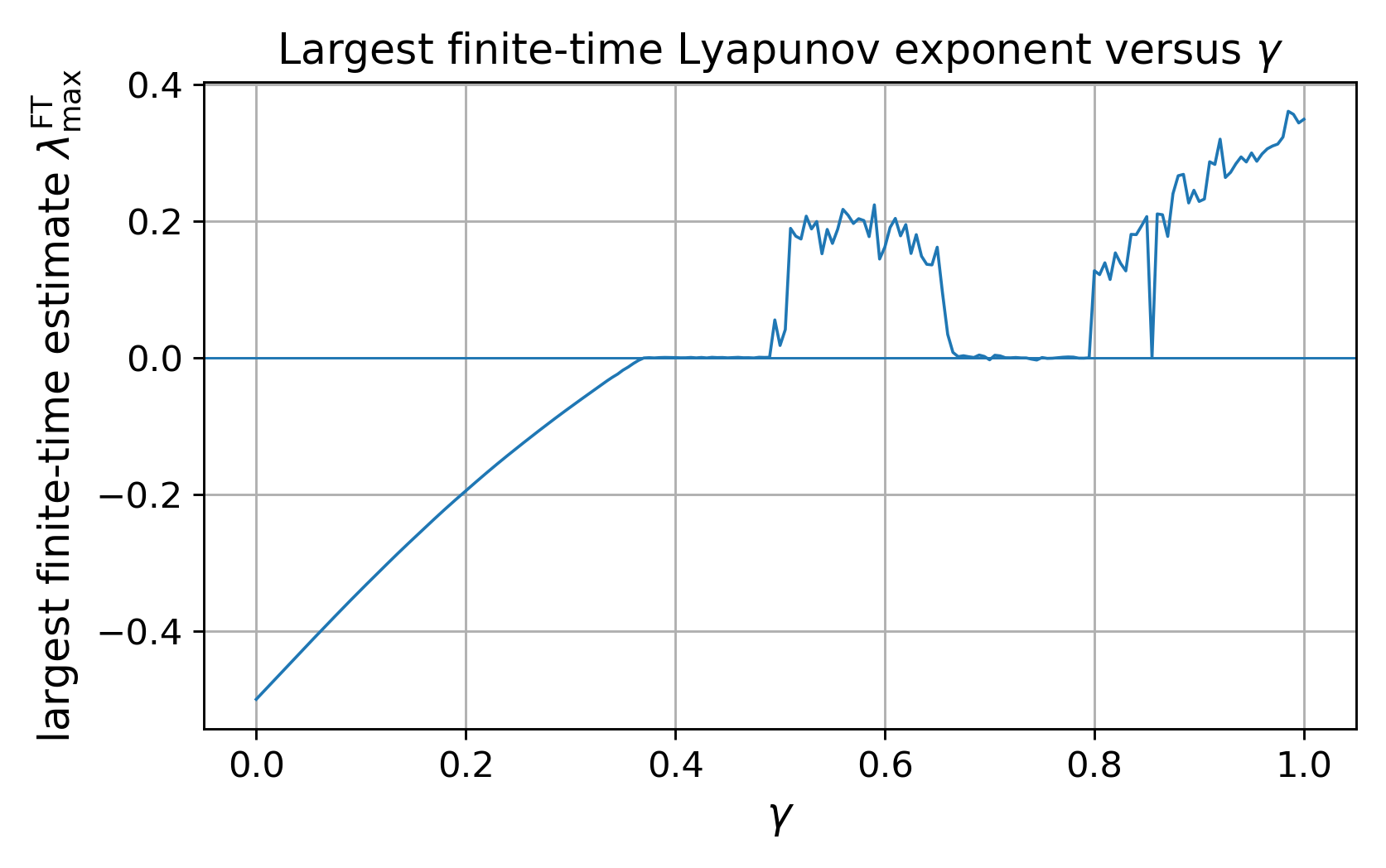}
\caption{Largest finite-time Lyapunov exponent estimate.}
\label{fig:riccati-lle}
\end{subfigure}
\caption{Fixed $\gamma$. (A) The threshold $\gamma_H$ of Theorem~\ref{thm:riccati-hopf} and six estimated
period-doubling values of $\gamma$. (B) Estimated largest Lyapunov exponent. 
The threshold $\gamma_H$ is characterised analytically; the
period-doubling values are obtained by interpolation and, like the exponents, are numerical estimates.
}
\end{figure}

\begin{figure}[t]
\centering
\begin{subfigure}[t]{.94\textwidth}
\centering
\includegraphics[width=\textwidth]{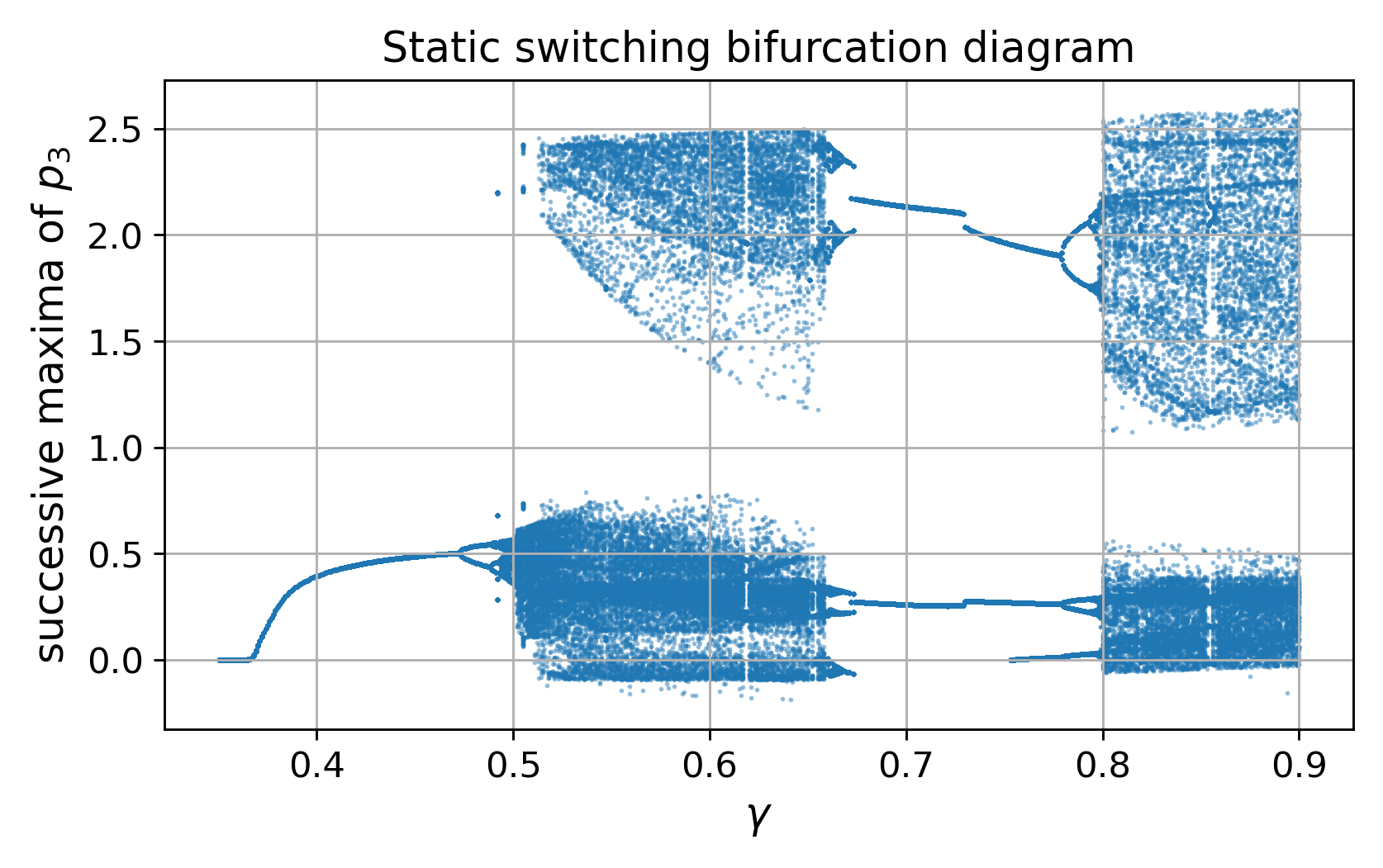}
\caption{Post-transient local maxima of $p_3$ on the global parameter grid.}
\label{fig:riccati-bifurcation}
\end{subfigure}
\caption{The local maxima of $p_3$  identify broad irregular and regular intervals. Its fixed grid does not resolve the narrowest period-doubling intervals; these are examined using continuation.}
\label{fig:peak}
\end{figure}

We denote the branches of periodic orbits obtained by
continuation by $P_1,P_2,P_4,\dots,P_{32}$, where $P_{2n}$ arises from $P_n$ by period doubling.
Six numerically detected crossings of a real transverse multiplier through $-1$ are consistent with
\begin{equation}
 P_1\to P_2\to P_4\to P_8\to P_{16}\to P_{32}\to P_{64}.
 \label{eq:primary-cascade}
\end{equation}
The last arrow denotes the crossing detected on the branch $P_{32}$; the $P_{64}$  branch has not been independently continued. The estimated transition parameters and  the periods of the orbits $P_1,\dots,P_{32}$ at the crossings
are shown in \cref{tab:primary-flip-estimates}.

\begin{table}[t]
\centering
\caption{Estimated  period-doubling parameters from adjacent Floquet calculations. The last column gives the separation of the sampled parameter values on either side of the selected multiplier crossing, not a certified error bound.}
\label{tab:primary-flip-estimates}
{\small
\begin{tabular}{lrrr}
\toprule
Parent & $\gamma$ estimate & parent
period & sample separation\\
\midrule
$P_1$ & 0.47255876 & 3.207809 & $2\times10^{-5}$\\
$P_2$ & 0.48837286 & 6.467492 & $1\times10^{-4}$\\
$P_4$ & 0.49222532 & 12.973525 & $5\times10^{-5}$\\
$P_8$ & 0.49299600 & 25.960095 & $2\times10^{-5}$\\
$P_{16}$ & 0.49316072 & 51.926332 & $5\times10^{-6}$\\
$P_{32}$ & 0.49319593 & 103.855145 & $1\times10^{-7}$\\
\bottomrule
\end{tabular}}
\end{table}

The transition parameters are estimated by secant interpolation of the computed real transverse multiplier nearest $-1$. This interpolation is not an independent shooting refinement, and the multiplier selection need not follow a single eigenvalue continuously. The late ratios of successive parameter intervals are close to Feigenbaum's constant $\delta\approx4.669$, with an extrapolated accumulation parameter $\gamma_\infty\approx0.4932055$. This is evidence of Feigenbaum-like numerical scaling, not a universality theorem or a certified accumulation point.

\begin{figure}[t]
\centering
\begin{subfigure}[t]{.48\textwidth}
\centering
\includegraphics[width=\textwidth]{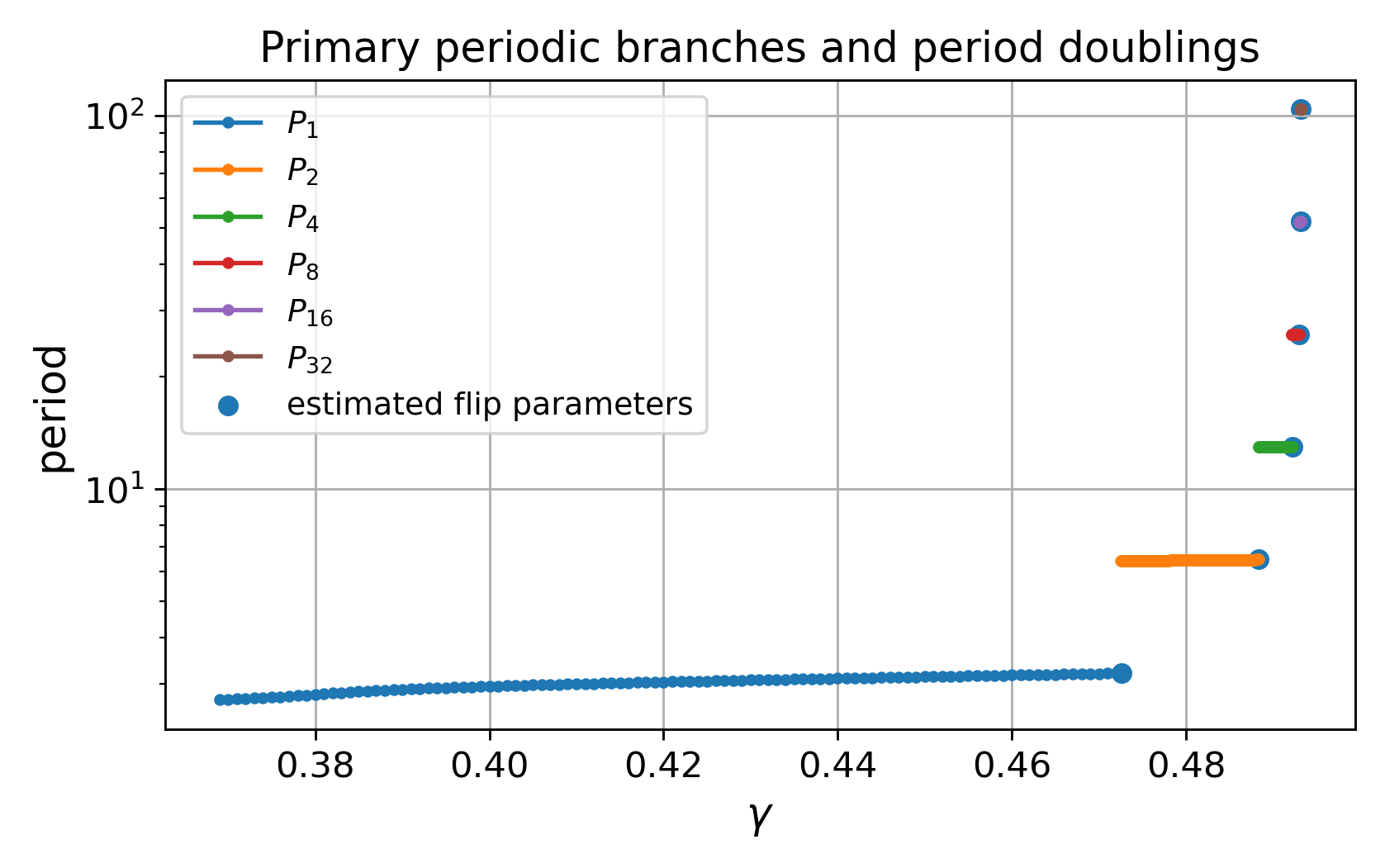}
\caption{Periods along the stable primary branches and estimated period-doubling parameters.}
\label{fig:primary-periods}
\end{subfigure}
\hfill
\begin{subfigure}[t]{.48\textwidth}
\centering
\includegraphics[width=\textwidth]{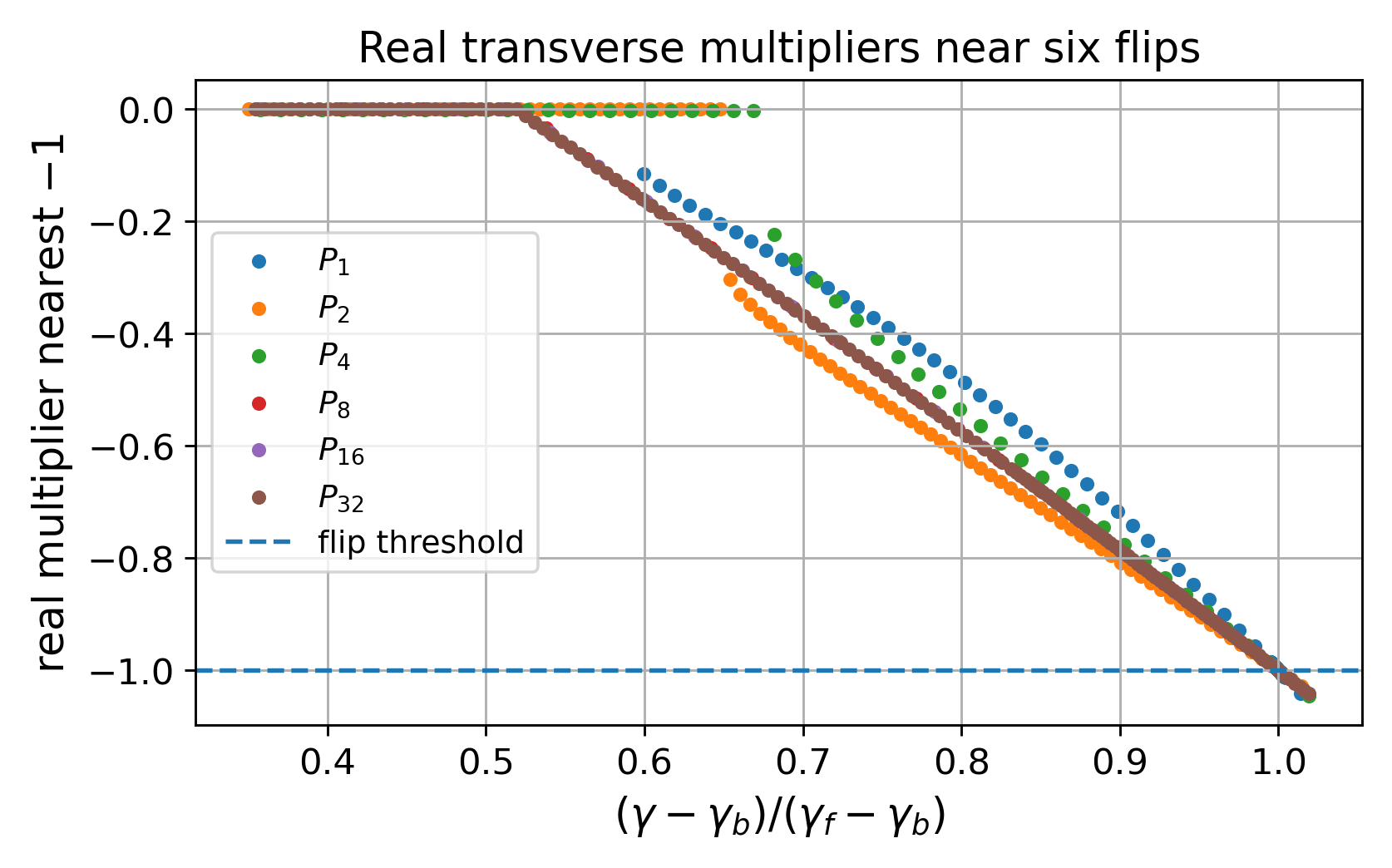}
\caption{At each sample, the real transverse multiplier nearest $-1$.}
\label{fig:primary-flips}
\end{subfigure}
\caption{Primary continuation diagnostics. In the multiplier panel, the horizontal coordinate is $(\gamma-\gamma_b)/(\gamma_f-\gamma_b)$, where $\gamma_b$ is the estimated value of $\gamma$ at which the branch arises
and $\gamma_f$  the  next estimated  period-doubling value. The plotted selection need not track a single eigenvalue continuously.}
\end{figure}

\begin{figure}[t]
\centering
\begin{subfigure}[t]{.90\textwidth}
\centering
\includegraphics[width=\textwidth]{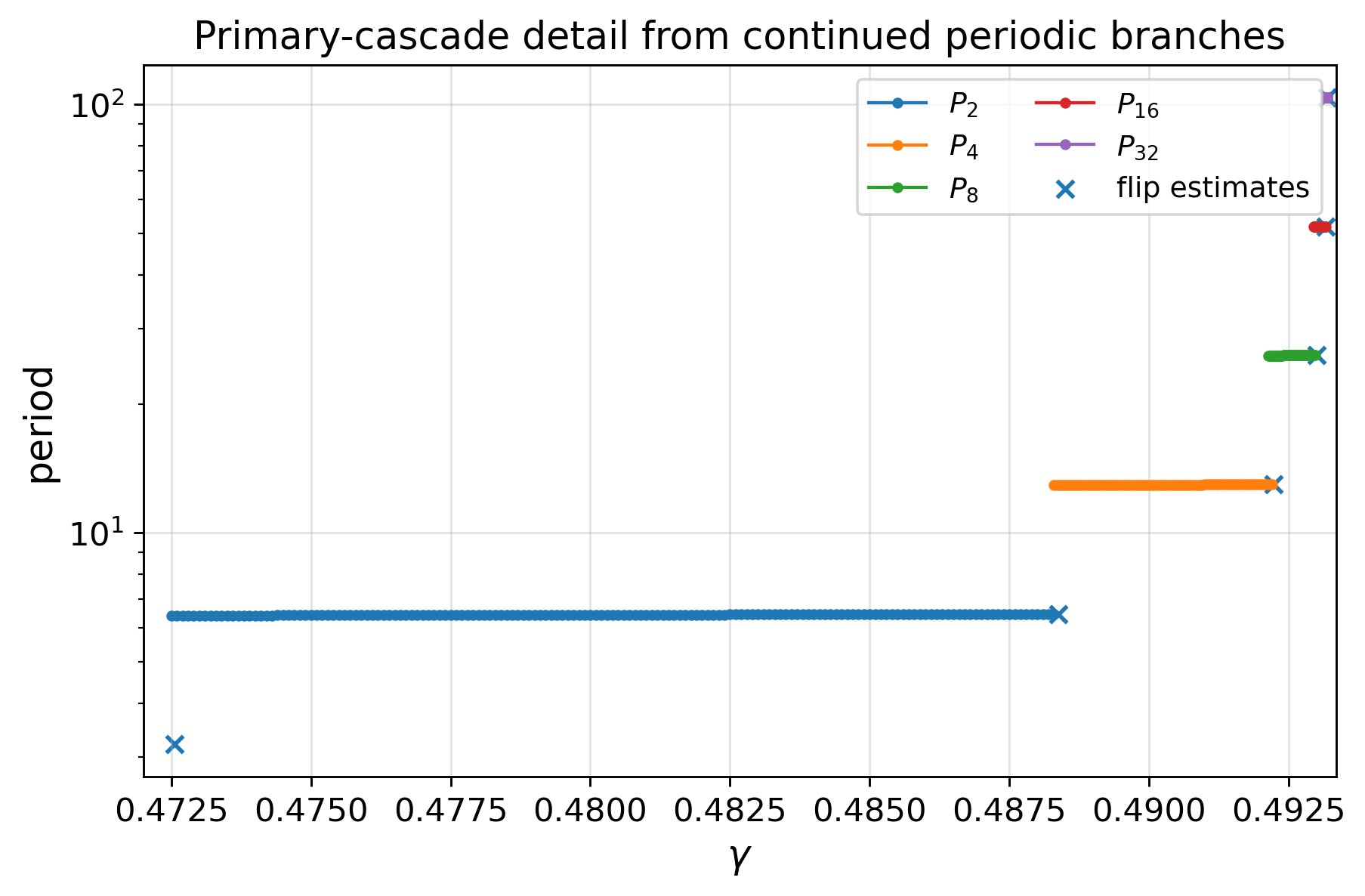}
\caption{Continuation detail on the narrow primary-cascade parameter interval.}
\label{fig:primary-cascade-zoom}
\end{subfigure}
\caption{An enlargement based on periodic-orbit continuation, rather than interpolation of the global peak diagram. The computed  branches extend through $P_{32}$.}
\end{figure}

\begin{figure}[t]
\centering
\begin{subfigure}[t]{.48\textwidth}
\centering
\includegraphics[width=\textwidth]{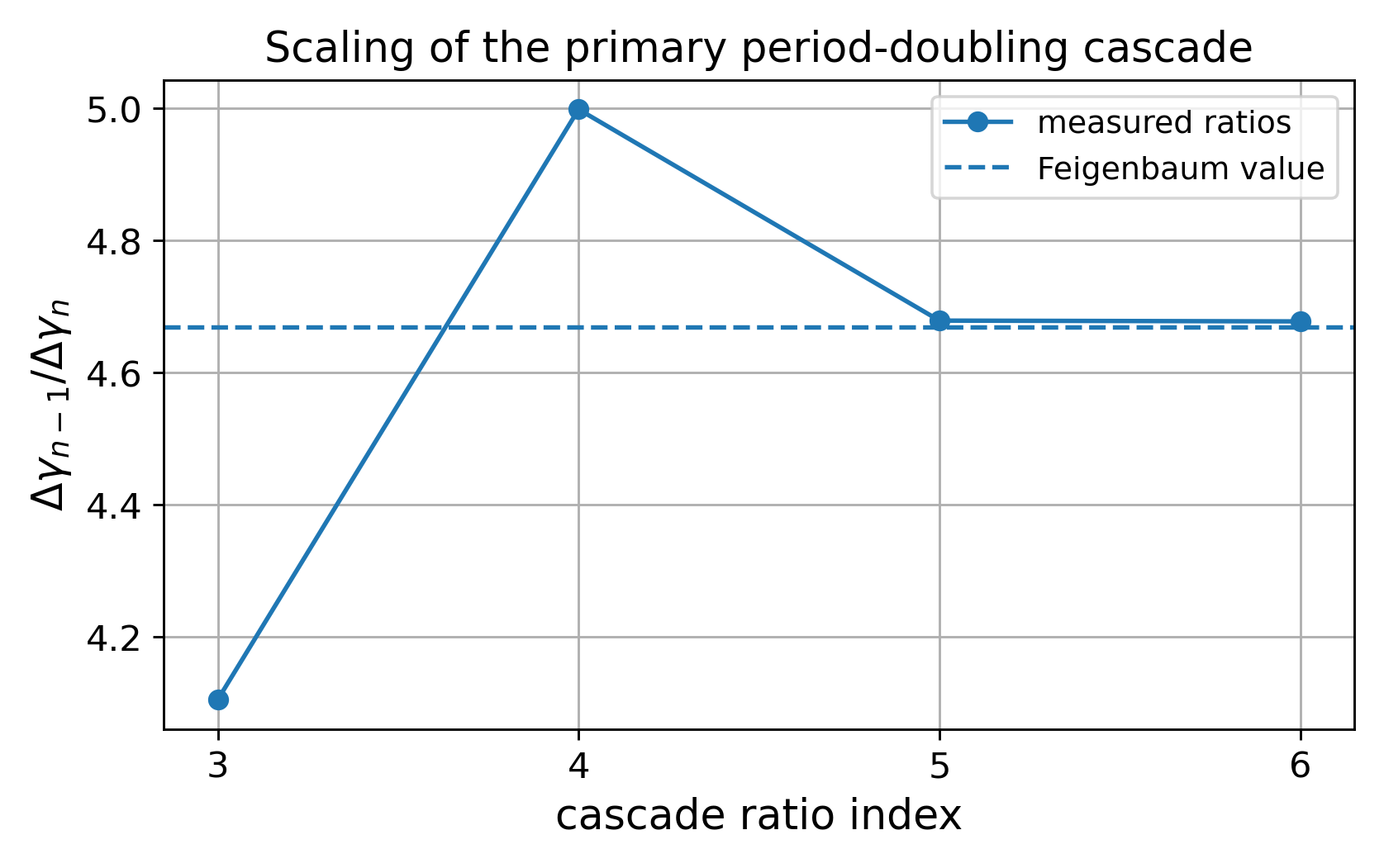}
\caption{Ratios of successive period-doubling parameter intervals.}
\label{fig:cascade-scaling}
\end{subfigure}
\hfill
\begin{subfigure}[t]{.48\textwidth}
\centering
\includegraphics[width=\textwidth]{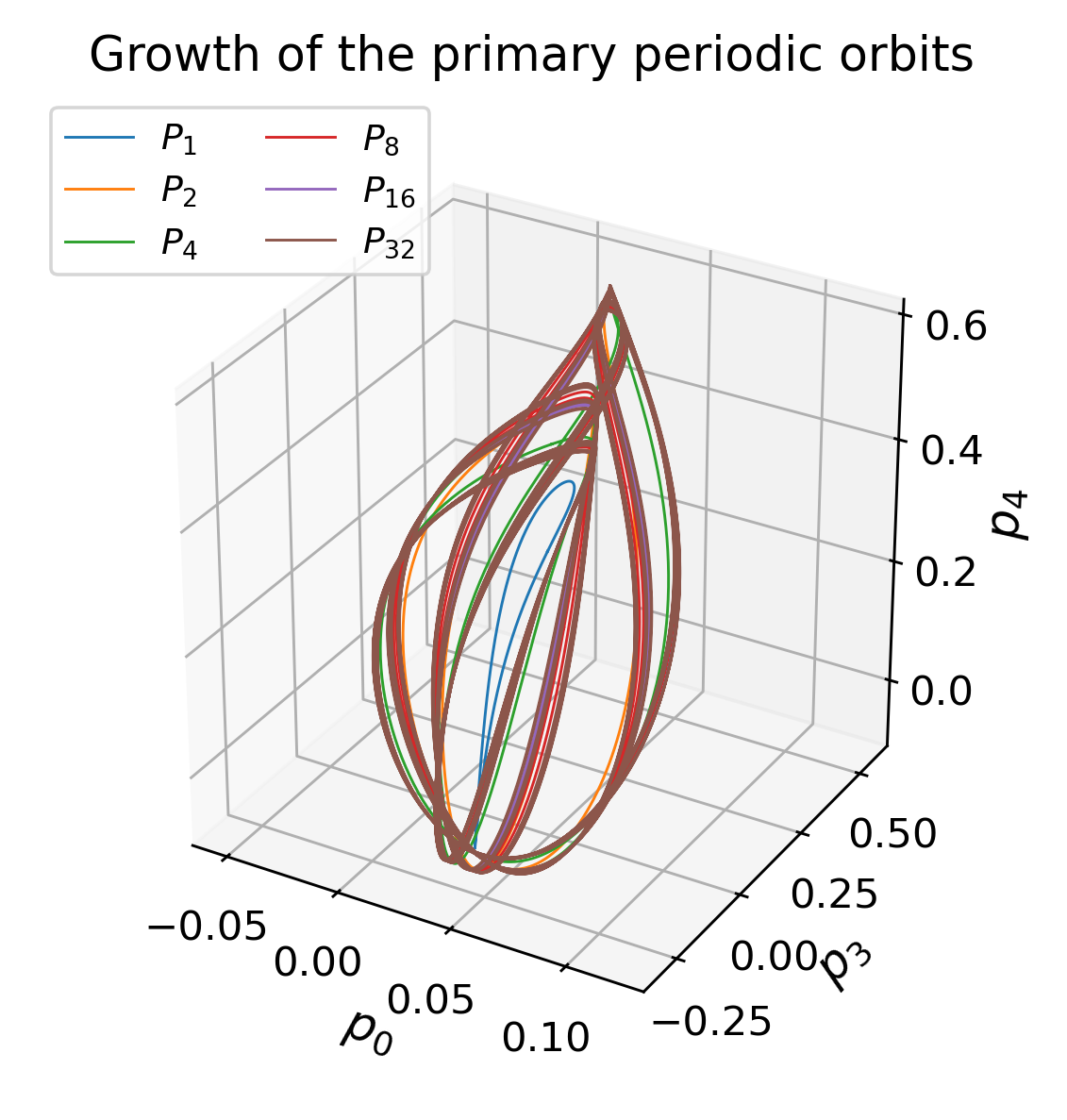}
\caption{Representative periodic orbits on the primary branches.}
\label{fig:primary-orbits}
\end{subfigure}
\caption{Numerical scaling and projected orbit geometry of the primary branches. The reference scaling value does not establish universality for this system.}
\end{figure}

\subsection{Periodic windows, coexisting cycles and representative trajectories}

A separate  branch of periodic orbits
 spans a \emph{periodic window},
i.e. an interval of $\gamma$ on which the observed dynamics is periodic, and is numerically stable on
two subintervals of $\gamma$.
We refer to it as the broad-window branch. Its endpoints are presumably fold bifurcations, inferred from continuation in $\gamma$;
continuation through these folds
has not been performed. Another branch of periodic
orbits and the branches arising from it by period doubling
form a further cascade in this window, while another narrow periodic window occurs near $\gamma=0.855$. The computed endpoints do not establish that the broad-window branch is an \emph{isola}, i.e. a closed curve of periodic
orbits.

\begin{figure}[t]
\centering
\begin{subfigure}[t]{.48\textwidth}
\centering
\includegraphics[width=\textwidth]{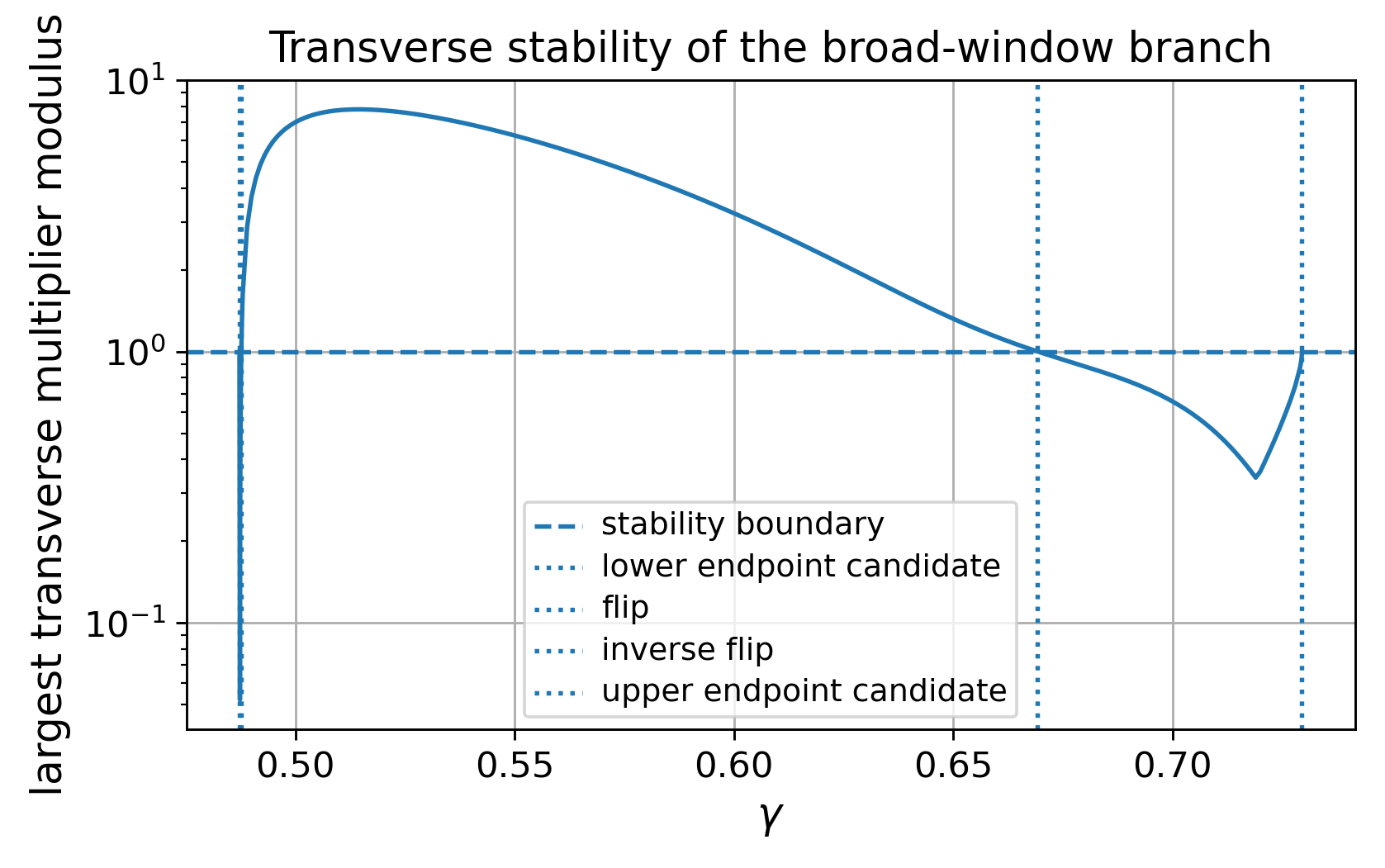}
\caption{Transverse stability of the broad-window branch.}
\label{fig:broad-p2}
\end{subfigure}
\hfill
\begin{subfigure}[t]{.48\textwidth}
\centering
\includegraphics[width=\textwidth]{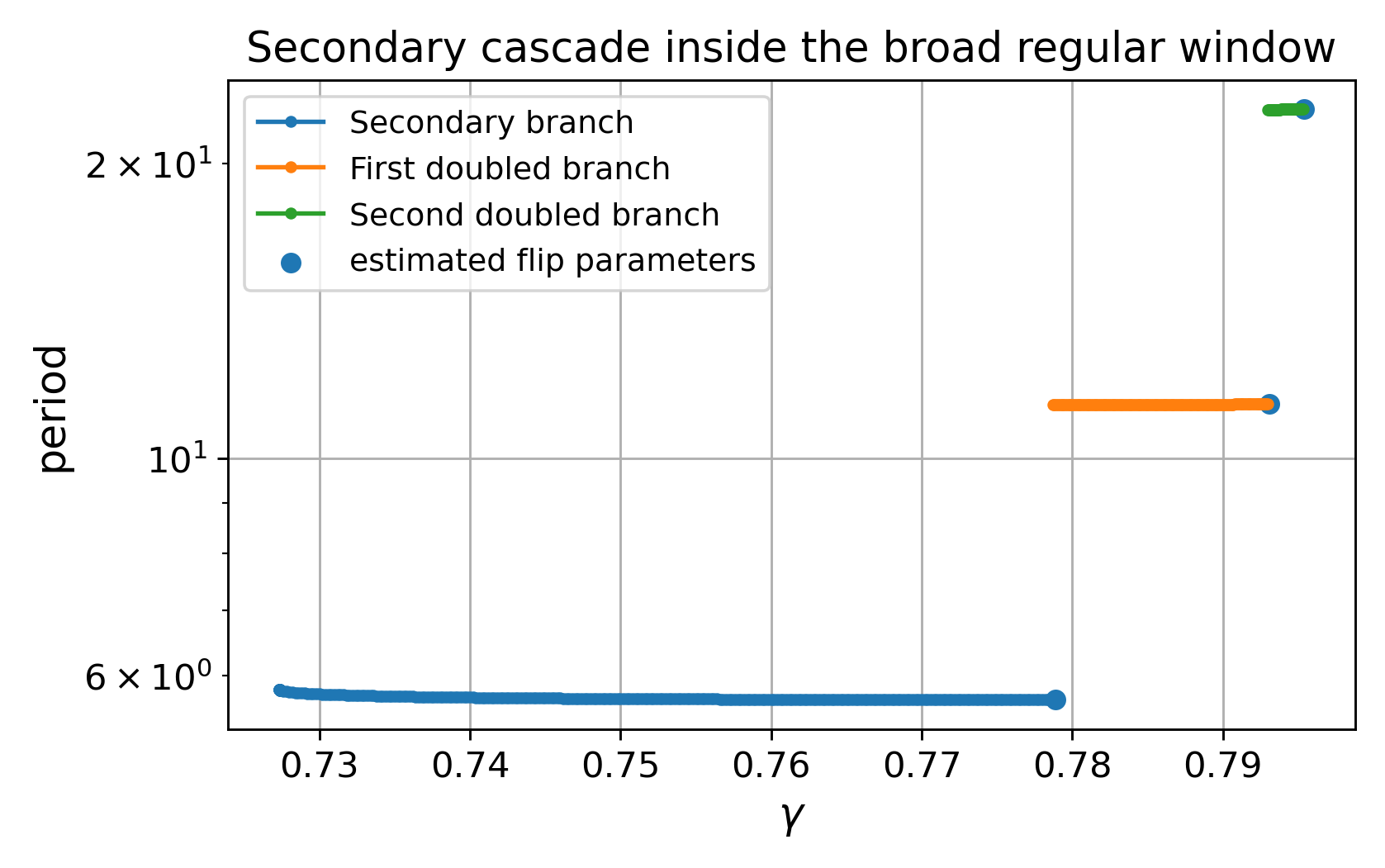}
\caption{Secondary doubling sequence in the regular window.}
\label{fig:window-cascade}
\end{subfigure}
\caption{Additional periodic families. Stability is evaluated from the computed transverse Floquet multipliers; the endpoint markers in the broad-window panel indicate presumed folds.}
\end{figure}

\begin{figure}[t]
\centering
\begin{subfigure}[t]{.48\textwidth}
\centering
\includegraphics[width=\textwidth]{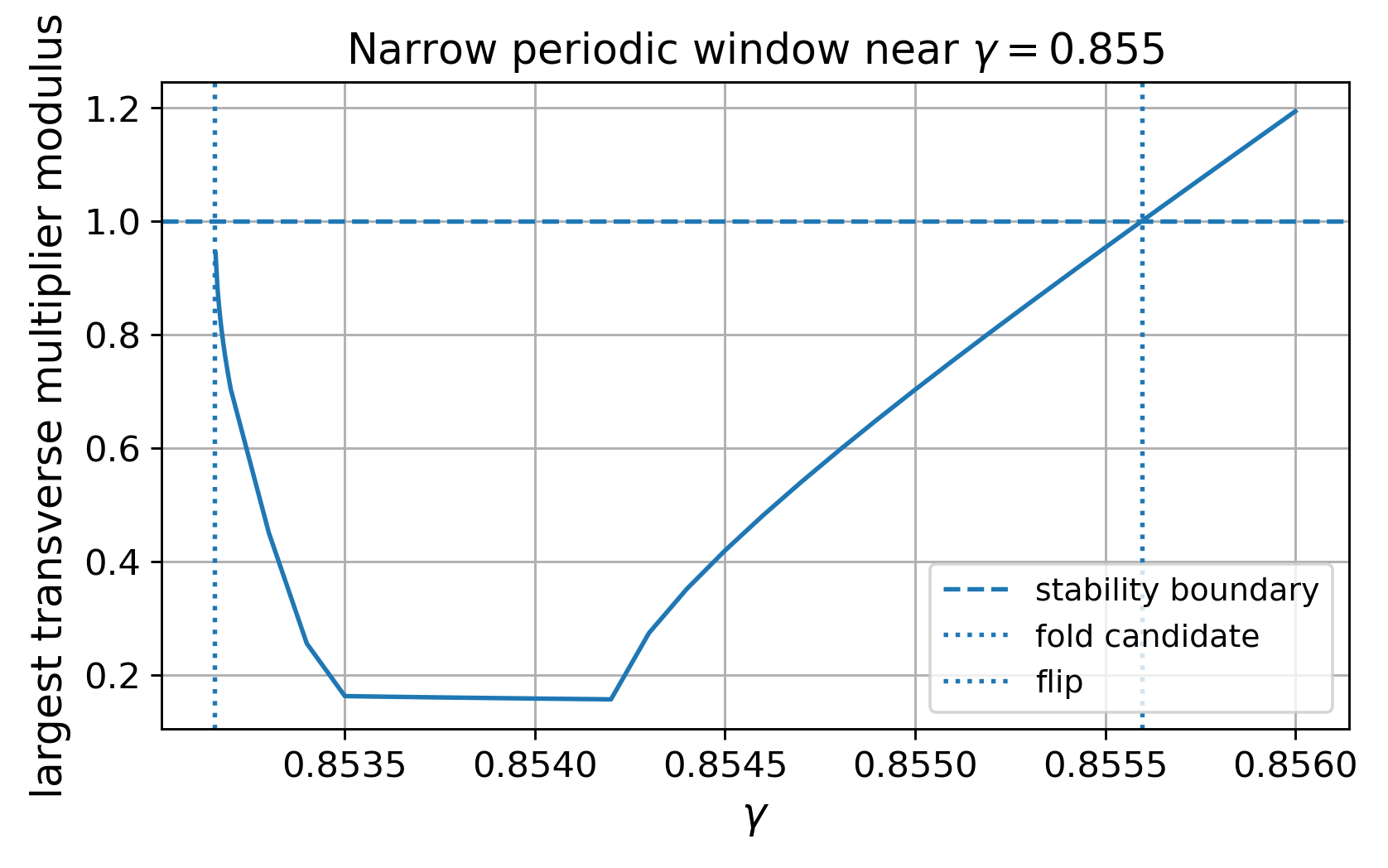}
\caption{Transverse stability near $\gamma=0.855$.}
\label{fig:narrow-window}
\end{subfigure}
\hfill
\begin{subfigure}[t]{.48\textwidth}
\centering
\includegraphics[width=\textwidth]{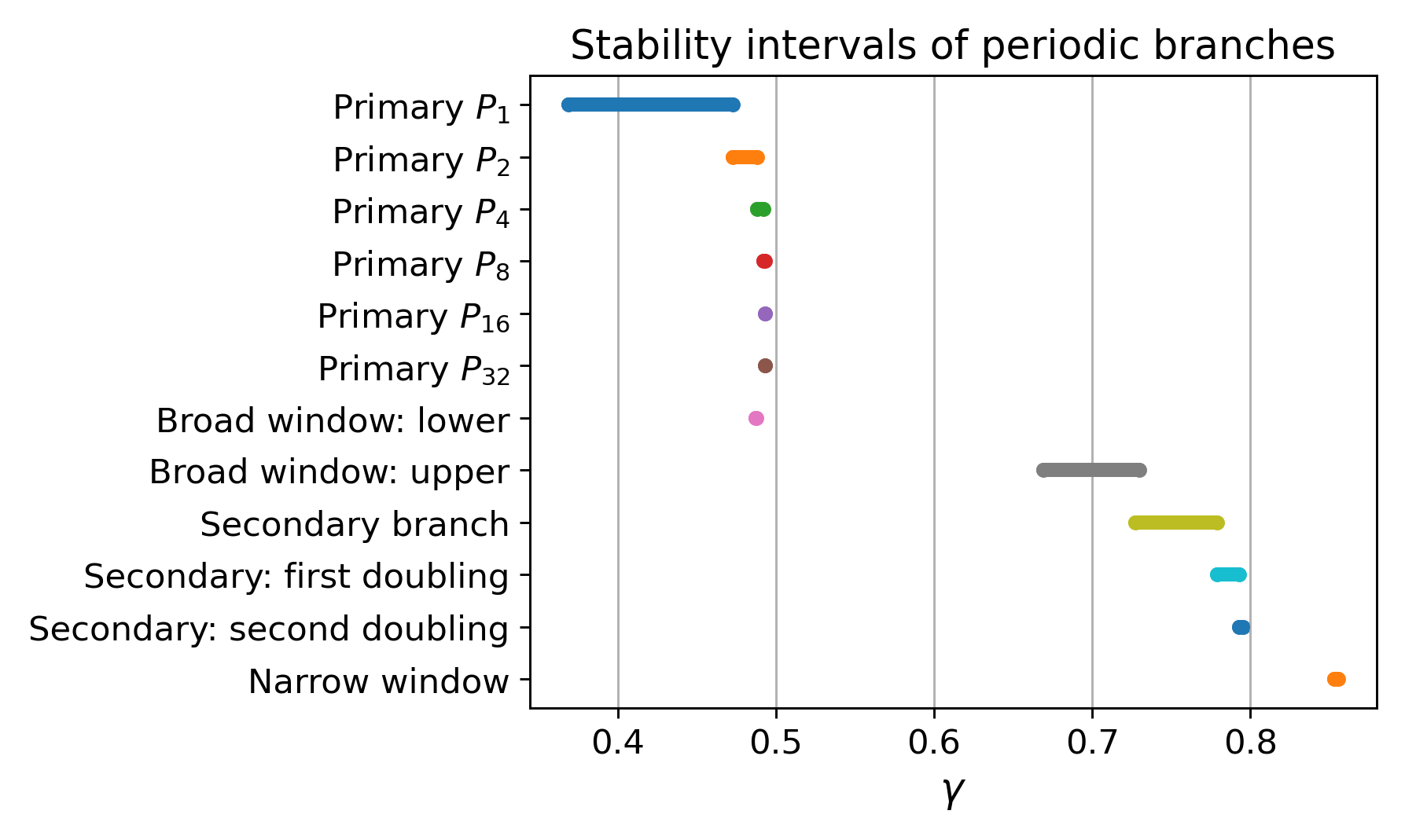}
\caption{Stability intervals of the periodic branches and their overlap.}
\label{fig:branch-atlas}
\end{subfigure}
\caption{Numerically computed periodic windows. Where the stability intervals in (B) overlap, two stable
periodic orbits at the same value of $\gamma$ are compared in \cref{tab:riccati-bistability}; the overlap alone does not
determine the basins of attraction.
}
\end{figure}

Table~\ref{tab:riccati-bistability} gives numerical evidence of bistability at two fixed  values of $\gamma$.
At each value, two distinct periodic orbits are obtained. Independent adaptive integration from their computed initial points   $p(0)$ gives $\|p(T)-p(0)\|$, $T$ the period,
between $2.2\times10^{-12}$ and $3.1\times10^{-10}$,
 and all transverse multipliers have
modulus less than $1$.
The different periods and trajectories support coexistence of distinct stable cycles. These computations provide numerical, rather than rigorously
verified,
evidence for their existence and stability; the basins of attraction are not determined.

\begin{table}[t]
\centering
\caption{Coexisting periodic orbits at fixed $\gamma$. The quantity $\rho_\perp$ is the largest modulus of the four transverse Floquet multipliers; the neutral flow multiplier is excluded.}
\label{tab:riccati-bistability}
{\small
\begin{tabular}{llrr}
\toprule
$\gamma$ & branch & period $T$ & $\rho_\perp$\\
\midrule
0.4875 & Primary $P_2$ & 6.46449988 & 0.89297049\\
0.4875 & Broad-window branch & 7.04181447 & 0.52867506\\
0.7280 & Broad-window branch & 6.04808795 & 0.74638201\\
0.7280 & Secondary branch & 5.77391290 & 0.79556517\\
\bottomrule
\end{tabular}
}
\end{table}

\subsection{Two examples: $\gamma=0.6$ and  $\gamma=0.7$} Let first $\gamma=0.6$.
After a transient has been discarded (Appendix~\ref{app:numerics}), the
solution is sampled at $200{,}001$ equally spaced times over $2{,}000$ time units and has $803$ times $t$ with $p_1(t)=0$ and $\dot p_1(t)>0$.
Independent integration of the variational equation gives values of the largest Lyapunov exponent, computed over
finite time intervals,
between $0.1766$ and $0.1864$ when the integration step and the length of the time interval
are varied as specified in Appendix~\ref{app:riccati-validation}. The exponent remains positive in all these tests, although its magnitude is not established to high precision. The trajectory, the Poincar\'e section  $\{p_1=0,\ \dot p_1>0\}$  and  the time series of the
coordinates $p_0,\dots,p_4$
provide complementary evidence of irregular dynamics.

\begin{figure}[t]
\centering
\begin{subfigure}[t]{.48\textwidth}
\centering
\includegraphics[width=\textwidth]{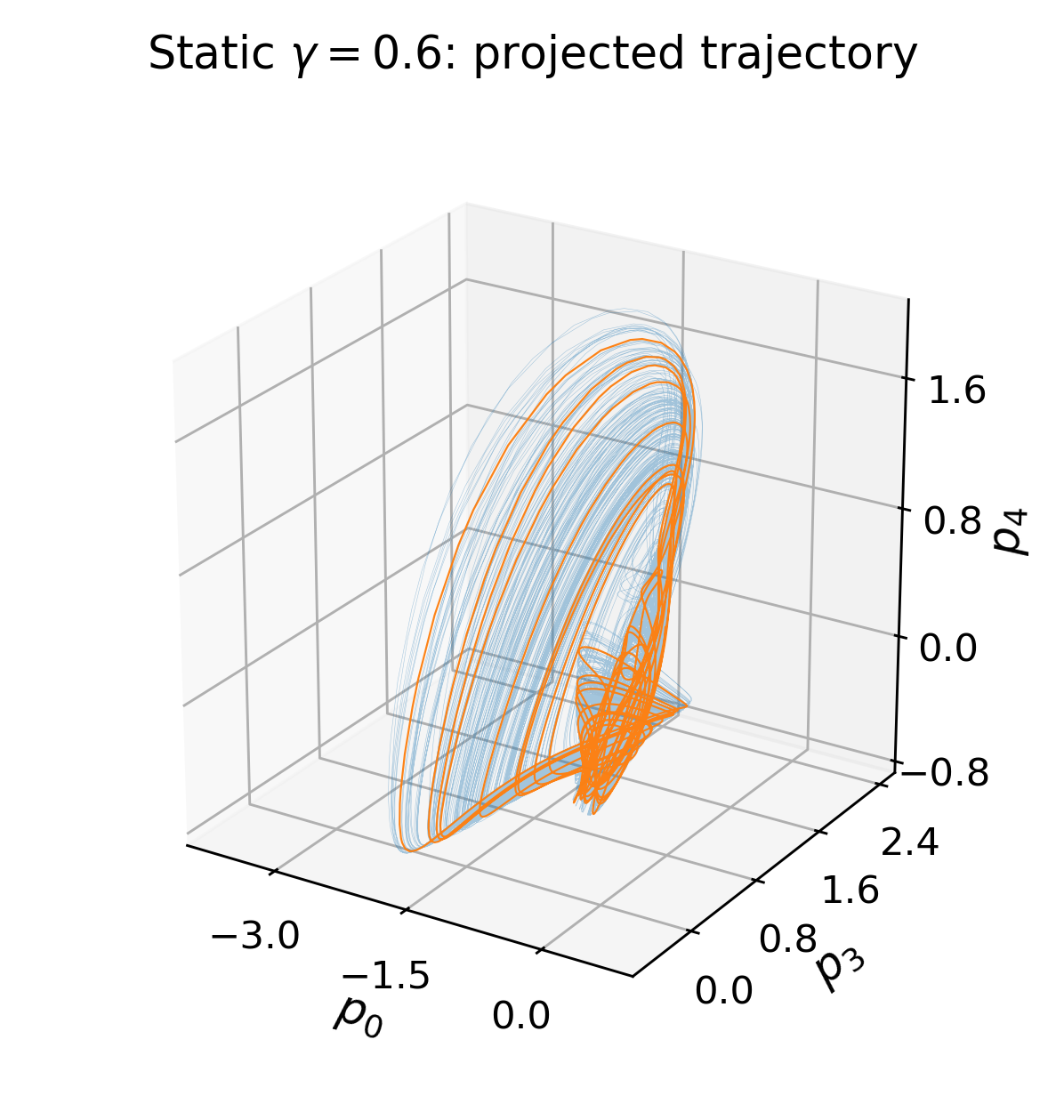}
\caption{Projection of the post-transient trajectory.}
\label{fig:riccati-chaotic-trajectory}
\end{subfigure}
\hfill
\begin{subfigure}[t]{.48\textwidth}
\centering
\includegraphics[width=\textwidth]{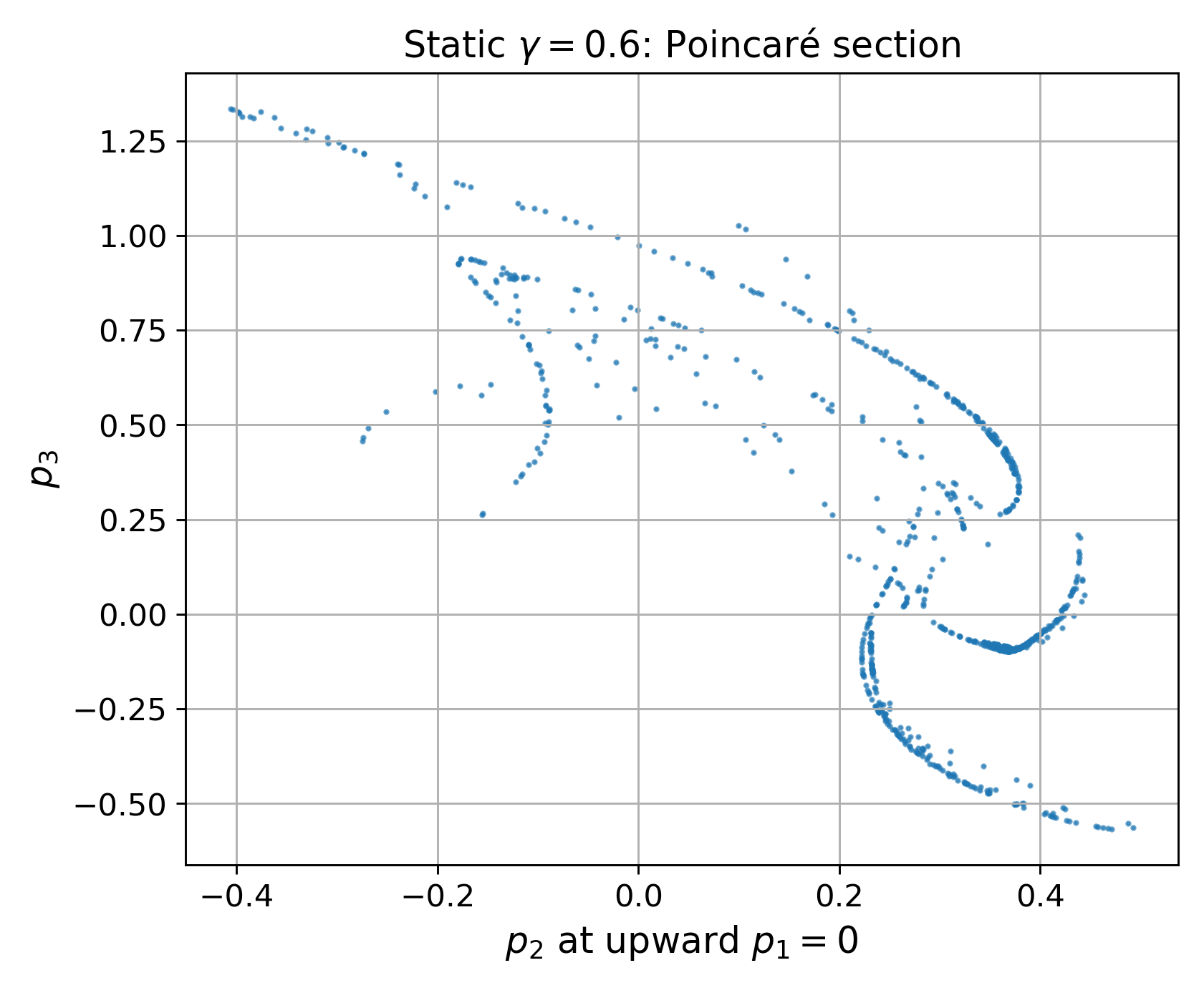}
\caption{The $803$ upward crossings of $p_1=0$.}
\label{fig:riccati-chaotic-poincare}
\end{subfigure}

\medskip
\begin{subfigure}[t]{.94\textwidth}
\centering
\includegraphics[width=\textwidth]{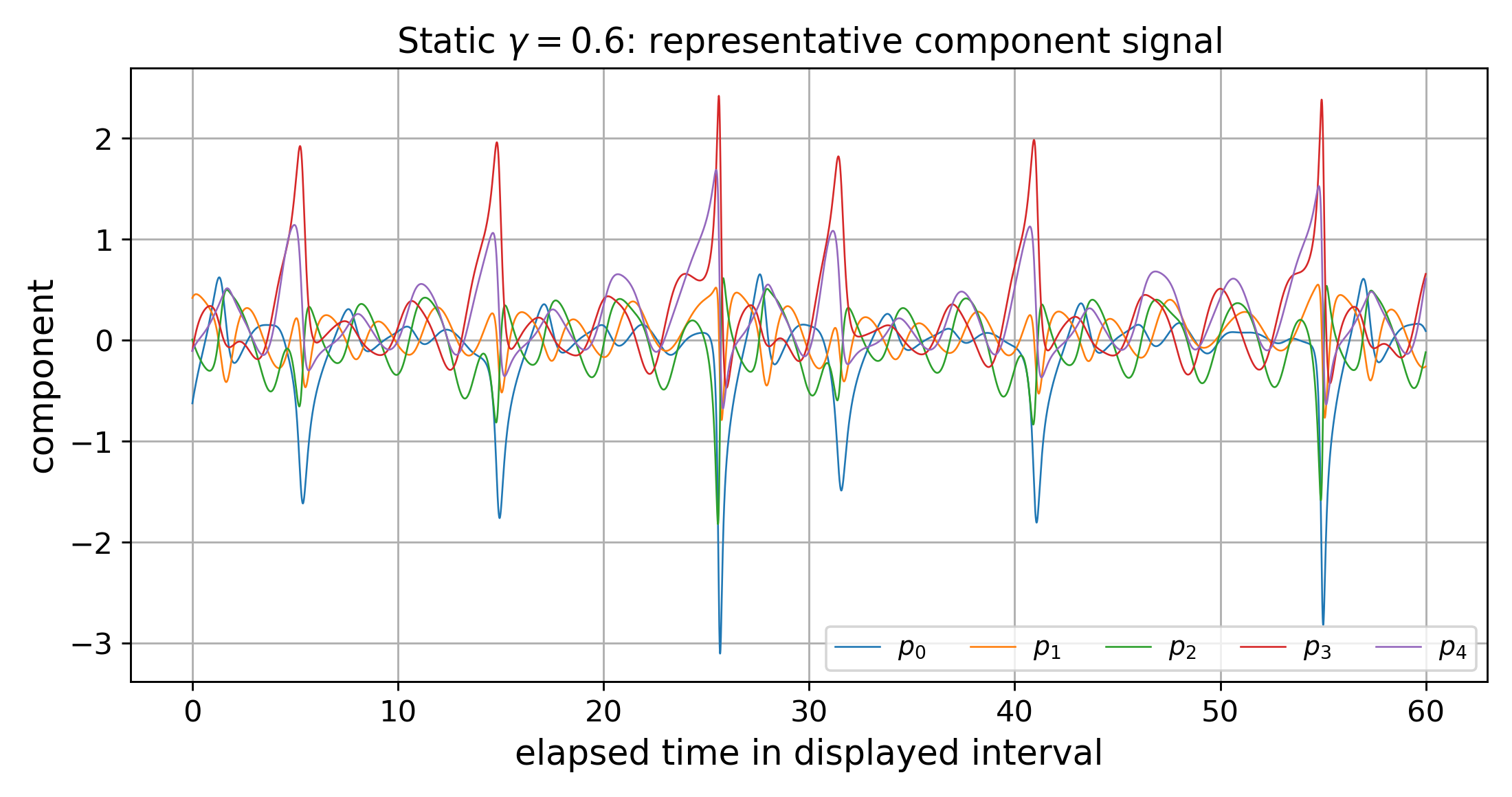}
\caption{A representative late-time component interval.}
\label{fig:riccati-chaotic-timeseries}
\end{subfigure}
\caption{Fixed $\gamma=0.6$: numerical evidence for irregular dynamics with a positive finite-time exponent. The integration intervals for the plotted solutions and the independent spectrum calculations are specified separately in Appendix~\ref{app:riccati-validation}.}
\end{figure}

For a periodic example at $\gamma=0.7$,
the
solution is sampled after a transient at $60{,}001$ times over $600$ time units and has $287$ times $t$ with $p_1(t)=0$ and $\dot p_1(t)>0$.
Independent reintegration of the periodic orbit on the broad-window branch gives period $T\approx6.260607$. One orbit period contains two local maxima of $p_3$  but three times with $p_1=0$ and $\dot p_1>0$,
explaining the three clusters in the Poincar\'e section.  The number of local maxima of $p_3$ and the
number of crossings of the Poincaré section
therefore describe different properties of the same orbit. This branch is distinct from the branch $P_2$.

\begin{figure}[t]
\centering
\begin{subfigure}[t]{.48\textwidth}
\centering
\includegraphics[width=\textwidth]{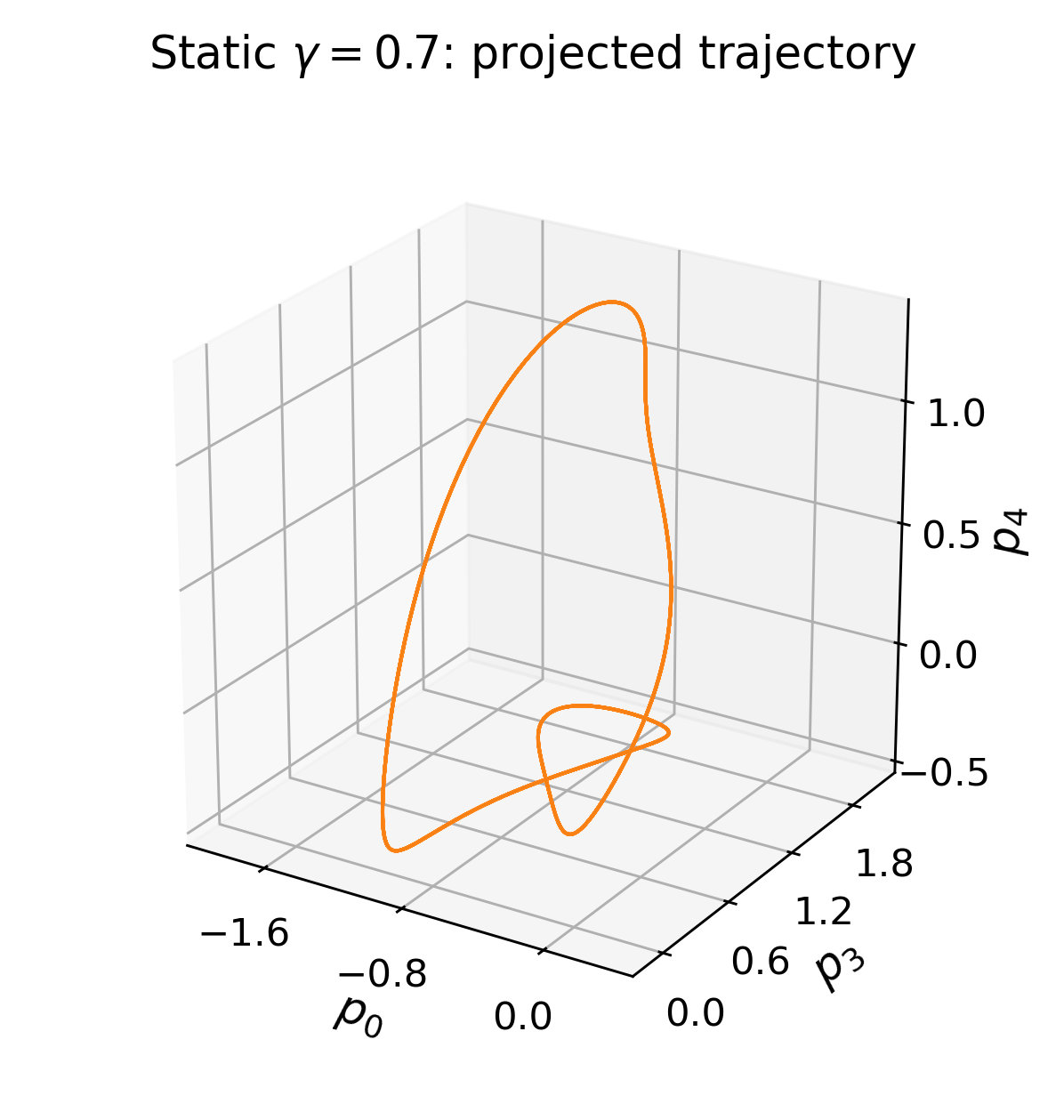}
\caption{Projected broad-window periodic orbit.}
\label{fig:riccati-regular-trajectory}
\end{subfigure}
\hfill
\begin{subfigure}[t]{.48\textwidth}
\centering
\includegraphics[width=\textwidth]{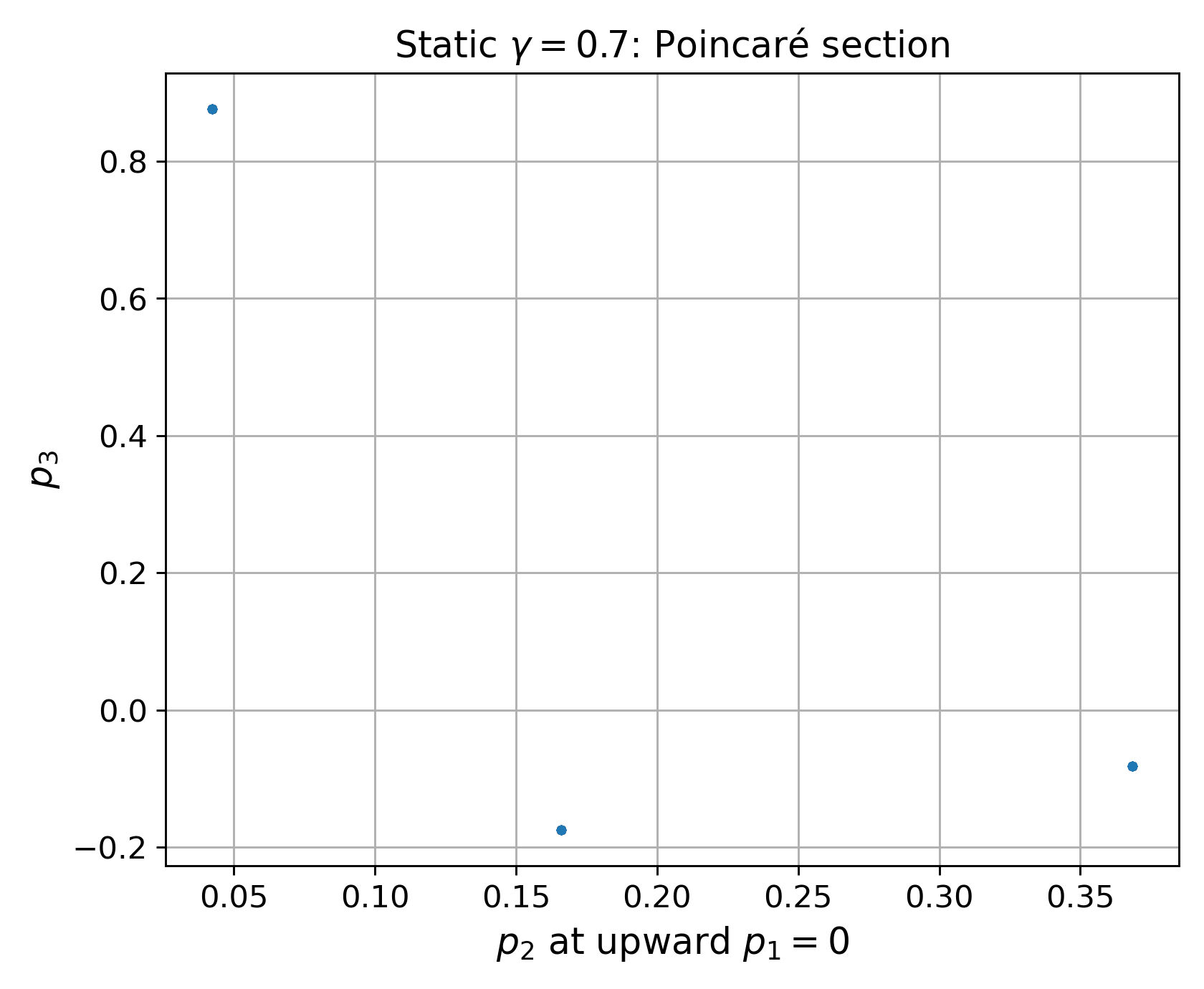}
\caption{Three upward-crossing clusters per period.}
\label{fig:riccati-regular-poincare}
\end{subfigure}

\medskip
\begin{subfigure}[t]{.94\textwidth}
\centering
\includegraphics[width=\textwidth]{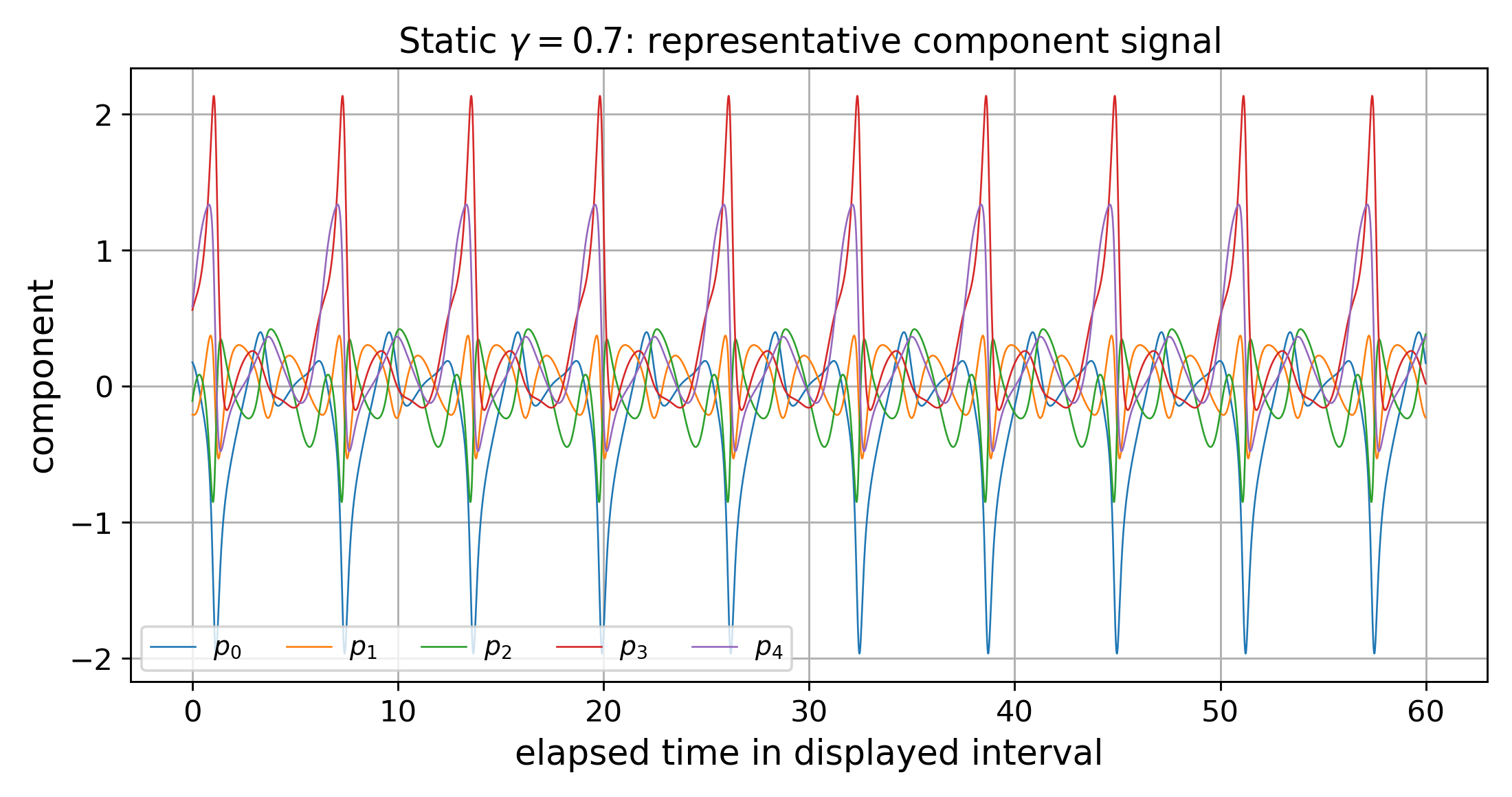}
\caption{A representative late-time component interval.}
\label{fig:riccati-regular-timeseries}
\end{subfigure}
\caption{Fixed $\gamma=0.7$: the three-point crossing pattern is compatible with two local maxima of $p_3$ per orbit period. Counts depend on the observable and section used.}
\end{figure}

\subsection{Periodically varying $\gamma$
}

For prescribed $\gamma(t)$ we consider $\dot p=F_{\gamma(t)}(p)$. Computations for
several frequencies $\omega$
use the time-dependent reference coordinates $p=S_{\gamma(t)}q$, with $S_\gamma=\operatorname{diag}(1,1,1,1,\gamma)$ from \cref{eq:weighted-state-main}. Since both protocols below remain strictly positive, this is an invertible coordinate change, not a different model. Differentiation gives

\begin{equation}
 \dot q=S_\gamma^{-1}F_\gamma(S_\gamma q)
 -S_\gamma^{-1}\dot S_\gamma q,
 \label{eq:hidden-coordinate}
\end{equation}

The fifth reference coordinate therefore contains $-(\dot\gamma/\gamma)q_4$. In this subsection, plotted coordinates are those of $p$,
in particular $p_4=\gamma q_4$. The  initial  value
is $q(0)=\tfrac12(1,-1,-1,1,-1)$; the actual initial state is $S_{\gamma(0)}q(0)$. Since $S_{\gamma(t)}$ and $S_{\gamma(t)}^{-1}$ are bounded, this coordinate change does not
change Lyapunov exponents
but finite-time estimates still depend on the coordinates and on
the length of the time interval.

For $\gamma(t)$ periodic with period $T_f=2\pi/\omega$, we consider the stroboscopic map
$p(t_0)\mapsto p(t_0+T_f)$. Unlike the monodromy matrix of a periodic orbit of an autonomous system,
its derivative has no eigenvalue 1 forced by the direction of the flow.
Consequently, an attracting $T_f$-periodic solution can have a negative largest
Lyapunov exponent.

{\bf Protocol I:  $0.60\le\gamma(t)\le0.76$.}
The first protocol is
\begin{equation}
 \gamma_{\mathrm I}(t)=0.68+0.08\sin(\omega t),
 \qquad 0.60\leq\gamma_{\mathrm I}(t)\leq0.76.
 \label{eq:protocol-I}
\end{equation}

It repeatedly passes through values of $\gamma$ at which the dynamics for fixed $\gamma$ is irregular, such as $\gamma=0.6$, and periodic, such as $\gamma=0.7$.
The estimated largest Lyapunov exponent is positive at the sampled frequencies up to $\omega=0.8$,
attaining approximately $0.1521$ there, and negative at $\omega=1$ and $1.5$. The value at $\omega=2$ is
small and negative. Thus the sampled estimates first increase and then decrease with frequency.
Negative estimates suggest contraction over the time interval used; they do not alone
prove that the solution becomes $T_f$-periodic.

{\bf Protocol II:  $0.05\le\gamma(t)\le1$.}
The second protocol is
\begin{equation}
 \gamma_{\mathrm{II}}(t)=0.525+0.475\sin(\omega t),
 \qquad 0.05\leq\gamma_{\mathrm{II}}(t)\leq1.
 \label{eq:protocol-II}
\end{equation}
It passes through nearly all of $[0,1]$ while keeping $\gamma(t)>0$, so that $S_{\gamma(t)}$
stays invertible.
The computed time series show  time intervals of small amplitude,
interrupted by short bursts of large amplitude, and more regular behaviour
at selected frequencies. A nearly zero finite-time exponent, such as that obtained at $\omega=1.5$, does not by itself determine the type of   attractor.  The fixed-step integration
failed in the fixed-step scan at $\omega=2$; this case is omitted from the exponent plot and examined separately using adaptive integration below.

\begin{table}[t]
\centering
\caption{Largest finite-time Lyapunov exponent estimates for the two protocols, obtained by rescaling nearby trajectories in the reference coordinates. The accumulation interval is $800$ time units after a transient of $400$ time units. No exponent is reported for Protocol II at $\omega=2$ (---), where the fixed-step integration failed.}
\label{tab:protocol-lle}
\small
\begin{tabular}{rcc}
\toprule
$\omega$ & Protocol I & Protocol II\\
\midrule
$0.01$ & $0.05557$ & $-0.06437$\\
$0.10$ & $0.06833$ & $0.02262$\\
$0.20$ & $0.07486$ & $0.01802$\\
$0.40$ & $0.11225$ & $0.05239$\\
$0.60$ & $0.13385$ & $0.07206$\\
$0.80$ & $0.15211$ & $0.00574$\\
$1.00$ & $-0.01554$ & $0.06493$\\
$1.50$ & $-0.01106$ & $-2.3\times10^{-6}$\\
$2.00$ & $-0.00195$ & ---\\
\bottomrule
\end{tabular}
\end{table}

At $\omega=0.01$, the forcing period
is approximately $628.32$, so the   averaging time
used in \cref{tab:protocol-lle} spans only $1.27$ forcing periods and the discarded transient only $0.64$ periods. These lowest-frequency estimates do not establish converged long-time behaviour: longer
averaging times that are multiples of the forcing period
are required. All entries in the table are computed by the same method (two nearby trajectories,
Appendix~\ref{app:numerics}) over the same time interval.

\begin{figure}[t]
\centering
\begin{subfigure}[t]{.94\textwidth}
\centering
\includegraphics[width=\textwidth]{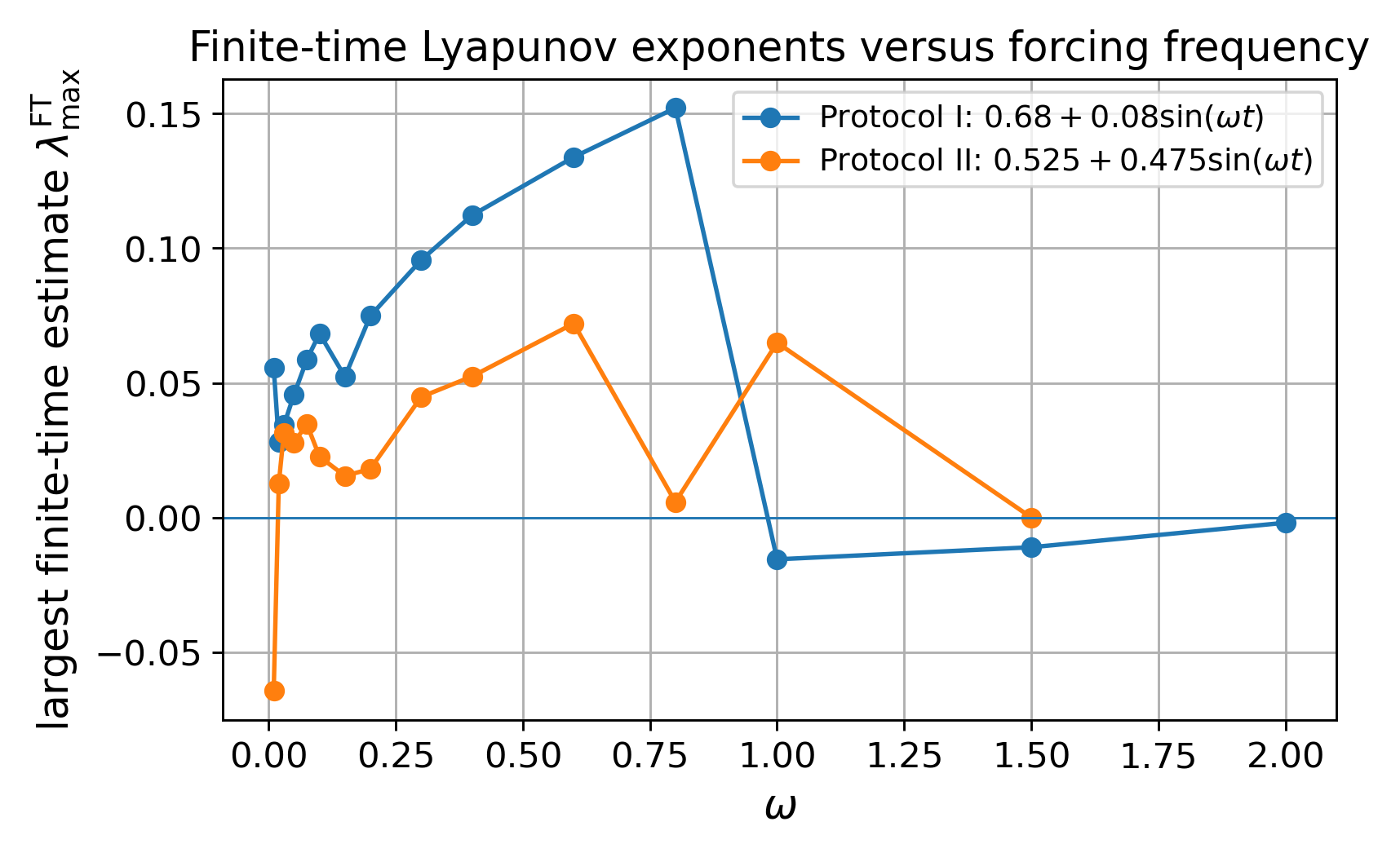}
\caption{Largest finite-time Lyapunov exponent estimates for both protocols.}
\label{fig:both-protocol-scans}
\end{subfigure}
\caption{Frequency dependence of the finite-time Lyapunov exponent estimates over a fixed time interval. Protocol II at $\omega=2$ is omitted because its fixed-step integration failed; the adaptive calculation for this case is presented separately.}
\end{figure}

For Protocol I, the time series at $\omega=0.2$ and $0.8$ illustrate the different
behaviour at these two frequencies.
A longer simulation at $\omega=0.8$ gives $230{,}001$ sampled states over $1{,}150$ time units and $147$ stroboscopic points at $\omega t=0\pmod{2\pi}$. Each stroboscopic point is computed by a short integration from the preceding sampled state, as described in Appendix~\ref{app:riccati-validation}. These points describe the same solution rather than an independent long-time convergence test.

\begin{figure}[t]
\centering
\begin{subfigure}[t]{.94\textwidth}
\centering
\includegraphics[width=\textwidth]{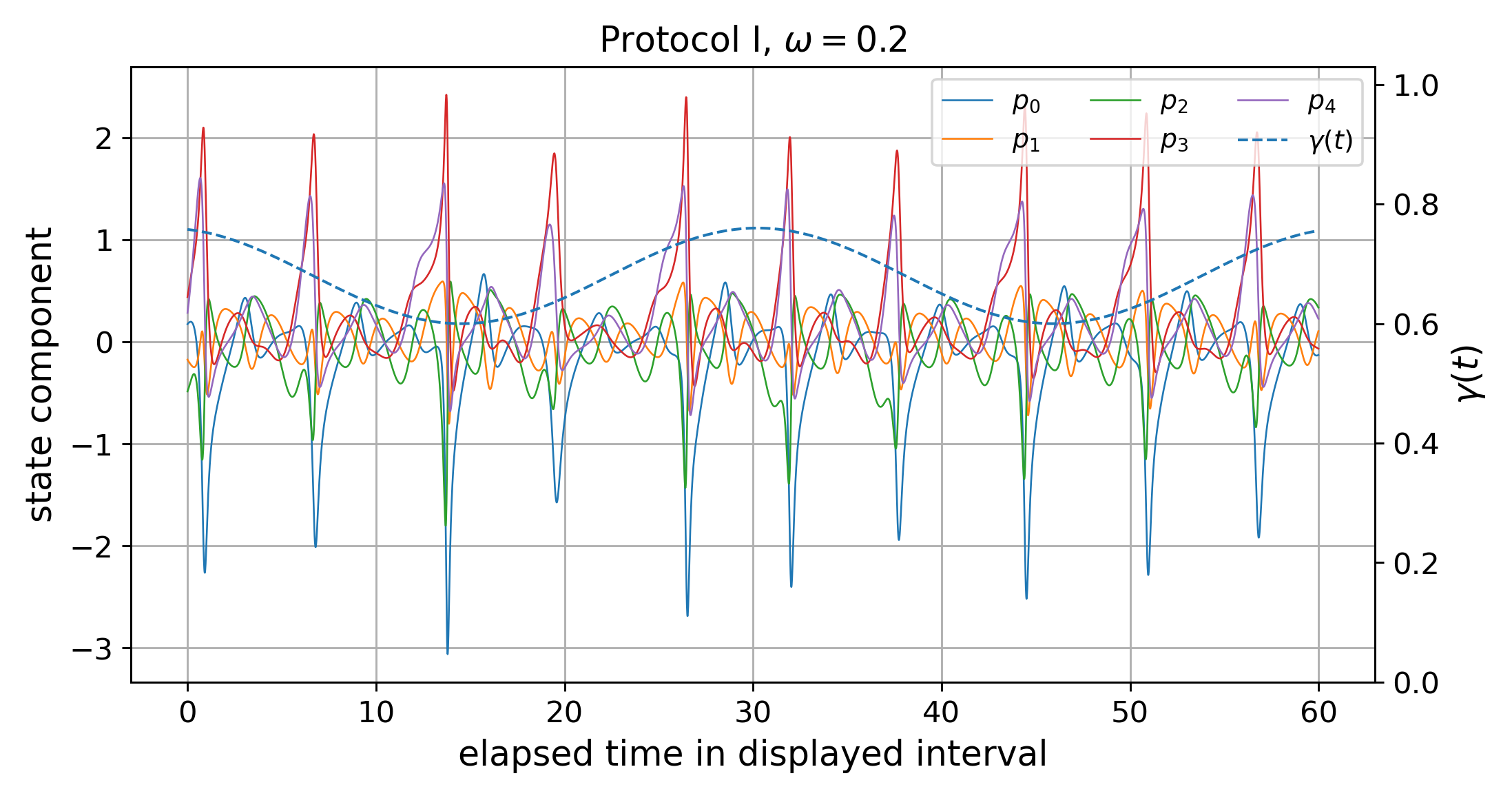}
\caption{Protocol I at $\omega=0.2$.}
\label{fig:protocol-I-02}
\end{subfigure}

\medskip
\begin{subfigure}[t]{.94\textwidth}
\centering
\includegraphics[width=\textwidth]{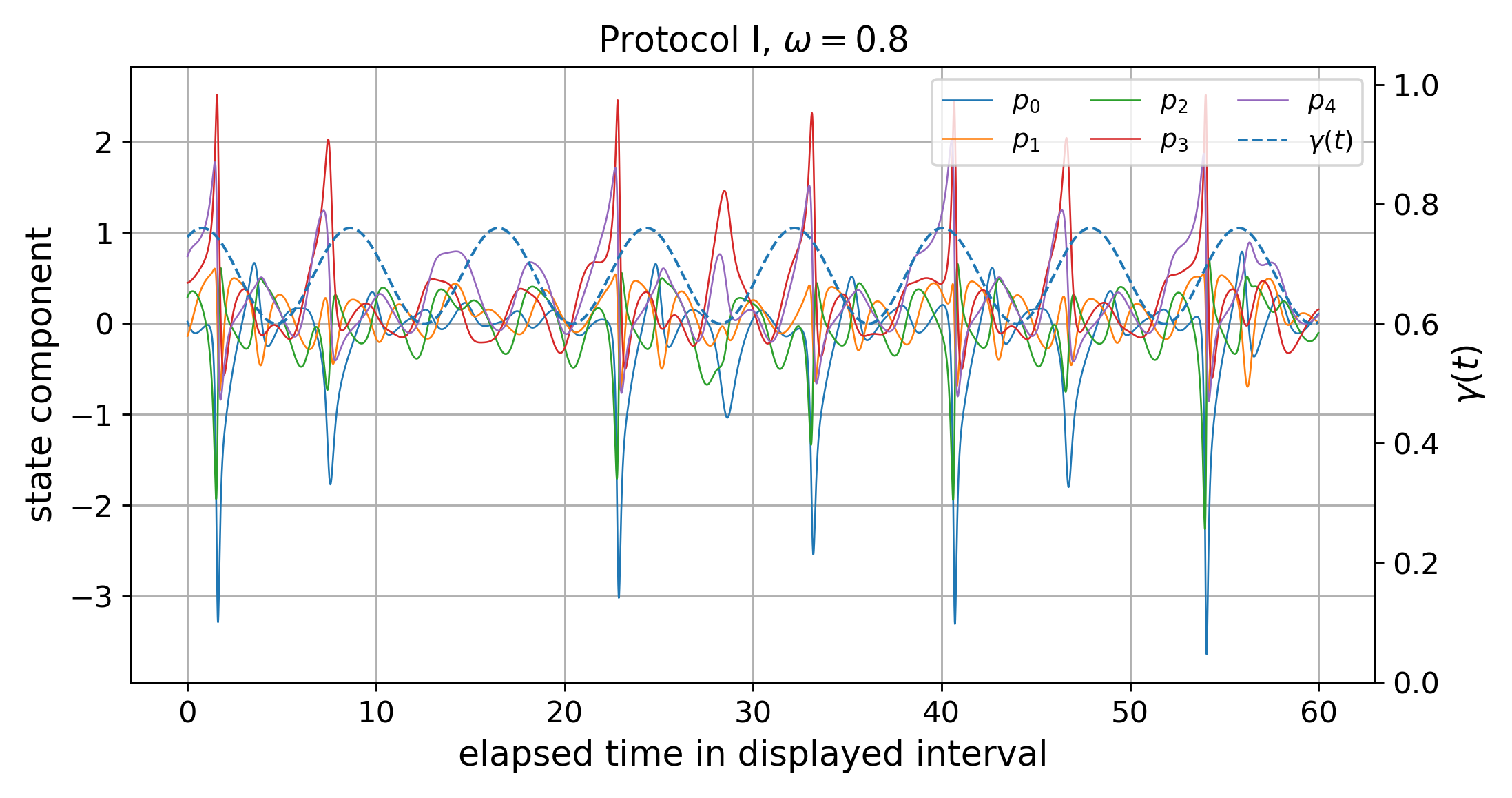}
\caption{Protocol I at $\omega=0.8$.}
\label{fig:protocol-I-08}
\end{subfigure}
\caption{Protocol I time series. The dashed curve and right-hand axis show $\gamma(t)$, allowing direct comparison of the state response with the forcing phase.}
\end{figure}

\begin{figure}[t]
\centering
\begin{subfigure}[t]{.48\textwidth}
\centering
\includegraphics[width=\textwidth]{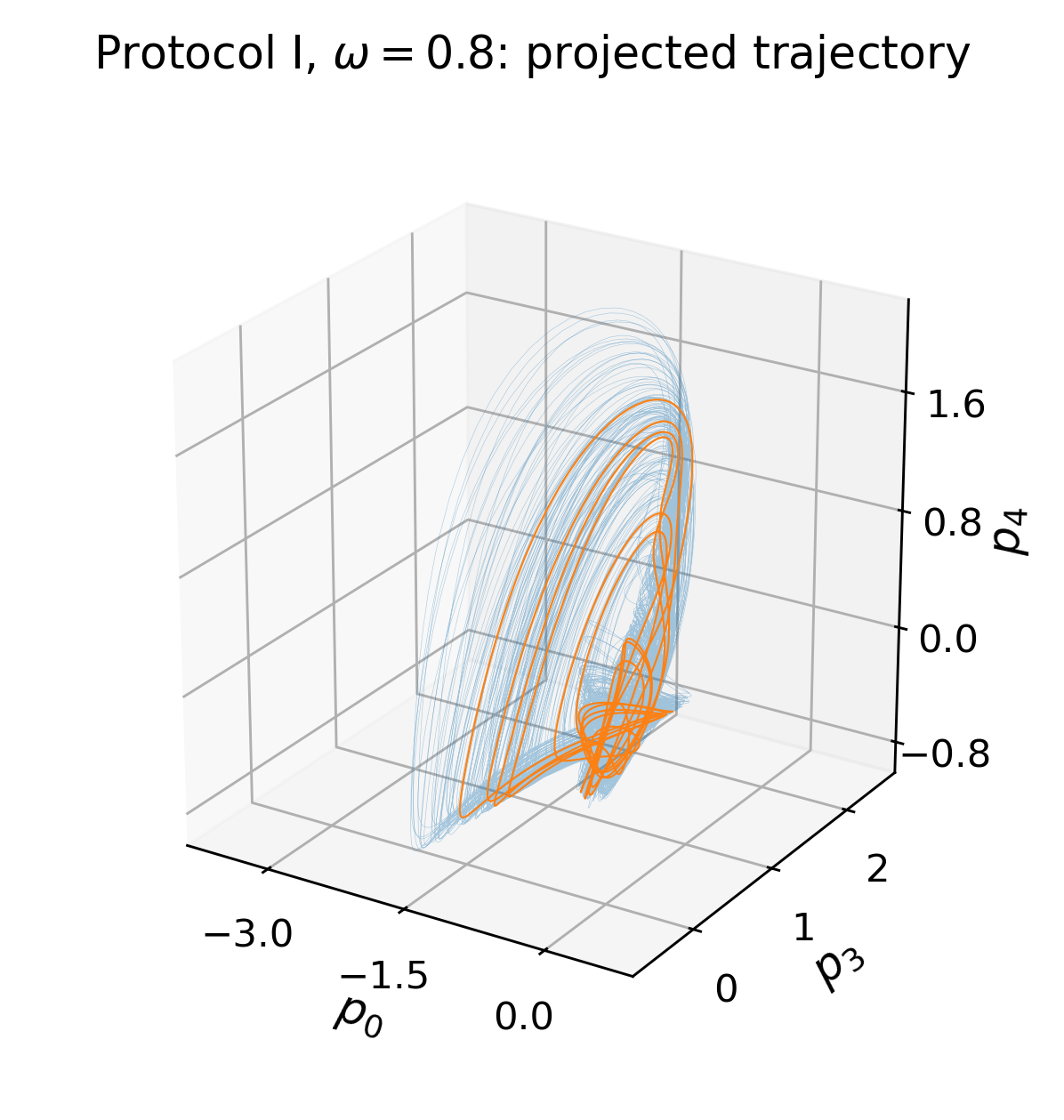}
\caption{Projection of the post-transient trajectory.}
\label{fig:protocol-I-dense-trajectory}
\end{subfigure}
\hfill
\begin{subfigure}[t]{.48\textwidth}
\centering
\includegraphics[width=\textwidth]{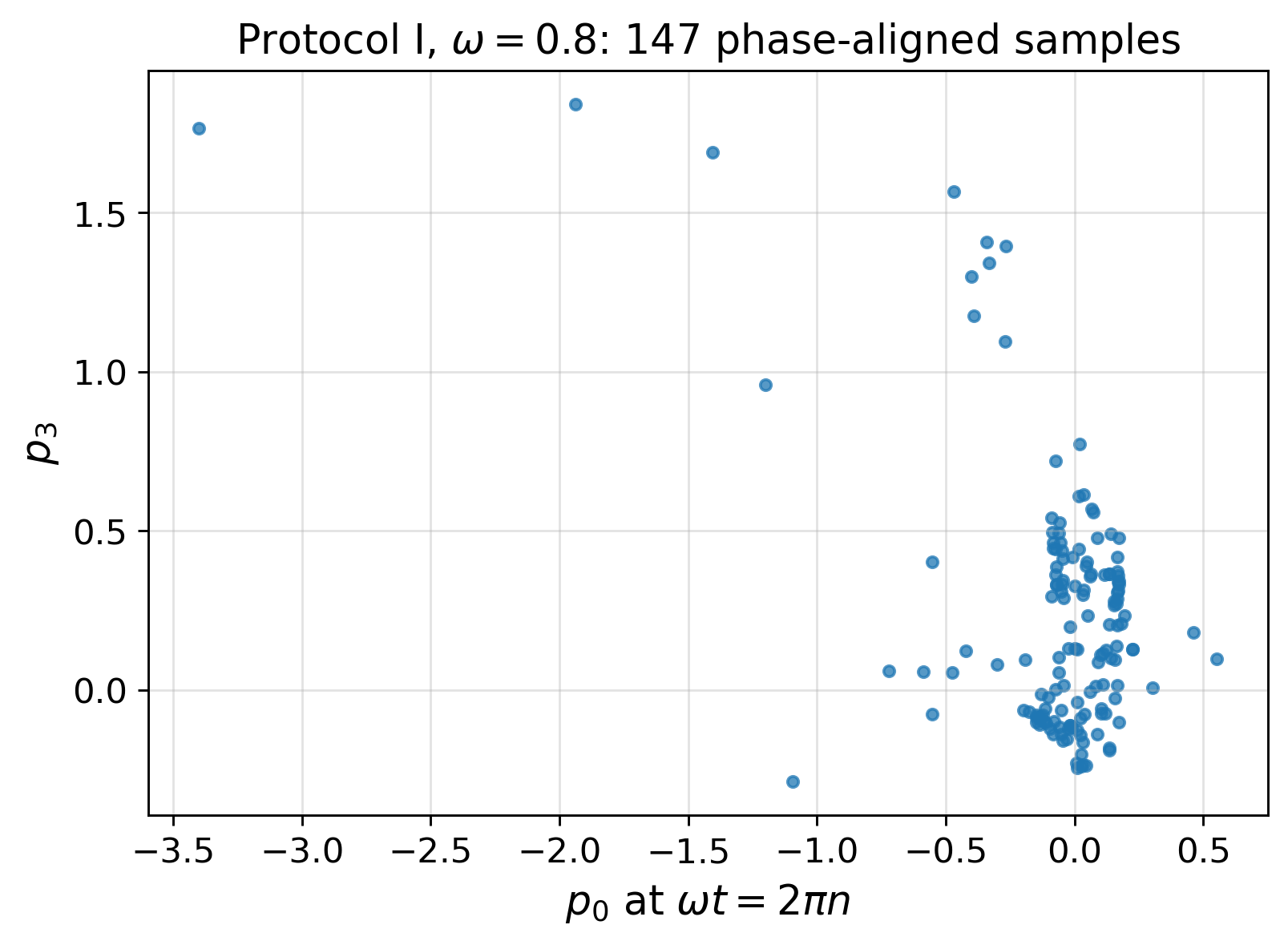}
\caption{Stroboscopic section with 147 points.}
\label{fig:protocol-I-stroboscopic}
\end{subfigure}
\caption{Protocol I at $\omega=0.8$: projected trajectory and stroboscopic section over the same finite time interval. These numerical results do not constitute a certified invariant set.}
\end{figure}

For Protocol II, the time series at $\omega=0.2$ contains short bursts of large amplitude separated by time intervals of small amplitude.
The solution
at $\omega=0.8$ is more regular
and has a much smaller positive exponent estimate in the frequency scan. A longer simulation at $\omega=0.2$ gives $230{,}001$ sampled states over $1{,}150$ time units and $36$ stroboscopic points. The small number of forcing periods limits the conclusions that can be drawn from this section.

\begin{figure}[t]
\centering
\begin{subfigure}[t]{.94\textwidth}
\centering
\includegraphics[width=\textwidth]{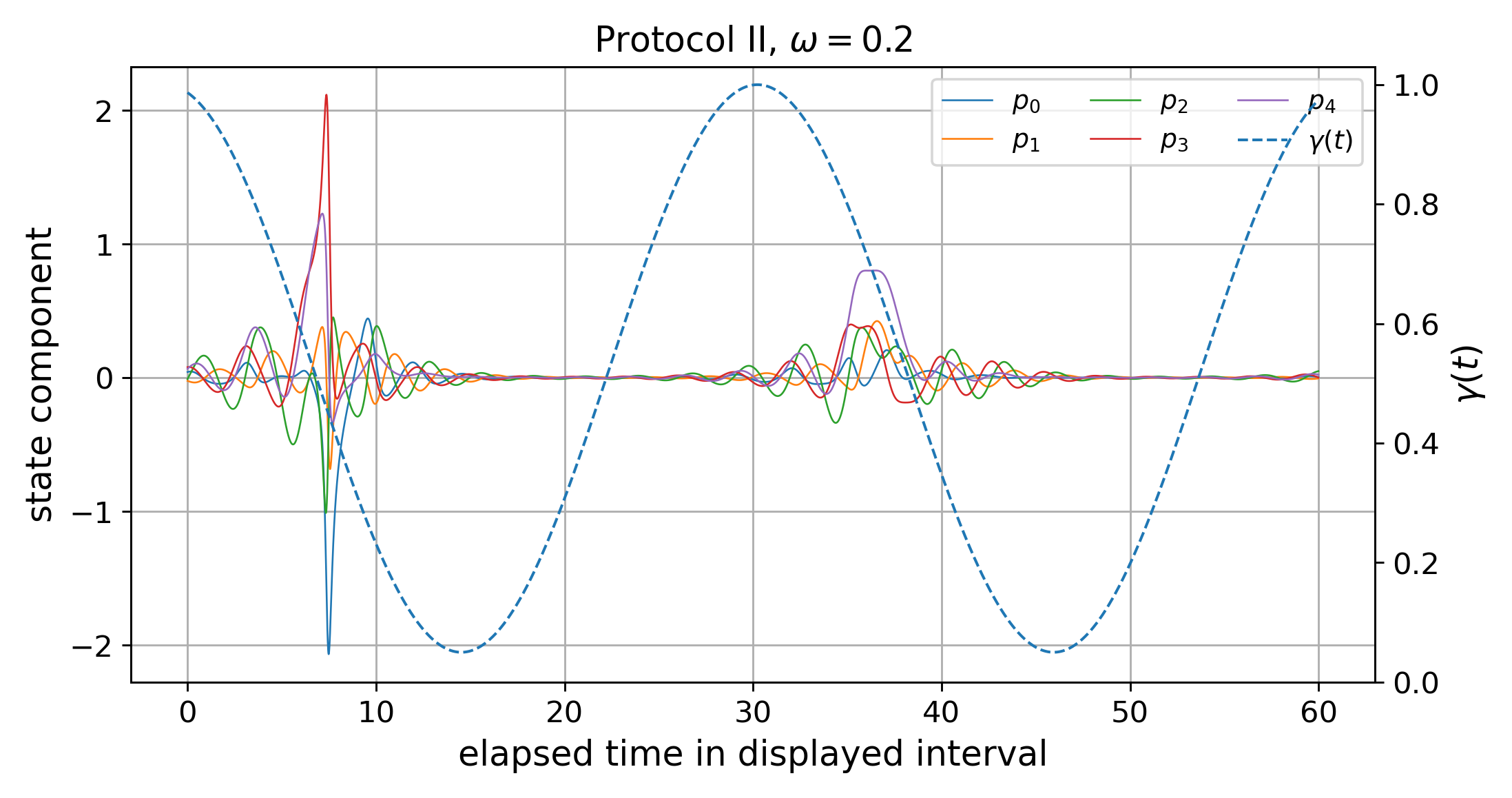}
\caption{Protocol II at $\omega=0.2$: intermittent bursts.}
\label{fig:protocol-II-02}
\end{subfigure}

\medskip
\begin{subfigure}[t]{.94\textwidth}
\centering
\includegraphics[width=\textwidth]{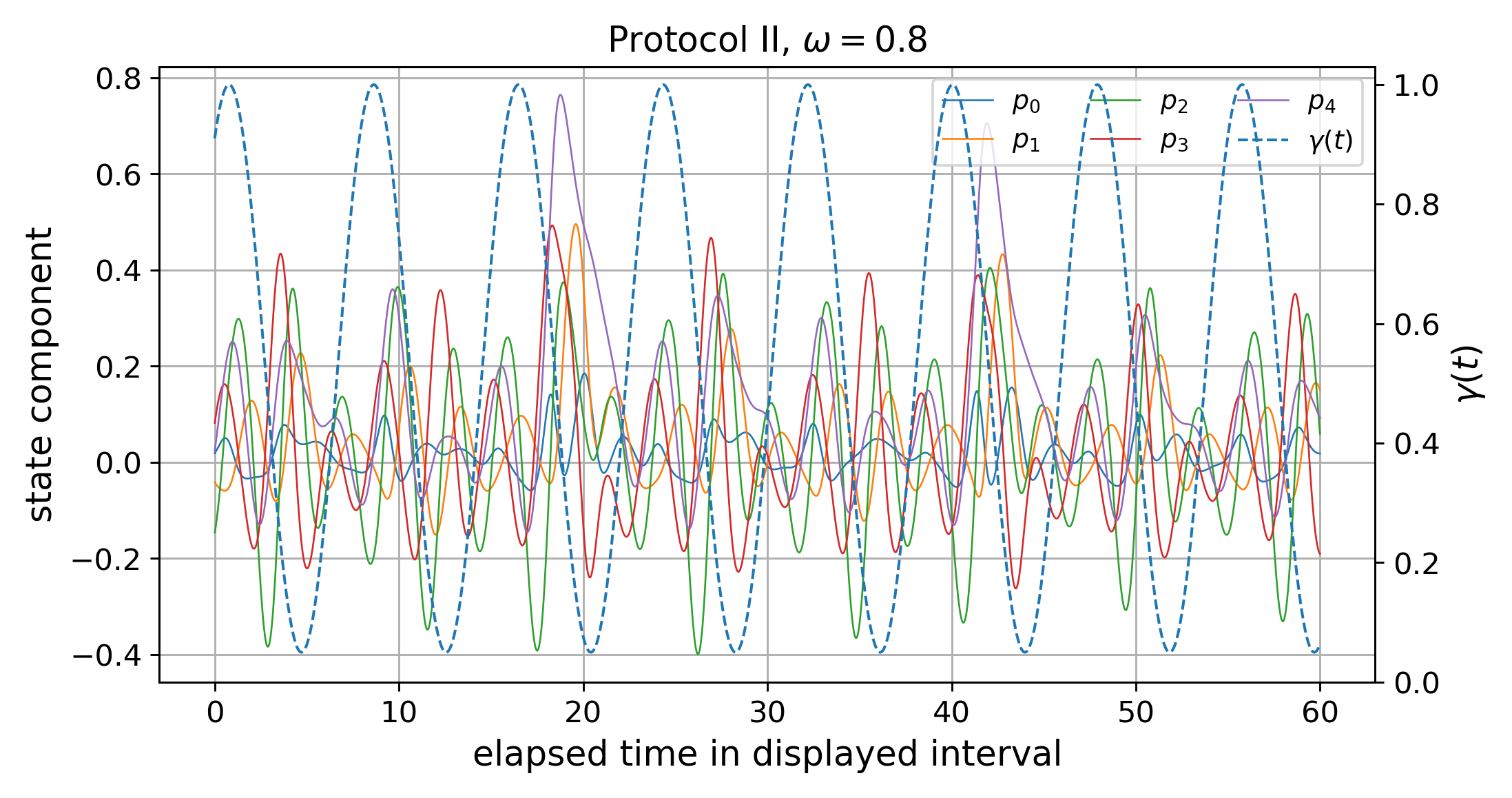}
\caption{Protocol II at $\omega=0.8$.}
\label{fig:protocol-II-08}
\end{subfigure}
\caption{Protocol II time series with the forcing $\gamma(t)$ shown on the right-hand axis. The curves show the actual state coefficients; the fifth component is $p_4$, not the reference coordinate $q_4$.}
\end{figure}

\begin{figure}[t]
\centering
\begin{subfigure}[t]{.48\textwidth}
\centering
\includegraphics[width=\textwidth]{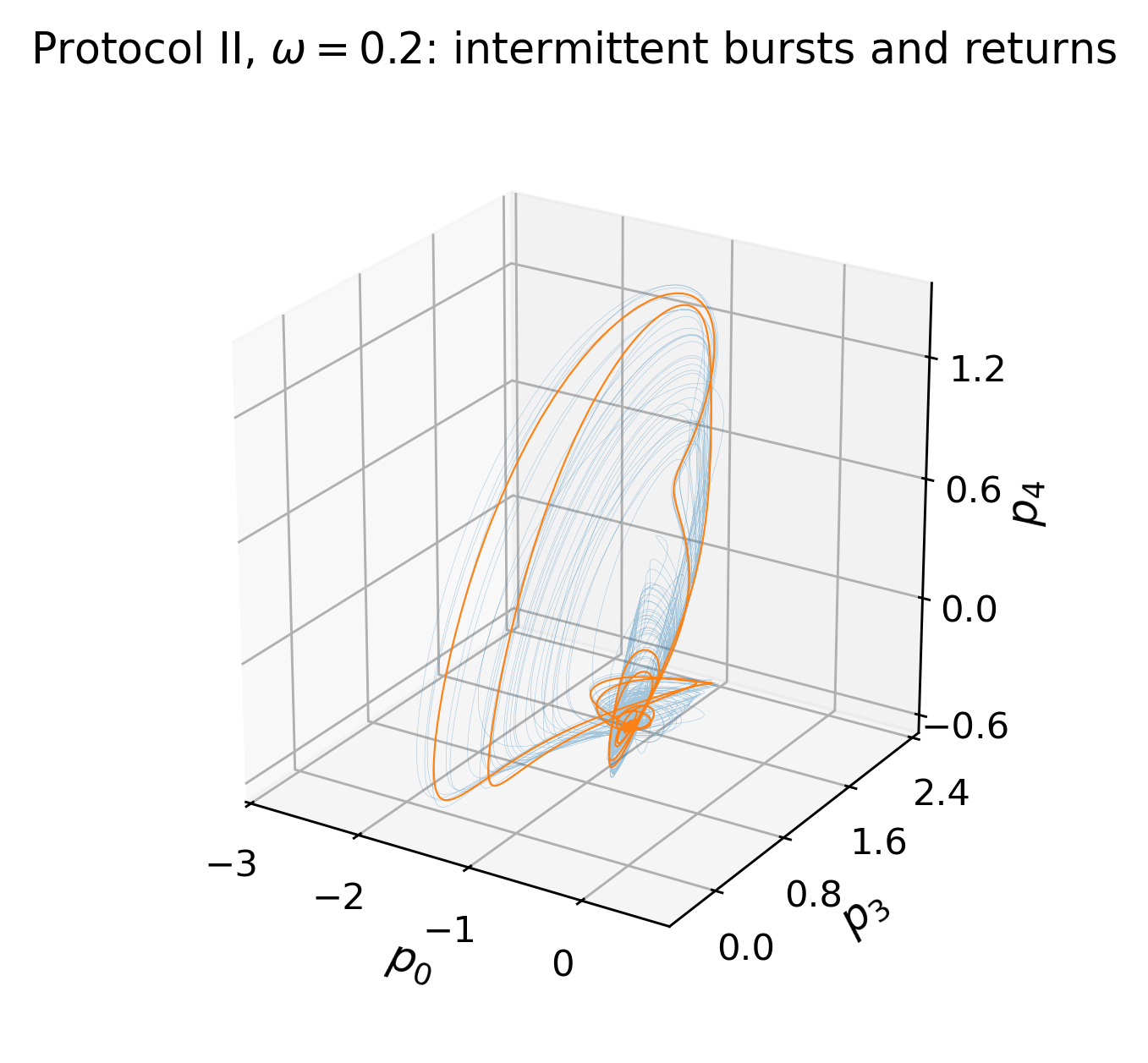}
\caption{Projected trajectory showing intermittent bursts and returns.}
\label{fig:protocol-II-dense-trajectory}
\end{subfigure}
\hfill
\begin{subfigure}[t]{.48\textwidth}
\centering
\includegraphics[width=\textwidth]{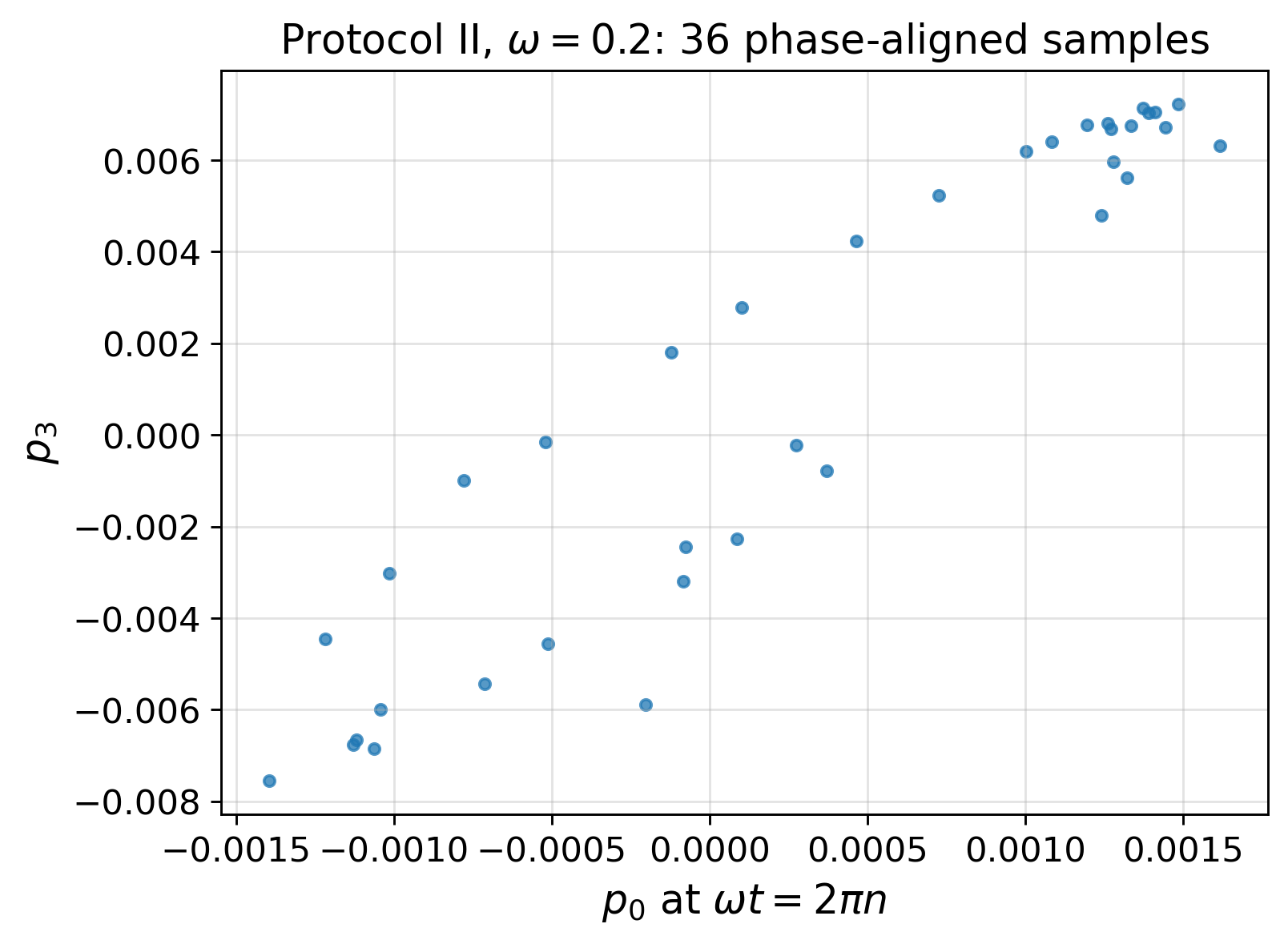}
\caption{Stroboscopic section with 36 points.}
\label{fig:protocol-II-stroboscopic}
\end{subfigure}
\caption{Protocol II at $\omega=0.2$: projected trajectory and stroboscopic section over the same time interval.}
\end{figure}

{\bf A periodic solution for Protocol II at $\omega=2$.}
The fixed-step integration
failed during a large-amplitude transient. Independent adaptive integrations to $t=1200$, using both actual and reference coordinates, did not reproduce this failure. All three tested combinations of coordinate representation and tolerance reached the final time and approached the same solution for large $t$.
Their transient maxima differed, so these calculations do not establish the maximal amplitude
or global boundedness for all initial conditions.

At $\omega=2$,
the forcing period is $T_f=\pi$. Integrating the state and variational equation over one further forcing period gives
\begin{equation}
 \|p(1200+T_f)-p(1200)\|_2\approx3.5\times10^{-13},
 \qquad \rho(\Phi(T_f))\approx0.454821<1
 \label{eq:protocol-II-periodic-check}
\end{equation}
where $\rho$ denotes the spectral radius. These values
provide numerical evidence of an attracting $T_f$-periodic solution
for the stated initial-value problem. The Floquet-derived value $\log(\rho)/T_f\approx-0.250781$ is reported separately from the
estimates in \cref{tab:protocol-lle}, because the two calculations use different methods.
The fixed-step integration failure therefore does not
indicate that solutions become unbounded.

\begin{figure}[t]
\centering
\begin{subfigure}[t]{.48\textwidth}
\centering
\includegraphics[width=\textwidth]{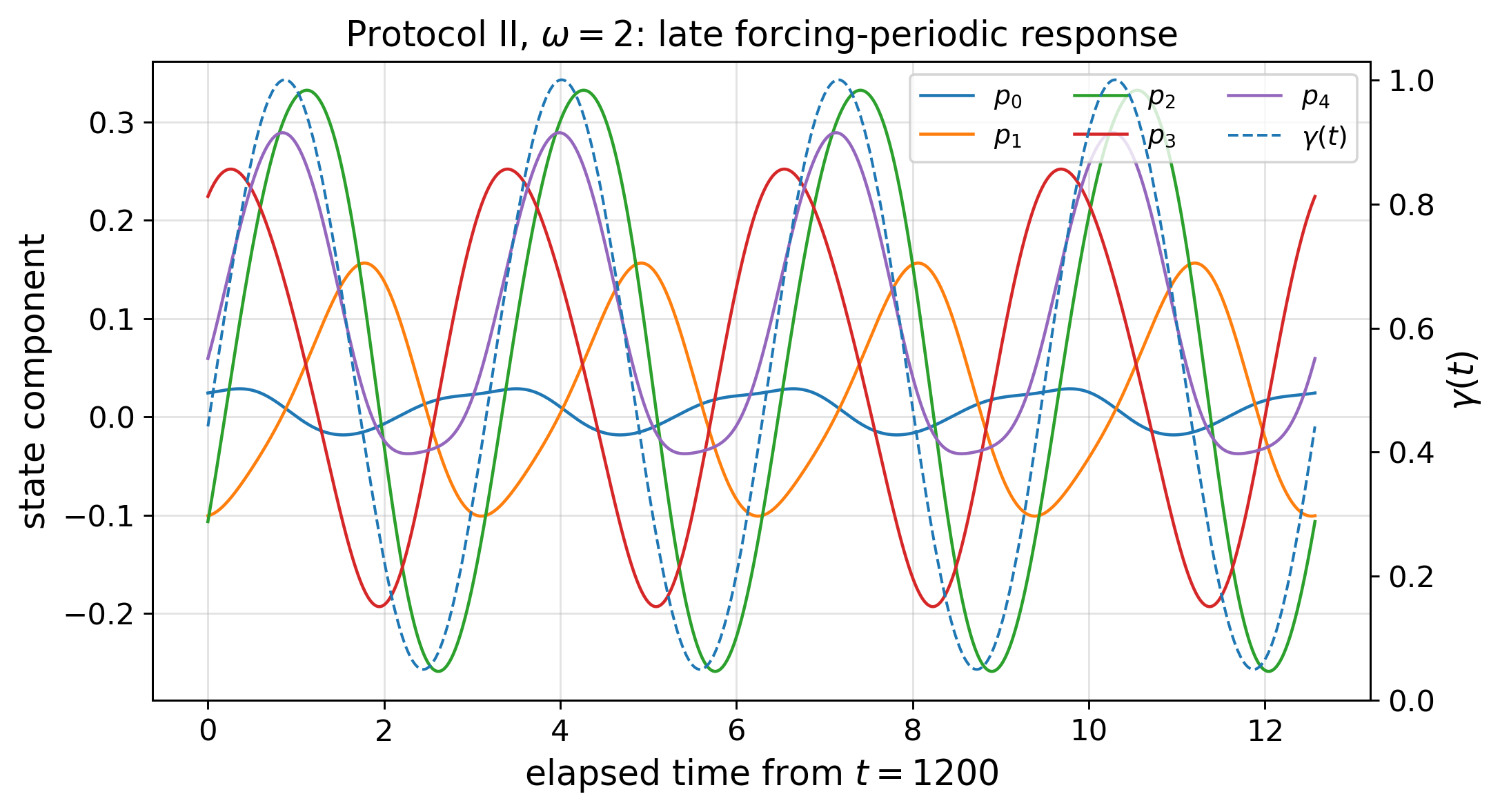}
\caption{The solution over four periods of $\gamma(t)$ at large $t$, with $\gamma(t)$ on the right-hand axis.
}
\label{fig:protocol-II-periodic-response}
\end{subfigure}
\hfill
\begin{subfigure}[t]{.48\textwidth}
\centering
\includegraphics[width=\textwidth]{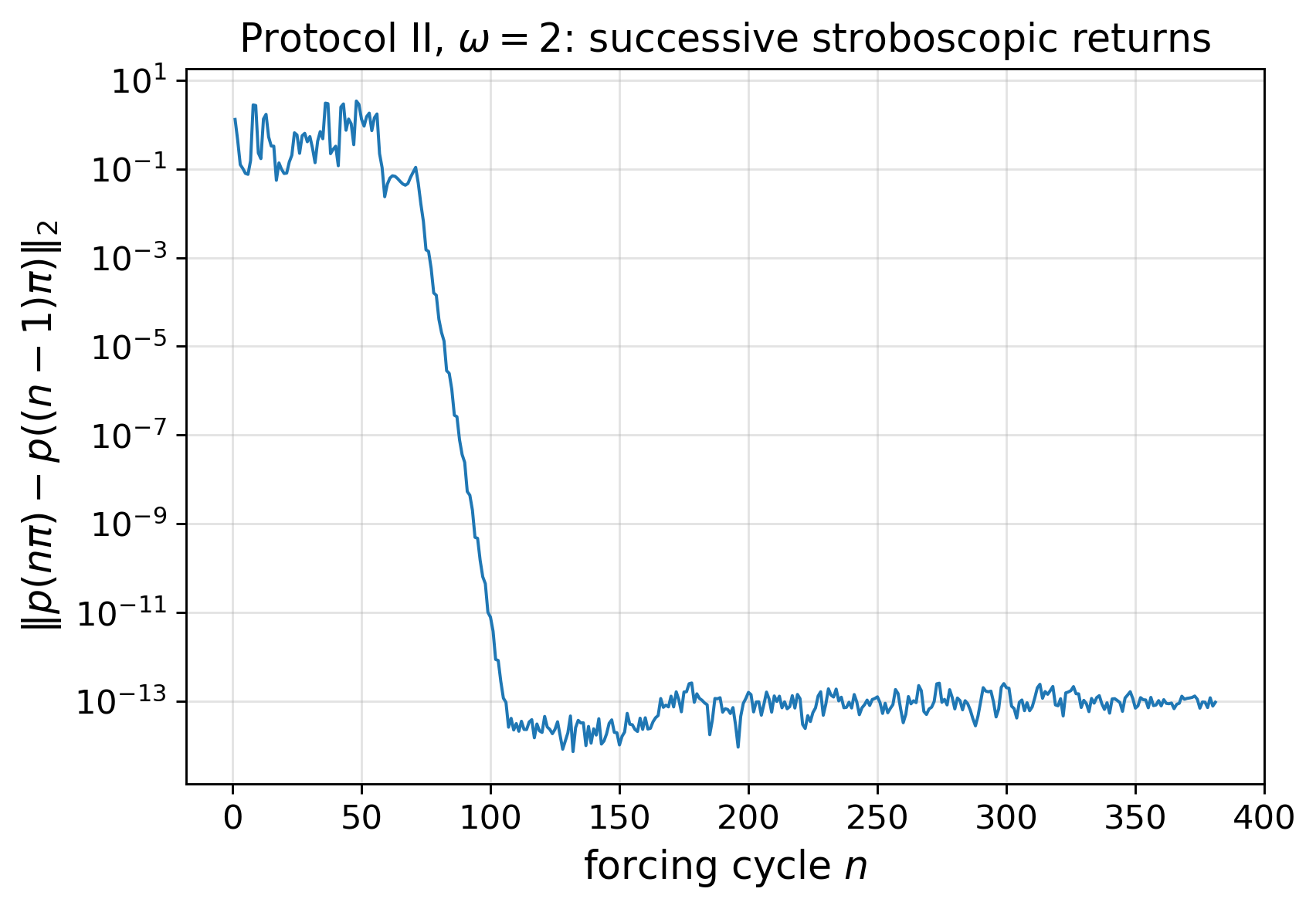}
\caption{$\|p(t+T_f)-p(t)\|$ at $\omega t\equiv0 \pmod{2\pi}$
}
\label{fig:protocol-II-return-residual}
\end{subfigure}
\caption{Protocol~II at $\omega=2$: adaptive integration indicates convergence to a $T_f$-periodic solution. The
residual in (B) eventually reaches round-off level; neither the transient nor this test is rigorously verified.
}
\end{figure}

The two protocols use different ranges of $\gamma$.
Protocol I repeatedly traverses the interval $[0.60,0.76]$,
whereas Protocol II spans nearly  all of $[0,1]$.
Their finite-time exponent estimates and component time series show different responses to the prescribed variation of $\gamma$.
Adaptive integration and the stroboscopic map
support an attracting $T_f$-periodic solution  r
for Protocol II at $\omega=2$. The other near-zero and low-frequency cases require longer convergence studies before their asymptotic behaviour can be classified.

\begin{remark}
The characteristic polynomial, uniqueness of $\gamma_H$, and transversality of the eigenvalue crossing are established analytically; the sign of the first Lyapunov coefficient is computed numerically
(Appendix~\ref{app:riccati-validation}). The first Lyapunov coefficient, periodic branches, estimated period-doubling parameters, coexisting cycles, finite-time spectra, and  solutions for
periodically varying $\gamma$
are determined numerically. Independent reintegration supports the selected periodic orbits and the  solution
at $\omega=2$, but does not certify all branches or
all frequencies.
The numerical methods, parameter settings and scope of these checks are given in Appendix~\ref{app:riccati-validation} and the accompanying code.
\end{remark}

\null
\FloatBarrier
\appendix
\section{Coordinate product, multiplication matrices and the Riccati components}
\label{app:coordinate-matrices}

Throughout this appendix,
$\SwitchAlg_\gamma=\mathcal A_\gamma(1,1,1,1)$,
$\Pent=\SwitchAlg_1$,
$\Vpure=V_\gamma=\Span_\R\{i,j,k,\ell\}$, $\kg=\frac{1}{\sqrt{1+\gamma^2}},$ and $\sg=\frac{\gamma}{\sqrt{1+\gamma^2}}$.
 When $\gamma$ is clear from the context,
we  use juxtaposition for $\star_\gamma$. Let
\[
 p=p_0+p_1i+p_2j+p_3k+p_4\ell,
 \qquad
 q=q_0+q_1i+q_2j+q_3k+q_4\ell.
\]
Then
\[
 p\star_\gamma q=C_0+C_1i+C_2j+C_3k+C_4\ell,
\]
where
\begin{align}
C_0={}&p_0q_0-p_1q_1-p_2q_2-p_3q_3-p_4q_4,\notag\\
C_1={}&p_0q_1+q_0p_1+\kg(p_2q_3-p_3q_2)
 +\frac{-p_2q_4-p_3q_4+p_4q_2+p_4q_3}{\sqrt2},\notag\\
C_2={}&p_0q_2+q_0p_2+\kg(-p_1q_3+p_3q_1)
 +\frac{-p_1q_4-p_3q_4+p_4q_1+p_4q_3}{\sqrt2},\notag\\
C_3={}&p_0q_3+q_0p_3+\kg(p_1q_2-p_2q_1)
 +\frac{-p_1q_4-p_2q_4+p_4q_1+p_4q_2}{\sqrt2},\notag\\
C_4={}&p_0q_4+q_0p_4
 +\sg(p_1q_2-p_1q_3-p_2q_1+p_2q_3+p_3q_1-p_3q_2).
\label{eq:complete-coordinate-product}
\end{align}
The coordinates $p_4$ and $q_4$ in
\cref{eq:complete-coordinate-product} are the actual coefficients
of $\ell$ in the canonical basis. If a state is specified as
$p=S_\gamma r$ using \cref{eq:weighted-state-main}, then
$p_4=\gamma r_4$, where $r_4$ is the coefficient of $\ell$ in $r$.
The map $S_\gamma$ is not part of the multiplication
$\star_\gamma$.

With respect to the canonical basis $1,i,j,k,\ell$, left multiplication is given by
\begin{equation}
L_p^{\gamma}=
\begin{pmatrix}
p_0&-p_1&-p_2&-p_3&-p_4\\
p_1&p_0&-\kg p_3+p_4/\sqrt2&\kg p_2+p_4/\sqrt2&-(p_2+p_3)/\sqrt2\\
p_2&\kg p_3+p_4/\sqrt2&p_0&-\kg p_1+p_4/\sqrt2&-(p_1+p_3)/\sqrt2\\
p_3&-\kg p_2+p_4/\sqrt2&\kg p_1+p_4/\sqrt2&p_0&-(p_1+p_2)/\sqrt2\\
p_4&\sg(-p_2+p_3)&\sg(p_1-p_3)&\sg(-p_1+p_2)&p_0
\end{pmatrix}.
\label{eq:left-multiplication-matrix}
\end{equation}
If $C=\diag(1,-1,-1,-1,-1)$, the anti-automorphic conjugation gives
\begin{equation}
R_p^{\gamma}=C L_{\bar p}^{\gamma}C.
\label{eq:right-multiplication-matrix}
\end{equation}

\subsection{Complete real Riccati system}

Write $p=w+xi+yj+zk+u\ell$, where $u$ is the actual
coefficient of $\ell$, and set
$\kappa=(1+\gamma^2)^{-1/2}$. Substituting the coefficients
$A_\gamma$ and $B_\gamma$ from
\cref{eq:A-gamma-riccati,eq:B-gamma-riccati}
into \cref{eq:riccati-main}, and using
\cref{eq:complete-coordinate-product}, gives
\begingroup\footnotesize
\begin{align}
\dot w={}&w^2-x^2-y^2-z^2-u^2
+w\left(\gamma u-4y-\frac12\right)
+2x-\frac12y+z+\frac\gamma2u,\label{eq:riccati-component-w}\\
\dot x={}&2wx-2w-\frac12x+4\kappa wz+\kappa y+\frac\kappa2z
-2\sqrt2uw+\frac{\sqrt2}{4}u
-\frac{\sqrt2\gamma}{2}w(y+z)
-\frac{\sqrt2\gamma}{4}(y+z),\label{eq:riccati-component-x}\\
\dot y={}&2w^2+2wy+\frac12w-2x^2-2y^2-\frac12y-2z^2-2u^2
+\frac{3\sqrt2}{2}u
-\frac{\sqrt2\gamma}{2}w(x+z)
-\frac{\sqrt2\gamma}{4}(x+z)
-\kappa x+2\kappa z,\label{eq:riccati-component-y}\\
\dot z={}&2wz-w-\frac12z-4\kappa wx-\frac\kappa2x-2\kappa y
-2\sqrt2uw+\frac{3\sqrt2}{4}u
-\frac{\sqrt2\gamma}{2}w(x+y)
-\frac{\sqrt2\gamma}{4}(x+y),\label{eq:riccati-component-z}\\
\dot u={}&2uw-\frac12u
+\frac{\gamma}{2}\left(u^2-w^2+x^2+y^2+z^2-w\right)\notag\\
&+\gamma\kappa\left(-4wx+4wz-\frac32x-y+\frac52z\right).
\label{eq:riccati-component-u}
\end{align}
\endgroup
The initial value is
\[
(w,x,y,z,u)(0)=\frac12(1,-1,-1,1,-\gamma).
\]
At $\gamma=0$, the final equation reduces to $\dot u=(2w-1/2)u$, so $u(0)=0$ makes the quaternion subalgebra $\mathbb H$ invariant.

\section{Numerical methods and display protocols}
\label{app:numerics}

Fixed-step calculations use fourth-order Runge--Kutta integration;
independent adaptive checks use  DOP853.
Lyapunov exponents are computed by
either nearby-trajectory rescaling or
integration of the variational equations with orthogonalisation,
as specified below. Time series of the coordinates are shown on late time intervals.
For the available code and supporting
numerical results, see Appendix~\ref{app:reproducibility}.

{\bf Riccati equation with fixed $\gamma$.
} The largest Lyapunov exponent
at $201$ values of $\gamma$
is computed using two nearby trajectories
with initial separation $10^{-8}$, rescaling every $0.1$ time unit,
integration step $\Delta t=0.01$, a transient of $300$ time units,
and an averaging time
of $500$. The
local maxima of $p_3$
are computed for
$551$ values of $\gamma$ on $[0.35,0.90]$, with
integration step $0.01$, a transient of $1000$ time units, and
$500$ retained time units; at most the last $150$ maxima are
plotted at each value of $\gamma$.

All five Lyapunov exponents at selected values of $\gamma$
are computed separately using the
analytic variational equations with step $0.005$,
orthogonalisation every $0.1$ time unit, a transient of $1000$,
and an averaging time
of $2000$.
The plotted trajectories use integration and output spacing
$0.01$, retaining $t\in[2500,4500]$ at $\gamma=0.6$ and
$t\in[1900,2500]$ at $\gamma=0.7$.

Periodic orbits are followed by shooting with a phase condition
and simultaneous variational integration. The Floquet multiplier $1$
is selected by alignment of its eigenvector with the
flow direction; stability uses the remaining four multipliers.
Continuation uses steps in $\gamma$.
 If the Newton corrector fails or the continuation stops, this
does not establish a fold or an isola.
Period-doubling parameters are estimated by secant
interpolation between adjacent sampled multipliers across $-1$;
the sample separation is reported, but no rigorous interpolation
error is assigned.

{\bf Riccati equation with periodic $\gamma(t)$.}
The  computations
use two nearby trajectories in the reference
coordinates, initial separation $10^{-8}$, rescaling every $0.1$
time unit, integration step $0.005$, a transient of $400$, and an averaging time
of $800$ time units.
Forcing phase is prescribed and is not perturbed.
These fixed intervals contain different numbers of forcing
periods. At $\omega=0.01$,  the averaging
time
spans only $1.27$ periods
and does not establish converged long-time behaviour.
The fixed-step integration for Protocol~II at $\omega=2$ failed; it
supplies no exponent estimate and is omitted from the exponent plot; its adaptive
reintegration is described below.

The  time series
at $\omega=0.2,0.8$ use integration step
$0.005$ and output spacing $0.01$, retaining $500$ time units
after a transient of $700$.
The longer Protocol I ($\omega=0.8$) and Protocol II
($\omega=0.2$) records retain $1150$ time units after a transient
of $1200$, with integration and output spacing $0.005$.  The plotted fifth coordinate is
$p_4=\gamma q_4$.

{\bf Further checks for the Riccati equation.}
\label{app:riccati-validation}
The characteristic-polynomial identity is verified by exact
symbolic arithmetic. The first Lyapunov coefficient is computed
separately in floating-point arithmetic using the following
convention.
Write $L=D_pF_{\gamma_H}(0)$ and let $\mathcal B$ be the second
derivative, so $F_{\gamma_H}(x)=Lx+\tfrac12\mathcal B(x,x)$.
With $Lq=i\omega_Hq$, $L^T\psi=-i\omega_H\psi$,
$\bar q^Tq=1$, and $\bar\psi^Tq=1$, the convention is
\[
\begin{aligned}
l_1=\frac{1}{2\omega_H}\operatorname{Re}\bigl\{\bar\psi^T[
&\mathcal B(\bar q,(2i\omega_H I-L)^{-1}\mathcal B(q,q))\\
&-2\mathcal B(q,L^{-1}\mathcal B(q,\bar q))]\bigr\}.
\end{aligned}
\]
There is no cubic derivative term because the field is quadratic.
The computed value is $l_1\approx-0.1832582814$; eigenvector and
adjoint residuals are below $3\times10^{-15}$.

The sign of $l_1$ has not been certified by interval arithmetic.

Seven computed periodic-orbit initial points, including the four
in \cref{tab:riccati-bistability}, are reintegrated together with
their fundamental matrices using DOP853, relative tolerance
$10^{-11}$, absolute tolerance $10^{-13}$, and maximum step
$0.03$.

 For $\gamma=0.6$, the Lyapunov exponents are recomputed by
a separate fourth-order
Runge--Kutta implementation that  integrates the state and full
variational matrix, with twice-applied modified Gram--Schmidt
orthogonalisation every $0.1$ time unit. All runs discard $1000$
time units.
The largest estimates are $0.186409$ for step $0.005$ and averaging time
$2000$, $0.180262$ for step $0.0025$ and averaging time
$2000$, and $0.176563$ for step $0.0025$ and
averaging time
$4000$. Their positive sign persists across these
checks, but the values have not converged to six-digit precision.
The sum of the five finite-time exponents agrees with the time average of the divergence $\operatorname{div}F_\gamma(p)=2\gamma p_4+10p_0-8p_2-\tfrac52$
to within $1.1\times10^{-8}$.

The stroboscopic samples in
\cref{fig:protocol-I-stroboscopic,fig:protocol-II-stroboscopic}
are evaluated at $t_k=2\pi k/\omega$ by DOP853 integration from
the immediately preceding stored  point of the
solution,
over intervals
shorter than $0.005$, with relative and absolute tolerances
$10^{-11}$ and $10^{-13}$.
These samples are derived from the same time series,
not independent long-time integrations.

For Protocol II at $\omega=2$, DOP853 reintegration to $t=1200$
uses maximum step $0.05$. The actual-coordinate runs use relative
and absolute tolerance pairs $(10^{-10},10^{-12})$ and
$(10^{-12},10^{-14})$; the reference-coordinate run uses
$(10^{-11},10^{-13})$.
All three runs reach the final time without triggering the  stopping condition $\|p\|=10^6$,
and their final values of $p$
agree within $4\times10^{-13}$.
The sampled transient maxima differ, approximately $9.89$,
$21.13$ and $9.88$, so these calculations do not establish the maximal amplitude.
A one-period variational check from $t=1200$, with tolerances
$(10^{-12},10^{-14})$ and maximum step $0.02$, gives the  values
and stroboscopic spectral radius reported in
\cref{eq:protocol-II-periodic-check}.

Positive finite-time exponents, bounded time series, Poincaré
sections
and numerical Floquet checks are interpreted together
as numerical evidence. None of them alone is a proof of the existence of an invariant set or of global existence of solutions.

\section{Code and data availability}
\label{app:reproducibility}\label{app:D}

The code and supporting data for Part~II are available in
\path{quaternions-to-pentarnions/part-II/} of the repository
\url{https://github.com/poshtkohi/pentarnion-algebra}.
The repository includes symbolic verification routines, numerical
simulation and continuation programs, figure-generation scripts
and supporting data for reproducing the results.

\section{Declaration of generative AI and AI-assisted technologies in the manuscript preparation process}

The presented results, protocols and diagrams are our own. Claude 5.5 was
used in the last step to assist with proofreading the manuscript and to identify possible gaps in the proofs. We
independently checked all such suggestions, and we take full responsibility for the correctness and
content of the result. We support the Leiden Declaration on Artificial Intelligence and Mathematics.

\providecommand{\bysame}{\leavevmode\hbox to3em{\hrulefill}\thinspace}

\end{document}